\title[Univalent Higher Categories via Complete Semi-Segal Types]{Univalent Higher Categories via Complete Semi-Segal Types}
\author{Paolo Capriotti \and Nicolai Kraus}
\thanks{%
This is the full version of the paper, containing all proofs.
An abridged version titled \emph{Univalent Higher Categories via Complete Semi-Segal Types} will be presented at the 45th ACM SIGPLAN Symposium on Principles of Programming Languages (POPL 2018). \\
Paolo Capriotti is supported by USAF, Airforce office for scientific research, award FA9550-16-1-0029, and Nicolai Kraus by the Engineering and Physical Sciences Research Council (EPSRC), grant reference EP/M016994/1.}
\newcommand{\eqnum}{\leavevmode\hfill\refstepcounter{equation}\textup{\tagform@{\theequation}}}
\newcommand{\fulltetra}[1]{\mathbf\Delta^{#1}}
\newcommand{\match}[1]{\partial{\fulltetra #1}}
\newcommand{\spine}[1]{\mathsf{Sp}^{#1}}
\tikzset{
 tikzdefforeground/.style={
  rectangle,
  rounded corners,
  draw=black, thick,
  minimum height=2em,
  minimum width=5em,
  inner sep=5pt,
  },
 tikzdefbackground/.style={
  rectangle,
  rounded corners,
  draw=black, 
  minimum height=2em,
  minimum width=5em,
  inner sep=5pt,
  },
 tikzpropproject/.style={
  >->>,
  shorten >=0.2cm,
  shorten <=0.2cm, 
  thick,
  },
 tikzprojectforeground/.style={
  ->>,
  shorten >=0.2cm,
  shorten <=0.2cm, 
  line width = 1.1pt,
  preaction={draw, -, line width=5pt, white},
  },
 tikzincludeforeground/.style={
  right hook->,
  shorten >=0.2cm,
  shorten <=0.2cm, 
  line width = 1.1pt,
  preaction={draw, -, line width=5pt, white},
  },
 tikzpropprojectforeground/.style={
  >->>,
  shorten >=0.2cm,
  shorten <=0.2cm, 
  line width = 1.1pt,
  preaction={draw, -, line width=5pt, white},
  },
 tikzincludebackground/.style={
  right hook->,
  shorten >=0.2cm,
  shorten <=0.2cm, 
  line width = 0.5pt,
  },
 tikzprojectbackground/.style={
  ->>,
  shorten >=0.2cm,
  shorten <=0.2cm, 
  line width = 0.5pt,
  },
 tikzpropprojectbackground/.style={
  >->>,
  shorten >=0.2cm,
  shorten <=0.2cm, 
  line width = 0.5pt,
  },
 tikzshortarrow/.style={
  shorten >=0.2cm,
  shorten <=0.2cm, 
  thick,
  },
 tikzequiv/.style={
  <->,
  shorten >=0.2cm,
  shorten <=0.2cm, 
  line width = 0.8pt,
  sloped, 
  above,
  },
 tikzbackgroundequiv/.style={
  <->,
  shorten >=0.2cm,
  shorten <=0.2cm, 
  line width = 0.5pt,
  right,
  },
}
\newcommand{\Ob}{\mathsf{Ob}}
\newcommand{\Hom}{\mathsf{Hom}}
\newcommand{\hcond}{h}
\newcommand{\Comp}{\_ \circ \_}
\newcommand{\Penta}{\pentagon}
\newcommand{\Ids}{\mathsf{Id}}
\newcommand{\isIsoAKS}{\mathsf{isIso}_{\mathsf{AKS}}}
\newcommand{\isIso}{\mathsf{isIso}}
\newcommand{\isneut}{\mathsf{isNeutral}}
\newcommand{\hornfillers}[3]{\Lambda^#2_#3\mbox{-}\mathsf{fillers}(#1)}
\newcommand{\contrhornfilling}[3]{\mathsf{has}\mbox{-}\mathsf{contr}\mbox{-}\Lambda^#2_#3\mbox{-}\mathsf{filling}(#1)}
\NewDocumentCommand\insertComma{>{\SplitList{}}m}{{\ProcessList{#1}{\listthiscomma}}}
\newcommand{\listthiscomma}[1]{#1\let\listthiscomma\listnextcomma}
\newcommand{\listnextcomma}[1]{,{#1}}
\NewDocumentCommand\insertLess{>{\SplitList{}}m}{{\ProcessList{#1}{\listthisless}}}
\newcommand{\listthisless}[1]{#1\let\listthisless\listnextless}
\newcommand{\listnextless}[1]{<{#1}}
\newcommand{\bound}[2][x]{#1_{S | S \subsetneq \{{\insertComma{#2}}\}}}
\newcommand{\boundconcrete}[1]{x_{S | S \subsetneq #1}}
\newcommand{\indexlist}[2]{0 \leq {\insertLess{#2}} \leq {#1}}
\newcommand{\indexlisthorn}[3]{\indexlist{#1}{#2} ; \, {#3} \in \{\insertComma{#2}\}}
\newcommand{\tetratuple}[1]{x_{S | S \subseteq \{{\insertComma{#1}}\}}}
\newcommand{\tetratupleconcrete}[1]{x_{S | S \subseteq \{{#1}\}}}
\newcommand{\horntuple}[3][x]{#1_{S | S \subset \{{\insertComma{#2}}\}; \, \exists p \neq #3. p \not\in S}}
\newcommand{\horntupleconcrete}[2]{x_{S | S \subset \{0,\ldots,#1\}; \, \exists p \neq #2. p \not\in S}}
\renewcommand{\fbox}[1]{#1}
\newcommand{\sm}[1]{\Sigma \left(#1\right),}
\newcommand{\jdeq}{\equiv}      
\newcommand{\defeq}{\vcentcolon\equiv}  
\newcommand{\refl}{\mathsf{refl}}
\newcommand{\ct}{%
  \mathchoice{\mathbin{\raisebox{0.5ex}{$\displaystyle\centerdot$}}}%
             {\mathbin{\raisebox{0.5ex}{$\centerdot$}}}%
             {\mathbin{\raisebox{0.25ex}{$\scriptstyle\,\centerdot\,$}}}%
             {\mathbin{\raisebox{0.1ex}{$\scriptscriptstyle\,\centerdot\,$}}}
}
\newcommand{\UU}{\mathcal{U}}
\newcommand{\N}{\mathbb{N}}
\newcommand{\iscontr}{\mathsf{isContr}}
\newcommand{\isprop}{\mathsf{isProp}}
\newcommand{\isset}{\mathsf{isSet}}
\def\compare#1#2#3#4{\if#1#3\if#2#41\else0\fi\else0\fi}
\newcommand{\istype}[1]{
  \edef\a{\compare-2#1\empty\empty}
  \if\a1 \iscontr \else
  \edef\b{\compare-1#1\empty\empty}
  \if\b1 \isprop \else
  \edef\c{#1}
  \if0\c \isset \else
  \mathsf{is}\mbox{-}{#1}\mbox{-}\mathsf{type} \fi\fi\fi
}
\newcommand{\mapfunc}[1]{\mathsf{ap}_{#1}}
\newcommand{\boundary}{\partial\Delta}
\renewcommand{\C}{\mathbb C}
 \def\boxwidth{1.9cm}
 \def\boxheight{0.55cm}
 \def\boxhigherheight{0.8cm}
\newcommand{\hacktaildoublearrow}{$\rightarrowtail$ \hspace{-16pt} $\twoheadrightarrow$}
\begin{document}

\begin{abstract}
Category theory in homotopy type theory is intricate as categorical laws can only be stated ``up to homotopy'', and thus require coherences.
The established notion of a \emph{univalent category}~\citep{ahrens:rezk} solves this by considering only truncated types,
roughly corresponding to an ordinary category.
This fails to capture many naturally occurring structures, 
stemming from the fact that the naturally occurring structures in homotopy type theory are not ordinary, but rather \emph{higher} categories.

Out of the large variety of approaches to higher category theory that mathematicians have proposed, we believe that, for type theory, the \emph{simplicial} strategy is best suited.
Work by \citet{lurie2009classification} and \citet{harpaz:quasi} motivates the following definition.
Given the first $(n+3)$ levels of a semisimplicial type $S$, we can equip $S$ with three properties: first, contractibility of the types of certain horn fillers; second, a completeness property; and third, a truncation condition.
We call this a \emph{complete semi-Segal $n$-type}.
This is very similar to an earlier suggestion by \citet{schreiber_nlab_inf1cat}.

The definition of a univalent ($1$-) category in~\citep{ahrens:rezk} can easily be extended or restricted to the definition of a \emph{univalent $n$-category} (more precisely, $(n,1)$-category) for $n \in \{0,1,2\}$, and we show that the type of complete semi-Segal $n$-types is equivalent to the type of univalent $n$-categories in these cases.
Thus, we believe that the notion of a complete semi-Segal $n$-type can be taken as the definition of a univalent $n$-category.

We provide a formalisation in the proof assistant Agda using a completely explicit representation of semi-simplicial types for levels up to $4$.
\end{abstract}

\maketitle

\newtheorem{theorem}{Theorem}[section]
\newtheorem{lemma}[theorem]{Lemma}
\newtheorem*{auxlemma*}{Auxiliary Lemma}
\newtheorem{corollary}[theorem]{Corollary}
\theoremstyle{definition}
\newtheorem{definition}[theorem]{Definition}
\newtheorem{example}[theorem]{Example}
\theoremstyle{remark}
\newtheorem{remark}[theorem]{Remark}

\section{Introduction}

The importance of category theory in all of mathematics and computer science can hardly be overestimated: it is a powerful tool that captures important overarching ideas in an elegant, concise framework, and allows one to work in high generality when establishing fundamental basic results. However, in the recently proposed foundational system for mathematics known as \emph{homotopy type theory} (HoTT), see \citep{hott-book}, a variant of Martin-L\"of's intensional type theory based on the observation that types can be viewed as spaces~\citep{awo-war:hott}, the application of categorical notions and ideas has so far proved quite a challenge. The main difficulties are related to the ubiquity of higher dimensional structures in the theory; since weak higher groupoids are in some sense taken as primitive concepts, naturally occurring categorical structures also tend to be higher dimensional, making ordinary category theory of somewhat limited applicability in this setting.

It has to be remarked that ordinary category theory in HoTT is not problematic by itself. In fact, it can be reproduced there quite efficiently, and doing so actually produces some substantial conceptual benefits compared to a traditional set-theoretic presentation; for example, it allows to give a precise form to the idea of ``invariance under equivalence'' that is so prominent in category theory. This has been achieved by \citet{ahrens:rezk}, where the authors adapt the familiar definition of category to the setting of HoTT. Their \emph{univalent categories} capture the same examples and enjoy the same properties as ordinary categories, and the usual results can be reproduced. To make this possible, morphisms of a univalent category are required to have ``homotopically trivial'' higher structure, which in HoTT parlance means that they are \emph{sets}. Consequently, equations about morphisms are really properties, and not additional structure, exactly like in the traditional setting.

Unfortunately --- and unsurprisingly, given the higher dimensional nature of HoTT --- univalent categories fail to capure many of the common examples that occur naturally when working within the system, egregiously including the universe of types $\UU$: if we take morphisms to be simply functions, then morphisms do not form sets, as function types can have as much higher dimensional structure as their codomain type. Since we have to give up the idea that the algebraic laws of a category should be mere properties, they have to be turned into further structure, which is itself subject to laws, and so on, ad infinitum. Therefore we naturally find ourselves in need of a notion of \emph{higher category}.

The idea of higher categories is of course not new to HoTT. However, the existing definitions and frameworks have proved quite hard to translate into HoTT, for essentially the same reasons that higher categories are so unavoidable in the first place: the basic building blocks of the theory are already equipped with higher dimensional categorical structure. In fact, all of the established approaches for dealing with the combinatorics of higher categories exploit, in one way or another, the fact that ultimately every mathematical object can be built out of sets, and those have no higher structure themselves. This assumption is certainly not validated by HoTT, since types such as the universe are not assumed to be in any way constructible using sets only.
One version of higher categories was suggested by \citet{cranch:concrete}, however their \emph{concrete categories} do not lead to a precise definition but rather to a collection of naturally occurring examples such as, again, type universes.

To appreciate the difficulties involved in expressing higher categorical notions in HoTT, it is important to understand the vast amount of combinatorial complexity arising from their higher dimensional structure, even forgetting about type theory for just a moment. Let us informally think of an $n$-category as a structure equipped with objects, morphisms between objects, 2-morphisms between morphisms, 3-morphisms between 2-morphisms, up to some level $n$, which could be infinity. Just like an ordinary category, we want an operation that allows us to compose morphisms, but this time we want the composition operation to only be associative up to an ``invertible'' 2-morphism. This means that we turned associativity from a \emph{mere property} of the composition operation into a \emph{structure} that returns a 2-morphism of a specific type given three composable morphisms as input. A similar process applies to identities and their laws, and to all forms of compositions and identities at every level. And this is not quite enough, since now all the laws that we turned into structure need laws of their own, which are referred to as \emph{higher coherences}. Of course, higher coherences require further higher coherences, until we reach the highest possible level $n$, at which point we revert back to laws. If $n$ is actually infinity, this process never ends, leaving us with infinitely many levels of coherence to manage.

One might think about suppressing coherences, but then the resulting structures are ill-behaved in subtle ways, and certain constructions (the simplest example of which being that of a slice category) will simply fail to work. Trying to make this process precise exactly in the form described above is easily seen to be unfeasible. The coherence properties one has to come up with to ensure that the overall notion is well-behaved do not follow any immediately apparent pattern, and their complexity grows fast enough to make a direct definition $n$-categories completely unworkable for $n$ as low as $4$. The solution is to organise this enormous amount of data in clever ways as to create simpler patterns that can then be more easily extended to arbitrary $n$ or to infinity.
Various strategies have been explored and lead to general definitions of higher categories, often not yet known to be equivalent.  They include opetopic~\citep{bae-dol:opetopes}, type theoretic~\citep{finsterMimram_weakCats}, operadic~\citep{batanin:omega-cat, leinster:higher}, simplicial~\citep{street:orientals, verity:complicial}, multisimplicial~\citep{tamsamani:weak-cat}, and cellular~\citep{joyal:disks} approaches.

The higher categories we are particularly interested in are the so-called $(n,1)$-categories,
for which an explicit survey has been given by \citet{Bergner2010}.
These are structures where all $k$-morphisms are invertible for $k > 1$. The reason for this restriction is that we can reuse the native higher structure of types given by equality (which is indeed invertible), and therefore only focus on objects, the first level of morphisms, and their composition, identities and coherences. Nevertheless, we still have a lot of data to manage, and for arbitrary or infinite $n$, it is still an open problem to assemble it in such a way as to make it possible to express the entire tower of coherences completely internally, at least without either restricting types to be sets, or extending the theory somehow --- cf.\ HTS \citep{voe:hts}, two-level type theory \citep{alt-cap-kra:two-level, ann-cap-kra:two-level}, and FOLDS \citep{dim:folds}.

Nevertheless, we can take inspiration from certain set-based models, in particular those based on \emph{simplicial sets} and \emph{simplicial spaces}, and obtain, for each fixed externally chosen natural number $n$, a notion of $(n,1)$-category which can be stated internally. A \emph{simplicial set} is an abstract way to describe a configuration of points, lines between these points, triangles, tetrahedra, and so on.
They come with very intuitive structure, namely so-called \emph{face maps} and \emph{degeneracy maps}.
The former allow us to get faces: given a tetrahedron, we can get any of the triangles in its ``boundary'', from a line, we can get its endpoints, and so on.
The degeneracy maps allow us to view a point as a trivial line, a line as a trivial triangle (where one of the two endpoints is duplicated), and so on.
Face and degeneracy maps fulfil intuitive laws, e.g.\ degenerating a triangle and taking one particular face leads to the original triangle.
Simplicial \emph{spaces}, which are just like simplicial sets, but where sets have been replaced by some notion of ``spaces''\footnote{Confusingly, these ``spaces'' are usually taken to be simplicial sets.}, can then be equipped with certain conditions that turn out to encode higher categorical structure in an elegant way. This is the basic idea underlying \emph{complete Segal spaces}, one of the existing formulations of $(\infty,1)$-categories.

To adapt the formalism of Segal spaces to HoTT, we would like to simply replace spaces with general types. This is however not a straightforward process, since the conditions on face and degeneracy maps are usually assumed to hold \emph{strictly} (i.e.\ up to equality and not up to homotopy), and there is no way to express this directly in HoTT, as the only equality the theory has access to corresponds to homotopy. As it turns out, using ideas from the theory of \emph{Reedy fibrant diagrams} over inverse categories~\citep{reedy:model-categories,shulman:inverse-diagrams}, we can encode those conditions implicitly using dependent types, but we have to give up degeneracies to make this possible. These structures are called \emph{semisimplicial types}. A priori, taking degeneracies out cripples the resulting object irreparably, since degeneracies are used to encode the identity part of a categorical structure. However, \citet{lurie2009classification} and \citet{harpaz:quasi} have observed that, under an appropriate assumption called \emph{completeness}, a ``weak'' degeneracy structure can actually be recovered a posteriori.

For some natural number $n$, the notion of Segal space can then be emulated in HoTT by starting with a semisimplicial type restricted to $(n+2)$ levels, and requiring the type of lines to be an $(n-1)$-type.
We call this structure a \emph{complete semi-Segal $n$-type}, and the idea is that it defines a \emph{univalent $n$-category}.
A definition similar to ours has earlier and independently (but also motivated by \citet{harpaz:quasi}) been suggested by Schreiber on the nLab~\citep{schreiber_nlab_inf1cat}, where it is simply called an \emph{$(n,1)$-category}; see Remark~\ref{rem:flexible-main-definition} for variations.

At this point, a problem becomes visible: what does a definition of higher categories have to satisfy in order to be considered ``correct''?
Since no notion of higher category exists so far in HoTT, there is nothing we can compare it to.
The only established notion is the one of a univalent category by \citet{ahrens:rezk}, 
and it is easy enough to generalise it to the definition of a \emph{univalent $2$-category}, and straightforward to simplify it to get the notion of a \emph{poset}.
What we can thus do is comparing 
\begin{itemize}
 \item \emph{complete semi-Segal sets} with \emph{posets},
 \item \emph{complete semi-Segal $1$-types} with \emph{univalent categories}, and
 \item \emph{complete semi-Segal $2$-types} with \emph{univalent 2-categories}.
\end{itemize}
We construct equivalences for each pair, in a modular way, such that each equivalence is a direct extension of the previous one.

\begin{figure}[htbp]
\begin{tikzpicture}[x= 1cm , y = -1cm]

 \newcommand{\fixedparbox}[1]{\parbox[][\boxheight][c]{\boxwidth}{\centering #1}}

 \pgfdeclarelayer{back}    
 \pgfsetlayers{back,main}  

 \foreach \x in {0,1,2,3}{
  \foreach \y in {0,1,2,3}{
   \foreach \z in {0,1}{
    \coordinate (A\x\y\z) at (3.3*\x+1.6*\z, 3.5*\y - 1.7*\z); 
 }}}


 

\newcommand{\textforeground}[1]{\textbf{#1}}

 
 \node[tikzdefforeground] (B000) at (A000) 
 {\fixedparbox{
   \textforeground{1-restr sSt} 
 }};

 \node[tikzdefforeground] (B010) at (A010) 
 {\fixedparbox{
   \textforeground{2-restr sSt} 
 }};

 \node[tikzdefforeground] (B020) at (A020) 
 {\fixedparbox{
   \textforeground{3-restr sSt} 
 }};

 \node[tikzdefforeground] (B030) at (A030) 
 {\fixedparbox{
   \textforeground{4-restr sSt} 
 }};

 
 \node[tikzdefforeground] (B110) at (A110) 
 {\fixedparbox{
   \textforeground{2-restr sSt with deg} 
 }};

 \node[tikzdefforeground] (B120) at (A120) 
 {\fixedparbox{
   \textforeground{3-restr sSt with deg} 
 }};

 \node[tikzdefforeground] (B130) at (A130) 
 {\fixedparbox{
   \textforeground{4-restr sSt with deg} 
 }};

 
 \node[tikzdefforeground] (B220) at (A220) 
 {\fixedparbox{
   \textforeground{3-restr univ sSt} 
 }};

 \node[tikzdefforeground] (B230) at (A230) 
 {\fixedparbox{
   \textforeground{4-restr univ sSt} 
 }};

 
 \node[tikzdefforeground] (B310) at (A310) 
 {\fixedparbox{
   \textforeground{univ sS set} 
 }};

 \node[tikzdefforeground] (B320) at (A320) 
 {\fixedparbox{
   \textforeground{univ sS 1-type} 
 }};

 \node[tikzdefforeground] (B330) at (A330) 
 {\fixedparbox{
   \textforeground{univ sS 2-type} 
 }};


\newcommand{\textbackground}[1]{#1}
 
 
 \node[tikzdefbackground] (B001) at (A001) 
 {\fixedparbox{
   \textbackground{graph} 
 }};

 \node[tikzdefbackground] (B011) at (A011) 
 {\fixedparbox{
   \textbackground{trans graph} 
 }};

 \node[tikzdefbackground] (B021) at (A021) 
 {\fixedparbox{
   \textbackground{wild semicat} 
 }};

 \node[tikzdefbackground] (B031) at (A031) 
 {\fixedparbox{
   \textbackground{wild 2-semicat} 
 }};

 
 \node[tikzdefbackground] (B111) at (A111) 
 {\fixedparbox{
   \textbackground{refl-trans graph} 
 }};

 \node[tikzdefbackground] (B121) at (A121) 
 {\fixedparbox{
   \textbackground{wild precat} 
 }};

 \node[tikzdefbackground] (B131) at (A131) 
 {\fixedparbox{
   \textbackground{wild 2-precat} 
 }};

 
 \node[tikzdefbackground] (B221) at (A221) 
 {\fixedparbox{
   \textbackground{wild category} 
 }};

 \node[tikzdefbackground] (B231) at (A231) 
 {\fixedparbox{
   \textbackground{wild univ 2-category} 
 }};
 
 
 \node[tikzdefbackground] (B311) at (A311) 
 {\fixedparbox{
   \textbackground{poset} 
 }};

 \node[tikzdefbackground] (B321) at (A321) 
 {\fixedparbox{
   \textbackground{univalent category} 
 }};

 \node[tikzdefbackground] (B331) at (A331) 
 {\fixedparbox{
   \textbackground{univalent 2-category} 
 }};
 

 
 \draw[tikzprojectforeground] (B010) to node {} (B000);
 \draw[tikzprojectforeground] (B020) to node {} (B010);
 \draw[tikzprojectforeground] (B030) to node {} (B020);
 
 \draw[tikzprojectforeground] (B120) to node {} (B110);
 \draw[tikzprojectforeground] (B130) to node {} (B120);
 
 \draw[tikzprojectforeground] (B230) to node {} (B220);
 
 \draw[tikzincludeforeground] (B310) to node {} (B320);
 \draw[tikzincludeforeground] (B320) to node {} (B330);
 
 
 \draw[tikzprojectforeground] (B110) to node {} (B010);
 \draw[tikzprojectforeground] (B120) to node {} (B020);
 \draw[tikzprojectforeground] (B130) to node {} (B030);
 
 \draw[tikzpropprojectforeground] (B220) to node {} (B120);
 \draw[tikzpropprojectforeground] (B230) to node {} (B130);
 
 \draw[tikzpropprojectforeground] (B310) to node {} (B110);
 
 \draw[tikzpropprojectforeground] (B320) to node {} (B220);
 \draw[tikzpropprojectforeground] (B330) to node {} (B230);

 \begin{pgfonlayer}{back}

 
 \draw[tikzequiv] (B000) to node {$\sim$} (B001);
 \draw[tikzequiv] (B010) to node {$\sim$} (B011);
 \draw[tikzequiv] (B020) to node {$\sim$} (B021);
 \draw[tikzequiv] (B030) to node {$\sim$} (B031);

 \draw[tikzequiv] (B220) to node {$\sim$} (B221);
 \draw[tikzequiv] (B230) to node {$\sim$} (B231);

 \draw[tikzequiv] (B110) to node {$\sim$} (B111);
 \draw[tikzequiv] (B120) to node {$\sim$} (B121);
 \draw[tikzequiv] (B130) to node {$\sim$} (B131);

 \draw[tikzequiv] (B310) to node {$\sim$} (B311);
 \draw[tikzequiv] (B320) to node {$\sim$} (B321);
 \draw[tikzequiv] (B330) to node {$\sim$} (B331);

 
 \draw[tikzprojectbackground] (B111) to node {} (B011);
 \draw[tikzprojectbackground] (B121) to node {} (B021);
 \draw[tikzprojectbackground] (B131) to node {} (B031);
 
 \draw[tikzpropprojectbackground] (B221) to node {} (B121);
 \draw[tikzpropprojectbackground] (B231) to node {} (B131);

 \draw[tikzpropprojectbackground] (B311) to node {} (B111);
 
 \draw[tikzpropprojectbackground] (B321) to node {} (B221);
 \draw[tikzpropprojectbackground] (B331) to node {} (B231);


 \draw[tikzprojectbackground] (B011) to node {} (B001);
 \draw[tikzprojectbackground] (B021) to node {} (B011);
 \draw[tikzprojectbackground] (B031) to node {} (B021);

 \draw[tikzprojectbackground] (B231) to node {} (B221);

 \draw[tikzprojectbackground] (B121) to node {} (B111);
 \draw[tikzprojectbackground] (B131) to node {} (B121);

 \draw[tikzincludebackground] (B311) to node {} (B321);
 \draw[tikzincludebackground] (B321) to node {} (B331);

 \end{pgfonlayer}
\end{tikzpicture}

\caption{
\emph{An overview of our structures (before considering completeness) and the connections between them.} See Sections~\ref{sec:composition-and-horns} and~\ref{sec:identity-and-degeneracy}. 
\\
Note: {\textbf{restr}} = restricted; {\textbf{sSt}} = semi-Segal type; {\textbf{univ}} = univalent; {\textbf{sS}} = semi-Segal [n-type]; {\textbf{trans}} = transitive; {\textbf{refl}} = reflexive.
\\
The front face of this 3D diagram consists of semisimplicial types with structure: 
in the front left column, we start with $(A_0, A_1)$ at the top
and add $A_2$, $A_3$, $A_4$ step by step.
Going from left to right on the front face, we add degeneracies, univalence, and a truncation condition (in the case of posets, we add them simultaneously).
\\
The back face of the diagram consists of categorical structures presented in ``ordinary'' style.
The back left column starts with graphs and adds composition structure in the first step, associativity in the next, and coherence for associativity (the pentagon) in the last step.
From left to right, the back face first adds identity structure, then a univalence condition, and finally a truncation condition.
\\
An arrow $A \twoheadrightarrow B$ means that definition $A$ arises from definition $B$ by adding one or more components (i.e.\ we can pass from $A$ to $B$ by forgetting something).
We write 
\hacktaildoublearrow 
instead of $\twoheadrightarrow$ to indicate that this added component is a proposition.
An arrow $A \mathrel{\mathrlap{\hspace{2.4pt}\raisebox{4pt}{$\scriptstyle{\sim}$}}\mathord{\leftrightarrow}} B$ expresses that the types $A$ and $B$ are equivalent.
An arrow $A \hookrightarrow B$ means that from an element of $A$ we can construct an element of $B$ in a canonical way.
} \label{fig:low-dimensional-categories}
\end{figure}

\begin{figure}[htbp]
\begin{tikzpicture}[x=0.995cm,y=-1cm]

 \newcommand{\fixedparbox}[1]{\parbox[][\boxheight][c]{\boxwidth}{\centering #1}}
 \newcommand{\higherfixedparbox}[1]{\parbox[][\boxhigherheight][c]{\boxwidth}{\centering #1}}

 \foreach \x in {0,1,2,3}{
   \coordinate (A\x) at (4*\x,0); 
 }

 \coordinate (CompCoor) at (1.5*4,2.5);
 \coordinate (CsstCoor) at (3*4,2.5);

\newcommand{\textbackground}[1]{#1}
 
 \node[tikzdefbackground] (B0) at (A0) 
 {\fixedparbox{
   \textbackground{wild 2-semicat} 
 }};

 \node[tikzdefbackground] (B1) at (A1) 
 {\fixedparbox{
   \textbackground{wild 2-precat} 
 }};

 \node[tikzdefbackground] (B2) at (A2) 
 {\fixedparbox{
   \textbackground{wild univ 2-category} 
 }};
 
 \node[tikzdefbackground] (B3) at (A3) 
 {\fixedparbox{
   \textbackground{univalent 2-category} 
 }};
 
 \draw[tikzprojectbackground,above] (B1) to node {\scriptsize $\pm$ identities} (B0);
 \draw[tikzpropprojectbackground,above] (B2) to node {\scriptsize $\pm$ univalence} (B1);
 \draw[tikzpropprojectbackground,above] (B3) to node {\scriptsize $\pm$ truncation} (B2);

 \draw[tikzpropprojectbackground, bend right=20] (B3) to node {} (B0);

 \node[tikzdefbackground] (Comp) at (CompCoor) 
 {\fixedparbox{
   \textbackground{wild compl 2-semicat} 
 }};

 \node[tikzdefbackground] (Csst) at (CsstCoor) 
 {\fixedparbox{
   \textbackground{complete 2-semicat} 
 }};

 \draw[tikzpropprojectbackground,sloped,above] (Comp) to node {\scriptsize $\pm$ completeness} (B0);
 \draw[tikzpropprojectbackground,above] (Csst) to node {\scriptsize $\pm$ truncation} (Comp);
 \draw[tikzbackgroundequiv] (B2) to node {} (Comp);
 \draw[tikzbackgroundequiv] (B3) to node {$\sim$} (Csst);

 
\end{tikzpicture}

\caption{%
\emph{Connection between univalence and completeness.} See Section~\ref{sec:completeness}. \\
The upper row is taken from Figure~\ref{fig:low-dimensional-categories} and shows that the definition of a univalent 2-category is obtained by starting with a wild 2-category, adding identities, then univalence, and finally a truncation condition.
Arrows are drawn with the same convention in mind as explained in the caption of Figure~\ref{fig:low-dimensional-categories}.
The long top arrow indicates that there is at most one (univalent) 2-categorical structure for a given wild 2-semicategory by Theorem~\ref{thm:id-structure-unique}.
The bottom row outlines an alternative construction: starting with a wild 2-semicategory, we can add completeness and truncation.
The resulting \emph{complete 2-semicategories} are equivalent to 2-categories. In particular, the identity structure can be constructed.
} \label{fig:univalence-completeness}
\end{figure}

This construction is split into several steps, and nearly all parts proceed \emph{without} making any truncatedness assumption.
We thereby obtain equivalences at a high generality between ``ill-behaved'' structures which we call \emph{wild}, although they might as well be called \emph{incoherent} or \emph{not necessarily coherent}.
First, we establish an equivalence between wild semicategorical structures and semi-Segal types.
Second, we show that equipping wild semicategories with identity structure (including coherences) amounts exactly to equipping semi-Segal types with degeneracies.
Third, we can add a univalence condition and fourth, a truncation condition, both of which are of course propositions.
Until here, our work is summarised by Figure~\ref{fig:low-dimensional-categories} on page~\pageref{fig:low-dimensional-categories}.

It is worth noting that the type ``having a degeneracy structure'' (equivalently, ``having an identity structure'') is in general \emph{not} a proposition, but we show that it is as soon as the structure is sufficiently truncated (e.g.\ for $2$-semicategories, the type of morphism must be a $1$-type).
This is important to make the connection to the next part of the paper, which is devoted to \emph{completeness}.
We define the completeness property, and show that it allows us to construct a degeneracy structure.
We show that a semi-Segal type is complete if and only if it is univalent, which can only be formulated assuming that the semi-Segal type already comes with degeneracies.
One instance of this result is presented in Figure~\ref{fig:univalence-completeness} (page~\pageref{fig:univalence-completeness}).
Together with the mentioned lemma that degeneracy structure is unique in the truncated case, we can conclude that complete semi-Segal $n$-types are equivalent to univalent $n$-categories for $n \in \{0,1,2\}$.
Whether this statement can be formulated and proved for general $n$ is unclear to us, but we offer a brief discussion in the final section (conclusions).

\paragraph{\textbf{Main contribution} (see Definition~\ref{def:final-definition} and Theorem~\ref{thm:main-result})}
We define the notion of a \emph{complete semi-Segal $n$-type} and show that, for $n \in \{0,1,2\}$, it is equivalent to the type of \emph{univalent $n$-categories}.
This suggests that we can take it as a definition for \emph{univalent $n$-category}, a definition of which has in homotopy type theory so far only been given for very small $n$. 

\paragraph{\textbf{Agda formalisation}}
We provide a formalisation%
\footnote{This formalisation is available at \url{https://gitlab.com/pcapriotti/agda-segal}.}
in the proof assistant Agda~\citep{norell:towards}, which proves 
the equivalences summarised in Figure~\ref{fig:low-dimensional-categories}.

\paragraph{\textbf{Organisation}}
Section~\ref{sec:preliminaries} specifies the theory we work in and recalls the definition of \emph{univalent categories} and the idea of \emph{semisimplicial types}.
In Section~\ref{sec:composition-and-horns}, we construct the equivalence between wild semicategorical structures and semi-Segal types.
These are equipped with identity and degeneracy structures (respectively) in Section~\ref{sec:identity-and-degeneracy}, and several lemmata about them are shown.
In Section~\ref{sec:completeness}, we introduce completeness and its prove its consequences.
Section~\ref{sec:conclusions} summarises our work and formulates the main result, and discusses applications and consequences of our work.

\section{Type Theory, Univalent Categories, and Semisimplicial Types} \label{sec:preliminaries}

Our work takes place in homotopy type theory, and we want to use the current section to clarify which theory we are working in (Section~\ref{sec:hott}). 
Moreover, as our work is related to univalent categories, we review the original construction by \citet{ahrens:rezk} in Section~\ref{sec:unicat}.
The purpose is solely to provide some context, as the actual definition of a univalent category will arise naturally from the constructions in the main part of this work. 
A further central concept that we need are \emph{semisimplicial types}~\citep{uf:semisimplicialtypes}.
We review the idea and introduce some notation and terminology in Section~\ref{sec:semisimplicialtypes}.

\subsection{Homotopy type theory} \label{sec:hott}

The theory we work in is a (slight notational variant of) the standard version of homotopy type theory, as presented in~\citep{hott-book}.  We make use of all the basic type formers, such as $\Pi$, $\Sigma$, equality and unit types, plus a \emph{univalent} universe.
However, we do not make use of higher inductive types, or even ordinary inductive types, since our development is completely elementary and limited to low, fixed restriction levels.  In particular, we do not use truncation operators such as propositional or set truncation.

In the following, we will call a type of the form $\Sigma (x : A), x = y$ a \emph{singleton}.  It is an immediate consequence of path induction for equality types that singletons are contractible types, and this is something that will be used repeatedly throughout the paper.
As a notational simplification, we omit the type annotation in $\Pi(x:A), B(x)$ if $A$ can easily be inferred, and we write $\Pi x, B(x)$ instead. 

\subsection{Univalent Categories} \label{sec:unicat}

Let us recall the definition of a \emph{univalent category}~\citep{ahrens:rezk} in homotopy type theory, which the authors present in two stages.
They first define a \emph{precategory}, and afterwards add a \emph{saturation} or \emph{univalence} condition, as follows:

\begin{definition}[precategory] \label{def:precategory}
 A \emph{precategory} is given by
 \begin{itemize}
  \item $\Ob : \UU$, a type of objects
  \item $\Hom : \Ob \times \Ob \to \UU$, a family of morphisms
  \item $\hcond : \Pi a b, \isset(\Hom(a,b))$, the condition that the types of morphisms are sets
  \item $(\Comp) : \Pi a b c, \Hom(b,c) \times \Hom(a,b) \to \Hom(a,c)$, the composition operation; we write $g \circ f$ instead of $(\Comp)(a,b,c,g,f)$
  \item $\alpha : \Pi fgh, h \circ (g \circ f) = (h \circ g) \circ f$, an equality certifying associativity (implicitly quantified over four objects)
  \item $\Ids : \Pi a, \Hom(a,a)$, the identity morphisms
  \item $\Pi (f : \Hom(a,b)), f \circ \Ids_a = f$ and $\Pi (f : \Hom(a,b)), \Ids_b \circ f = f$, the identity laws.
 \end{itemize}
\end{definition}

Recall that \citet{ahrens:rezk} go on and define the notion of an isomorphism in the straightforward way: 
a morphism $f : \Hom(a,b)$ is an isomorphism if it has an inverse, 
\begin{equation} \label{eq:isIsoAKS}
 \isIsoAKS(f) \defeq \sm{g : \Hom(b,a)} (g \circ f = \Ids_a) \times (f \circ g = \Ids_b).
\end{equation}
We write $a \cong_{\mathsf{AKS}} b$ for the type of isomorphisms $\sm{f : \Hom(a,b)} \isIsoAKS(f)$. 
There is a canonical function $\mathsf{idtoiso}_{\mathsf{AKS}} : a = b \to a \cong_{\mathsf{AKS}} b$, defined by path induction, where $\refl_a$ is sent to $\Ids_a$.
\begin{definition}[univalent category~{\cite[Def.~3.6]{ahrens:rezk}}] \label{def:univalent-category}
 A precategory is a \emph{univalent category} if, for all objects $a,b$, the function $\mathsf{idtoiso}_{\mathsf{AKS}}$ is an equivalence of types.
\end{definition}

\subsection{Semisimplicial Types} \label{sec:semisimplicialtypes}

A semisimplicial type restricted to level 3 as described in~\citep{uf:semisimplicialtypes} is given by a tuple $(A_0, A_1, A_2, A_3)$,
where:
\begin{itemize}
 \item $A_0$ can be thought of simply as a type of \emph{points}.
 \item Given any two such points $x_0, x_1 : A_0$, we have a type of \emph{edges} $A_1(x_0,x_1)$. Thus, $A_1$ is a family of types indexed over $A_0 \times A_0$.
 \item If we have three points $x_0, x_1, x_2 : A_0$, and three edges $x_{01} : A_1(x_0,x_1)$, $x_{12} : A_1(x_0,x_1)$, and $x_{02} : A_1(x_0,x_2)$, we can picture these six elements as an empty triangle. $A_2$ is a family of types indexed over such empty triangles, and we can think of elements of $A_2(x_0,x_1,x_2,x_{01},x_{12},x_{02})$ as \emph{fillers} for the triangle.
 \item Assume we are given four points, together with six edges, and four triangle fillers which fit together such that they form an empty tetrahedron.
 We think of $A_3$ as a family of types which, for any such empty tetrahedron, gives us a type of \emph{tetrahedron fillers}.
\end{itemize}
In fully explicit type-theoretic presentation, this means that the types are:
\begin{equation} \label{eq:3sstype-long}
\begin{alignedat}{3}
 & A_0 &&: &\; & \UU \\
 & A_1 &&: && A_0 \times A_0 \to \UU \\
 & A_2 &&: && \Sigma (x_0, x_1, x_2 : A_0), (x_{01} : A_1(x_0, x_1)), (x_{12} : A_1(x_1,x_2)), (x_{02} : A_1(x_0, x_2)) 
              \to \UU \\
 & A_3 &&: && \Sigma (x_0, x_1, x_2, x_3 : A_0), \\
 &     &&  && \phantom{\Sigma} (x_{01} : A_1(x_0, x_1)), (x_{12} : A_1(x_1,x_2)), (x_{23} : A_1(x_2, x_3)), \\
 &     &&  && \phantom{\Sigma} (x_{02} : A_1(x_0,x_2)), (x_{13} : A_1(x_1,x_3)), (x_{03} : A_1(x_0,x_3)), \\
 &     &&  && \phantom{\Sigma} (x_{012} : A_2(x_0, x_1, x_2, x_{01}, x_{12}, x_{02})), (x_{013} : A_2(x_0, x_1, x_3, x_{01}, x_{13}, x_{03})), \\
 &     &&  && \phantom{\Sigma} (x_{023} : A_2(x_0, x_2, x_3, x_{02}, x_{23}, x_{03})), (x_{123} : A_2(x_1, x_2, x_3, x_{12}, x_{23}, x_{13}))  \\
 &     &&  && \to \UU
\end{alignedat}
\end{equation}
These type expressions are uniform, but rather long (especially the type of $A_3$).
We therefore use the following shorthand notation.
First, instead of $x_{023} : A_2(x_0, x_2, x_3, x_{02}, x_{23}, x_{03})$, we write  $x_{023} : A_2(\bound{023})$.
Thus, the symbol $\subsetneq$ should be read as \emph{proper nonempty subset}.
Second, instead of writing a list $x_{012}: A_2(\ldots), \ldots, x_{123}:A_2(\ldots)$ with their types explicitly as in~\eqref{eq:3sstype-long}, we write $(x_{ijk} : A_2(\bound{ijk}))_{\indexlist{3}{ijk}}$.
In this notation,~\eqref{eq:3sstype-long} becomes:
\begin{alignat}{3} \label{eq:3sstype-short}
 & A_0 &&: &\; & \UU \\
 & A_1 &&: && \Sigma (x_i : A_0)_{\indexlist 1 i} \to \UU \\
 & A_2 &&: && \Sigma (x_i : A_0)_{\indexlist 2 i}, (x_{ij} : A_1(\bound {ij}))_{\indexlist 2 {ij}} \to \UU \\
 & A_3 &&: && \Sigma (x_i : A_0)_{\indexlist 3 i}, (x_{ij} : A_1(\bound {ij}))_{\indexlist 3 {ij}}, (x_{ijk} : A_2(\bound {ijk}))_{\indexlist 3 {ijk}} \to \UU.
 \label{eq:3d-tetra} \\
\intertext{Thanks to the simplified notation, it is feasible to write down the next stage, namely the type of 4-dimensional tetrahedron fillers:}
 & A_4 &&: &\; & \Sigma (x_i : A_0)_{\indexlist 4 i}, (x_{ij} : A_1(\bound {ij}))_{\indexlist 4 {ij}}, \notag \\
 &     &&  &   & \phantom{\Sigma} (x_{ijk} : A_2(\bound {ijk}))_{\indexlist 4 {ijk}}, (x_{ijkl} : A_3(\bound {ijkl}))_{\indexlist 4 {ijkl}} \label{eq:4d-tetra} \\
 &     &&  &   & \to \UU. \notag
\end{alignat}

\begin{definition}[restricted semisimplicial type] \label{def:restr-sst}
 For $n \in \{0,1,2,3,4\}$, we say that an \emph{n-restricted semisimplicial type} is a tuple $(A_0, \ldots, A_n)$
 with the types as given in~\eqref{eq:3sstype-short} and~\eqref{eq:4d-tetra}.
\end{definition}

\begin{remark} \label{rem:hott-is-too-weak}
 While it is perfectly possible to take a concrete (externally fixed) natural number $n$ and generate a type-theoretic expression $A_n$ which represents a type of $n$-dimensional tetrahedra fillers, it is (as already mentioned in the introduction) a long-standing open problem whether this can be encoded \emph{internally} in type theory.
 Concretely, it is unknown whether one can define a ``classifier'' $S : \N \to \UU$ such that for each $n$ the elements of $S(n)$ correspond to tuples $(A_0, \ldots, A_n)$.
 This is the reason why Definition~\ref{def:restr-sst} does not define \emph{$n$-restricted semisimplicial types} for any $n : \N$.
 We could do it for an externally fixed number $n$; however, $n \leq 4$ is sufficient for the main part of the paper.
 Thus, we have chosen the rather modest formulation in Definition~\ref{def:restr-sst} to ensure that the paper stays close to our formalisation.
\end{remark}

We use the remainder of this subsection to explain notation and terminology related to semisimplicial types.
Above, we have talked about \emph{triangle fillers} and \emph{(higher) tetrahedron fillers}
when referring to elements of $A_2(\ldots)$, $A_3(\ldots)$, or $A_4(\ldots)$.
Sometimes, it is useful to talk about the corresponding \emph{total spaces}, i.e.\ the actual types of triangles or (higher) tetrahedra.
As it will always be clear which semisimplicial type $(A_0, \ldots, A_n)$ we refer to (with $n \leq 4$), we simply write $\fulltetra n$ for the type of $n$-dimensional triangles/tetrahedra, slightly deviating from the usual convention where $\Delta^n$ denotes the standard $n$-simplex.
The type $\fulltetra n$ is a nested $\Sigma$-type with $2^n -1$ components, one for each non-empty subset of $\{0, \ldots, n\}$.
We can write this down concretely:
\begin{equation}
\begin{alignedat}{3}
 & \fulltetra 0 &\; & \defeq &\; \; & A_0 \qquad \text{-- or $(x_0 : A_0)$, to give the point a name}\\
 & \fulltetra 1 && \defeq && \Sigma (x_i : A_0)_{\indexlist 1 i}, (x_{01} : A_1(\bound{01})) \\
 & \fulltetra 2 && \defeq && \Sigma (x_i : A_0)_{\indexlist 2 i}, (x_{ij} : A_1(\bound{ij})_{\indexlist 2 {ij}}, (x_{012} : A_2(\bound{012})) \\
 & \fulltetra 3 && \defeq && \Sigma (x_i : A_0)_{\indexlist 3 i}, (x_{ij} : A_1(\bound{ij})_{\indexlist 3 {ij}}, \\
 &              &&        && \phantom{\Sigma} (x_{ijk} : A_2(\bound{ijk})_{\indexlist{3}{ijk}}, (x_{0123} : A_3(\bound{0123})) \\
 & \fulltetra 4 && \defeq && \Sigma (x_i : A_0)_{\indexlist 4 i}, (x_{ij} : A_1(\bound{ij})_{\indexlist 4 {ij}}, \\
 &              &&        && \phantom{\Sigma} (x_{ijk} : A_2(\bound{ijk})_{\indexlist{4}{ijk}}, (x_{ijkl} : A_3(\bound{ijkl}))_{\indexlist{4}{ijkl}}, \\
 &              &&        && \phantom{\Sigma} (x_{01234} : A_4(\bound{01234}))
\end{alignedat}
\end{equation}

We can also consider the type of \emph{boundaries} for an $n$-dimensional tetrahedron.
We write $\match n$ for this type. 
It is given simply by the type of $\fulltetra n$ as stated above with the very last component (the \emph{filler}) removed.
Thus, $\match n$ is a nested $\Sigma$-type with $2^n-2$ components, one for each non-empty proper subset of $\{0,\ldots,n\}$.
With this notation, we can represent the type of $A_n$ of~(\ref{eq:3sstype-short}-\ref{eq:4d-tetra}) as 
\begin{equation} \label{eq:An-as-family-over-boundary}
 A_n : \match n \to \UU.
\end{equation}

For $1 \leq n \leq 4$ and $0 \leq i \leq n$, we can define the so-called \emph{face map} $d^{n}_i : \fulltetra n \to \fulltetra{n-1}$.
Intuitively, it gives us one of the $(n+1)$ faces, namely the one which is opposite to the vertex labeled with $i$, by projection.
For an element $x : \fulltetra n$, we can by definition assume that it is a tuple
 $x \; \jdeq \; (\tetratupleconcrete{0, \ldots, n})$.
The function $d^n_i$ discards every $x_S$ which has $i \in S$; that is, 
\begin{equation}
 d^n_i(x) \; \defeq \; (x_{S|S \subseteq \{0,\ldots, i-1, i+1, \ldots,n\}}).
\end{equation}

\begin{remark}
 Given a category $\mathcal C$, it is in general not known how to encode a type of \emph{strict} functors $\mathcal C \to \UU$ in type theory (one might try to state the functor laws using the internal equality type, but this will not give a well-behaved notion).
 Consider the special case where $\mathcal C$ is the category with five objects $[0]$, $[1]$, $[2]$, $[3]$ and $[4]$, with $[n] \defeq \{0,\ldots,n\}$ and where morphisms from $[m]$ to $[n]$ are strictly increasing functions from $[n]$ to $[m]$.
 We write this category as $(\Delta^{\leq 4}_+)^\mathsf{op}$.
 In this case, a $4$-restricted semisimplicial type does indeed encode a strict functor $A : (\Delta^{\leq 4}_+)^\mathsf{op} \to \UU$: 
 We let $A([n]) \defeq \fulltetra n$. 
 To define $A$ on morphisms, it is enough to consider morphisms from $[n]$ to $[n-1]$ which are given by omitting a number $i$, since any morphism can be written as a composition of such maps; and $A$ maps such a morphism to $d^n_i$. 
 Since the $d^n_i$ are projections, one can see easily that the functor laws hold judgmentally.
 In general, this encoding works for any finite category $\mathcal C$ which has no nontrivial ``cycles''~\citep{shulman:inverse-diagrams}.
 A diagram $A$ of this form is usually called \emph{Reedy fibrant} \citep{reedy:model-categories} and $\boundary^n$ is called a \emph{matching object} of $A$.
\end{remark}

\section{Composition Structure and Horn Fillers} \label{sec:composition-and-horns}

The first parts of the categorical and higher categorical structures that we consider are notions of \emph{composition}.
Our various structures come in two different presentations, and with several levels of well-behavedness.

\subsection{Wild Semicategories}

Let us begin with categorical structures presented in the style of the precategories in Definition~\ref{def:precategory}. 
In fact, all of the following notions can be understood as weak versions of precategories:

\begin{definition}[wild semicategorical structure] \label{def:graphs-1-to-4}
 We define the following, where each step adds one level of structure:
 \begin{enumerate}
  \item A \emph{graph} is a type $\Ob$ together with a family $\Hom : \Ob \times \Ob \to \UU$.
  \item A \emph{transitive graph} is a graph, together with a \emph{composition operator}   \label{item:graphs-1-to-4-snd}
   \begin{equation} (\Comp) : \Pi a b c, \Hom(b,c) \times \Hom(a,b) \to \Hom(a,c); \end{equation} we write $g \circ f$ instead of $(\Comp)(a,b,c,g,f)$.
  \item A \emph{wild semicategory} is a transitive graph which, in addition, has an \emph{associator} \label{item:graphs-1-to-4-thrd}
  \begin{equation} \alpha : \Pi fgh, h \circ (g \circ f) = (h \circ g) \circ f.\end{equation}
  \item A \emph{wild 2-semicategory} is a wild semicategory together with a \emph{pentagonator},     \label{item:graphs-1-to-4-frth}
  \begin{equation} \label{eq:type-of-penta}
  \begin{alignedat}{2}
   &\Penta : \Pi fghk, 
     && 
     \mapfunc {k \circ \_} (\alpha(f,g,h)) \ct 
     \alpha(g \circ f, h, k) \ct
     \mapfunc {\_ \circ f} (\alpha(g,h,k)) \\ 
     && \; = \; & 
     \alpha(f, h \circ g, k) \ct
     \alpha(f,g,k\circ h).
  \end{alignedat}
  \end{equation}
 \end{enumerate}
\end{definition}

\begin{remark}
The type of $\Penta$, as given in~\eqref{eq:type-of-penta}, can alternatively be described as the type of proofs of commutativity for the following pentagon:

\begin{equation}
\begin{tikzpicture}[x=3cm,y=-1.5cm,baseline=(current bounding box.center)]
\node (P1) at (1,2) {\fbox{$((k \circ h) \circ g) \circ f$}}; 
\node (P2) at (-1,2) {\fbox{$(k \circ (h \circ g)) \circ f$}}; 
\node (P3) at (-1,1) {\fbox{$k \circ ((h \circ g) \circ f)$}}; 
\node (P4) at (0,0.2) {\fbox{$k \circ (h \circ (g \circ f))$}}; 
\node (P5) at (1,1) {\fbox{$(k \circ h) \circ (g \circ f)$}}; 

\draw[<-, shorten >=0.1cm, shorten <=0.1cm, thick] (P1) to node [above] {\scriptsize $\mapfunc {\_ \circ f} (g,h,k)$} (P2);
\draw[<-, shorten >=0.1cm, shorten <=0.1cm, thick] (P2) to node [left] {\scriptsize $\alpha(g \circ f, h, k)$} (P3);
\draw[<-, shorten >=0.1cm, shorten <=0.1cm, thick] (P1) to node [right] {\scriptsize $\alpha(f,g,k\circ h)$} (P5);
\draw[<-, shorten >=0.1cm, shorten <=0.1cm, thick] (P5) to node [above right] {\scriptsize $\alpha(f, h \circ g, k)$} (P4);
\draw[->, shorten >=0.1cm, shorten <=0.1cm, thick] (P4) to node [above left] {\scriptsize $\mapfunc {k \circ \_} (\alpha(f,g,h))$} (P3);
\end{tikzpicture}
\end{equation}
\end{remark}

\subsection{Semi-Segal Types}

As outlined in the introduction, one of our goals is to show how semisimplicial types enable us to encode categorical structure. 
The idea is that $A_0$ (the type of points) will form the type of objects, and $A_1(x_i, x_j)$ (the type of edges) will form the type of morphisms between $x_i$ and $x_j$.
We can require that $A_1$ is a family of \emph{sets}, as it is the case for the univalent categories by \citet{ahrens:rezk}. 

For the rest, we have to add more structure.
In the current section, we want to discuss the structure which is necessary to encode a composition operation (based on $A_2$), which may come with an associativity operator (based on $A_3$) and a pentagonator (coherence for associativity, based on $A_4$).
The notion of a \emph{horn} becomes important here.
The type of horns is similar to $\match n$, but with one additional component removed.
Thus, a horn is indexed over two natural numbers, say $n$ and $m$, where for $0 \leq m \leq n \leq 4$, and it represents the type of $n$-dimensional tetrahedra without the single cell of dimension $n$ and without the face opposite to $x_m$.
For example, 
for $n \jdeq 2$ and $m \jdeq 1$, such a horn consists of three points $x_0, x_1, x_2 : A_0$ and two edges $x_{01} : A_1(x_0, x_1)$, $x_{12} : A_1(x_1,x_2)$, thus we write
\begin{equation}
 \Lambda^2_1 \defeq \Sigma (x_0,x_1,x_2 : A_0), A_1(x_0,x_1) \times A_1(x_1,x_2). 
\end{equation}
In our shorthand notation, this becomes
\begin{equation}
 \Lambda^2_1 \defeq \Sigma (x_i : A_0)_{\indexlist 2 i}, (x_{ij} : A_1(\bound {ij}))_{\indexlisthorn 2 {ij} 1}. 
\end{equation}
Similarly, a $\Lambda^3_m$-horn (for $m \in \{0,1,2,3\}$) consists of four points, six edges, and three triangle fillers (one triangle filler is missing, namely the one not containing $x_m$).
Let us record this:
\begin{definition}[horns]
 Given a 2-restricted semisimplicial type $(A_0, A_1, A_2)$,
 and $m \in \{0,1,2\}$, we define the \emph{type of $\Lambda^2_m$-horns} to be
 \begin{align}
  \Lambda^2_m \defeq  \Sigma & (x_i : A_0)_{\indexlist 2 i}, (x_{ij} : A_1(\bound {ij}))_{\indexlisthorn 2 {ij} m}. 
\intertext{If we have a 3-restricted semisimplicial type, i.e.\ some $A_3$ extending $(A_0, A_1, A_2)$, we allow $m \in \{0,1,2,3\}$ and define}
  \Lambda^3_m \defeq  \Sigma & (x_i : A_0)_{\indexlist 3 i},  \\  
                             & (x_{ij} : A_1(\bound {ij}))_{\indexlist 3 {ij}}  , \notag \\
                             & (x_{ijk} : A_2(\bound {ijk}))_{\indexlisthorn 3 {ijk} m}.  \notag 
\intertext{If we in addition have $A_4$, and any $m \in \{0,1,2,3,4\}$, we define}
   \Lambda^4_m \defeq  \Sigma & (x_i : A_0)_{\indexlist 4 i}, \\
        & (x_{ij} : A_1(\bound {ij}))_{\indexlist 4 {ij}}, \notag \\
        & (x_{ijk} : A_2(\bound {ijk}))_{\indexlist 4 {ijk} m}, \notag \\
        & (x_{ijkl} : A_3(\bound {ijkl}))_{\indexlisthorn 4 {ijkl} m}.\notag 
\end{align}
\end{definition}
In general, an element of $\Lambda^n_m$ is called an \emph{inner horn} if $0 < m < n$, and an \emph{outer horn} if $m = 0$ or $m = n$.

In explicit representation, an element of $\Lambda^n_m$ has $2^n-3$ components, namely one for every nonempty subset of $\{0, \ldots, n\}$ which lacks at least one number different from $m$.
Thus, we can assume that for example a $(3,1)$-horn $u : \Lambda^3_1$ is a tuple and write it as
\begin{equation}
 u \jdeq (\horntuple{0123}{1}).
\end{equation}
The ``missing'' bit in $u$ is a triangle filler $x_{023} : A_2(\bound{023})$, and if we are given such an $x_{023}$, we can consider the type of tetrahdron fillers $x_{0123} : A_3(\bound{0123})$.
We call the type of such pairs $(x_{023},x_{0123})$ the type of \emph{fillers} for the horn $u$.
In the following definition, we write $[n]$ for the set $\{0,1,\ldots,n\}$, and we write $[n]-m$ for the same set where the number $m$ is removed.

\begin{definition}[horn fillers]
 Assume we have a semisimplicial type $(A_0, \ldots, A_n)$, with $n \in \{2,3,4\}$.
 Given a horn $u : \Lambda^n_m$, where we can assume $u \jdeq (\horntupleconcrete{n}{m})$, the \emph{type of horn-fillers of $u$}
 is the type
 \begin{equation}
  \hornfillers u n m \defeq \Sigma \left(x_{[n]-m} : A_{n-1}(\boundconcrete{[n]-m})\right) A_n(\boundconcrete{[n]}).
 \end{equation}
 Note that $\hornfillers u n m$ is always a type of pairs of exactly two components.
 We say that the semisimplicial type $(A_0, \ldots, A_n)$ has \emph{contractible $(n,m)$-horn filling} if the type of fillers is contractible for any element of $\Lambda^n_m$,
 \begin{equation}
  \contrhornfilling{A_0, \ldots, A_n}{n}{m} \jdeq \Pi(u : \Lambda^n_m), \iscontr(\hornfillers u n m).
 \end{equation}
\end{definition}

There are a couple of alternative formulations of the following definition of semi-Segal types.
We will discuss them later in Remark~\ref{rem:on-semiSegal-definition}.
\begin{definition}[restricted semi-Segal types] \label{def:semisegal-1-to-4}
 For $n \in \{1, 2, 3, 4\}$, we define an \emph{n-restricted semi-Segal type} to be an n-restricted semisimplicial type which satisfies \begin{equation}
  \contrhornfilling{A_0, \ldots, A_n}{p}{1},
 \end{equation}
 for $p \in \{2,\ldots,n\} $. In detail, this means:
 \begin{enumerate}
  \item A \emph{1-restricted semi-Segal type} is the same as a 1-restricted semisimplicial type.
  \item A \emph{2-restricted semi-Segal type} has $A_0$, $A_1$, $A_2$, and $h_2 : \contrhornfilling{A_0, A_1, A_2}{2}{1}$.
  \item A \emph{3-restricted semi-Segal type} consists of $A_0$, $A_1$, $A_2$, $h_2$ as above, plus the components $A_3$ and $h_3 : \contrhornfilling{A_0, \ldots, A_3}{3}{1}$.
  \item A \emph{4-restricted semi-Segal type} has, in addition to the above, the type family $A_4$ and the component $h_4 : \contrhornfilling{A_0, \ldots, A_4}{4}{1}$.
 \end{enumerate}
\end{definition}

The connection of semi-Segal types with the semicategories discussed before is the following statement, the proof of which will be the subject of Section~\ref{sec:proof-of-equivalence-sst-wildcats}:

\begin{restatable}{theorem}{composition}
\label{thm:wild-semi}
 Definitions~\ref{def:semisegal-1-to-4} and~\ref{def:graphs-1-to-4} define the same structures:
 \begin{enumerate}
  \item The type of graphs is equivalent to the type of 1-restricted semi-Segal types.\label{thm:wild-semi:1}
  \item The type of transitive graphs is equivalent to the type of 2-restricted semi-Segal types.
  \item The type of wild semicategories is equivalent to the type of 3-restricted semi-Segal types.
  \item The type of wild 2-semicategories is equivalent to the type of 4-restricted semi-Segal types.
 \end{enumerate}
\end{restatable}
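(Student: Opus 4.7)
The plan is to prove the four items by induction on the restriction level, exchanging the new pair $(A_n, h_n)$ for one new piece of categorical structure at each step. The underlying tool is the standard observation that, for any family $B : X \to \UU$, the type $\iscontr(\Sigma (x:X), B(x))$ is equivalent to specifying a distinguished $x_0 : X$ together with pointwise equivalences $B(x) \simeq (x_0 = x)$; equivalently, the total type $\Sigma (B : X \to \UU), \iscontr(\Sigma (x:X), B(x))$ is equivalent to $X$, via sending a pair to the first projection of its centre of contraction. Each horn filler type $\hornfillers{u}{n}{1}$ for $n \in \{2,3,4\}$ is exactly a $\Sigma$-type of this shape, with $X$ the type of the missing face and $B$ the family of fillers on top, so at each stage the pair $(A_n, h_n)$ can be reconstructed from a single ``canonical filler'' operation on horns.

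Case~(\ref{thm:wild-semi:1}) is immediate, since $(A_0, A_1)$ and $(\Ob, \Hom)$ carry the same data. For case~(2), a $\Lambda^2_1$-horn is a composable pair $(x_{01}, x_{12})$, and $h_2$ canonicalises the missing edge $x_{02}$; we set $x_{12} \circ x_{01} \defeq x_{02}$. Conversely, given $(\Comp)$, we define $A_2(\ldots, x_{02}) \defeq (x_{12} \circ x_{01} = x_{02})$, so the filler type becomes the singleton $\Sigma (x_{02} : A_1(x_0, x_2)), (x_{12} \circ x_{01} = x_{02})$. For case~(3), the same observation applied to $\Lambda^3_1$-horns canonicalises the missing triangle $x_{023}$; under the definition of $A_2$ from case~(2), $x_{023}$ has the type of a path $x_{23} \circ x_{02} = x_{03}$, and specialising the three included triangles $x_{012}, x_{013}, x_{123}$ to $\refl$ forces this to be exactly the type $h \circ (g \circ f) = (h \circ g) \circ f$ of the associator $\alpha$. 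Conversely, given $\alpha$, we define $A_3$ to be the identity type comparing $x_{023}$ with the canonical composite built from $\alpha$ and the three included triangle paths, which again makes the filler type a singleton.

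Case~(4) follows the same pattern: a $\Lambda^4_1$-horn omits the tetrahedron $x_{0234}$ together with the top filler $x_{01234}$, and $h_4$ canonicalises both. Under the inductive definitions of $A_2$ and $A_3$, this canonical $x_{0234}$, specialised to reflexivity on all lower-dimensional components, unfolds into an equation between two composites of associator cells, which is precisely the pentagon $\Penta$ displayed in~\eqref{eq:type-of-penta}. The main obstacle will be verifying this last identification on the nose: the four included tetrahedra $x_{0123}, x_{0124}, x_{0134}, x_{1234}$ each contribute one $\alpha$-cell to the boundary of the 4-simplex, and one must confirm that these assemble into the five edges of the pentagon with the expected $\mapfunc{k \circ \_}$ and $\mapfunc{\_ \circ f}$ functoriality witnesses appearing in the correct order. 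A simplifying observation throughout is that each $A_n$ for $n \geq 2$, being defined as an identity type of paths in $A_1$, is automatically as propositional as the contractible-$\Sigma$ lemma requires, so no additional coherence data or truncation hypothesis has to be introduced at any stage.
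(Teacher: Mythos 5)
Your overall strategy is exactly the paper's: the contractible-$\Sigma$ observation (Lemma~\ref{lem:unique-family}) converts each pair $(A_n, h_n)$ into a single canonical-filler operation, and composition, associator and pentagonator are read off from the resulting filler types. Cases (1)--(3) are essentially complete modulo one step you leave implicit: passing from the dependent product over \emph{all} horns $u : \Lambda^3_1$ to the product over only those horns whose included triangles are the canonical reflexivity fillers is an equivalence only because $\Lambda^3_1$ decomposes as the spine $\spine 3$ plus three contractible $\Lambda^2_1$-filler pairs (Lemma~\ref{lem:phi1}); ``specialising to $\refl$'' on its own only gives a restriction map, not an equivalence of $\Pi$-types.

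The genuine gap is in case (4). You assert that the canonical $x_{0234}$, ``specialised to reflexivity on all lower-dimensional components,'' unfolds to the pentagon --- but the components of the canonical $\Lambda^4_1$-horn over a spine \emph{cannot} all be reflexivity. The decomposition of $\Lambda^4_1$ (Lemma~\ref{lem:phi2}) involves the $\Lambda^3_1$-filler pairs $(x_{0123},x_{023})$, $(x_{0134},x_{034})$, $(x_{0124},x_{024})$ and $(x_{1234},x_{134})$, whose canonical \emph{triangle} components are forced to be associators rather than $\refl$: e.g.\ $y_{134} \equiv \alpha(y_{12},y_{23},y_{34})$ and $y_{034} \equiv \alpha(y_{01},y_{13},y_{34}) \ct \mapfunc{\_\circ y_{01}}(y_{134})$, a composite containing a nested associator. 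This is precisely why the paper first establishes the generalised associator formula $A_3(u,f) \cong (\widehat\alpha(u) = f)$ (Lemma~\ref{lem:1-general-assoc}): the tetrahedra occurring in the horn have non-trivial triangle faces, so one needs a formula for $A_3$ over an \emph{arbitrary} boundary, not just one with reflexivity faces. The chain of rewrites that then produces~\eqref{eq:type-of-penta} is the substantive content of case (4), and your proposal explicitly defers it while mis-stating the shape of the input; under a literal ``all refl'' specialisation the horn would not even be fillable, since its tetrahedron components would have to witness $\alpha = \refl$.
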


\subsection{Interlude: On Horns, Spines, and Tetrahedra}

Before giving the somewhat lengthy proof of Theorem~\ref{thm:wild-semi}, we want to show some simple but useful auxiliary lemmata.  Assume that $(A_0, \ldots, A_n)$ is an $n$-restricted semi-Segal type ($n \in \{2,3,4\}$ as before).

Recall that, so far, we have considered $\fulltetra n$, the type of full tetrahedra; $\match n$, the type of boundaries; and $\Lambda^k_i$, the type of $(k,i)$-horns.
Another useful type is what we call the \emph{spine} of a tetrahedron, consisting only of vertexes and edges which form a sequence:
\begin{definition}[spines]
 For a given semisimplicial type, the types of \emph{spines} are defined as:
 \begin{equation}
  \begin{alignedat}{4}
 & \spine 0 &\; & \defeq &\; \; & A_0 && \text{(i.e.\ just $\fulltetra 0$, which in turn is $A_0$)} \\
 & \spine 1 && \defeq && \Sigma (x_0, x_1 : A_0), (x_{01} : A_1(x_0,x_1)) && \text{(which is $\fulltetra 1$)} \\
 & \spine 2 && \defeq && \Sigma (x_i : A_0)_{\indexlist{2}{i}}, (x_{i(i+1)} : A_1(x_i, x_{i+1}))_{\indexlist{1}{i}} &\qquad & \text{(which is $\Lambda^2_1$)} \\
 & \spine 3 && \defeq && \Sigma (x_i : A_0)_{\indexlist{3}{i}}, (x_{i(i+1)} : A_1(x_i, x_{i+1}))_{\indexlist{2}{i}} && \text{(four points, three edges)} \\
 & \spine 4 && \defeq && \Sigma (x_i : A_0)_{\indexlist{4}{i}}, (x_{i(i+1)} : A_1(x_i, x_{i+1}))_{\indexlist{3}{i}} && \text{(five points, four edges)}\\
  \end{alignedat}
 \end{equation}
\end{definition}

There is a canonical projection $\phi_n : \fulltetra n \to \spine n$ which simply discards all triangle fillers and higher cells, and all edges apart from those in some $A_1(x_i, x_{i+1})$.
This also works if we replace the $\fulltetra n$ in the domain one of the other types that we have considered so far, since (at least for $n \geq 3$) all of these have strictly more components than $\spine n$.
In particular, we have $\phi_3 : \Lambda^3_1 \to \spine 3$ and $\phi_4 : \Lambda^4_1 \to \spine 4$.
From now, we assume that our semisimplicial type is in fact a semi-Segal type.

\begin{lemma}\label{lem:phi1}
For any $3$-restricted semi-Segal type, the canonical map
\begin{equation}
  \phi_3 : \Lambda^3_1 \to \spine 3
\end{equation}
is an equivalence.
\end{lemma}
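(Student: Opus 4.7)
The plan is to prove that every fiber of $\phi_3$ is contractible; this is the standard characterisation of equivalences, so the claim follows. Fix a spine $s \jdeq (x_0, x_1, x_2, x_3; x_{01}, x_{12}, x_{23}) : \spine 3$. Unfolding the definition of $\Lambda^3_1$, the fiber $\phi_3^{-1}(s)$ is a $\Sigma$-type whose elements consist of the three edges $x_{02}$, $x_{13}$, $x_{03}$ absent from the spine together with the three triangle fillers $x_{012}$, $x_{013}$, $x_{123}$ containing $x_1$ (the triangle $x_{023}$, opposite $x_1$, is absent because we work with a $(3,1)$-horn).

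I would then reassociate this six-fold $\Sigma$ by pairing each added edge with the unique triangle that uses it as its ``long'' face. Concretely, $(x_{02}, x_{012})$ is an element of $\hornfillers{(x_0,x_1,x_2,x_{01},x_{12})}{2}{1}$, the pair $(x_{13}, x_{123})$ is an element of $\hornfillers{(x_1,x_2,x_3,x_{12},x_{23})}{2}{1}$, and, once $x_{13}$ has been chosen, the pair $(x_{03}, x_{013})$ is an element of $\hornfillers{(x_0,x_1,x_3,x_{01},x_{13})}{2}{1}$. Up to this reassociation, the fiber is equivalent to the nested sigma
\[ \Sigma\,\bigl((x_{02},x_{012}):\hornfillers{(x_0,x_1,x_2,x_{01},x_{12})}{2}{1}\bigr),\; \Sigma\,\bigl((x_{13},x_{123}):\hornfillers{(x_1,x_2,x_3,x_{12},x_{23})}{2}{1}\bigr),\; \hornfillers{(x_0,x_1,x_3,x_{01},x_{13})}{2}{1}. \]

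By the semi-Segal assumption $h_2 : \contrhornfilling{A_0,A_1,A_2}{2}{1}$, each of the three factors is contractible. Since a nested $\Sigma$ of contractible types is contractible, $\phi_3^{-1}(s)$ is contractible for every $s$, and therefore $\phi_3$ is an equivalence.

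The main obstacle, beyond matching the dependencies, is the bookkeeping needed to verify that the reassociation of the six-fold $\Sigma$ into three horn-filler pairs is legitimate: one must check that each factor's horn data is drawn only from the spine and the previously chosen factors, and in particular that $x_{13}$ is produced before the horn underlying $(x_{03}, x_{013})$ is formed. Aside from that, the argument is just an iterated application of $h_2$.
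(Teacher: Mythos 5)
Your proof is correct and uses the same decomposition as the paper: both group the components of $\Lambda^3_1$ not present in $\spine 3$ into the three $\Lambda^2_1$-horn fillers $(x_{02},x_{012})$, $(x_{13},x_{123})$, $(x_{03},x_{013})$ (in that dependency order) and apply contractible horn filling. The only cosmetic difference is that you argue fiberwise over a fixed spine while the paper reassociates the total $\Sigma$-type directly.
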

\begin{proof}
The type $\Lambda^3_1$ is defined as a $\Sigma$-type with a number of components.  Note that, thanks to the naming convention that we are using, the type of any component of $\Lambda^3_1$ can be determined from the name of the corresponding variable, so we will refer to components simply by name in the following.

By reordering its components, we can see that $\Lambda^3_1$ is composed of:
\begin{itemize}
\item points $x_0, x_1, x_2, x_3$;
\item lines $x_{01}, x_{12}, x_{23}$;
\item a $\Lambda^2_1$-horn filler $x_{02}, x_{012}$;   \eqnum\label{eq:decomposing-Lambda31}
\item a $\Lambda^2_1$-horn filler $x_{13}, x_{123}$;
\item a $\Lambda^2_1$-horn filler $x_{03}, x_{013}$.
\end{itemize}

The Segal condition at level $2$ implies that the last three items in the list form contractible types, hence $\Lambda^3_1$ is equivalent to the $\Sigma$-type consisting of the first two items above, which is exactly $\spine 3$.
\end{proof}

We will also need a version of this lemma one level up.

\begin{lemma}\label{lem:phi2}
For any $4$-restricted semi-Segal type, the canonical map
\begin{equation}
  \phi_4 : \Lambda^4_1 \to \spine 4
\end{equation}
is an equivalence.
\end{lemma}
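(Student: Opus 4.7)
The plan is to mirror the decomposition strategy of Lemma~\ref{lem:phi1}, exhibiting $\Lambda^4_1$ as an iterated $\Sigma$-type whose base is $\spine 4$ and whose remaining components are organised into a sequence of $\Lambda^2_1$- and $\Lambda^3_1$-horn fillers. By $h_2$ and $h_3$ each such filler type is contractible, and projecting away a contractible $\Sigma$-fibre is an equivalence, so iterating yields that $\phi_4$ is an equivalence.

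Starting from a spine $(x_0, \ldots, x_4, x_{01}, x_{12}, x_{23}, x_{34})$, I would first apply three $\Lambda^2_1$-horn fillers on the consecutive triples $\{0,1,2\}, \{1,2,3\}, \{2,3,4\}$ to obtain the edges $x_{02}, x_{13}, x_{24}$ together with the triangle fillers $x_{012}, x_{123}, x_{234}$. Three further $\Lambda^2_1$-horn fillers on $\{0,1,3\}, \{1,2,4\}, \{0,1,4\}$ — whose required edges are now all in hand — then produce the remaining edges $x_{03}, x_{14}, x_{04}$ and the triangles $x_{013}, x_{124}, x_{014}$.

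Finally, four $\Lambda^3_1$-horn fillers on the sub-tetrahedra $\{0,1,2,3\}, \{1,2,3,4\}, \{0,1,2,4\}, \{0,1,3,4\}$ — which are exactly the $3$-faces of the $4$-simplex containing vertex~$1$, and which must be applied in this order so that $x_{134}$ produced from $\{1,2,3,4\}$ is available when the filler on $\{0,1,3,4\}$ is invoked — deliver the remaining triangle fillers $x_{023}, x_{134}, x_{024}, x_{034}$ and the four tetrahedron fillers $x_{0123}, x_{1234}, x_{0124}, x_{0134}$. A routine count (six extra edges, ten triangle fillers, four tetrahedron fillers) confirms that this exhausts the components of $\Lambda^4_1$ beyond $\spine 4$ without overlap.

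The only real difficulty is combinatorial: one must choose an order of horn fillers so that every horn has all of its inputs already constructed at the moment its filler is invoked. Once such an order is fixed (as above), the expression of $\Lambda^4_1$ as a tower of contractible $\Sigma$-fibres over $\spine 4$ is immediate, and the equivalence $\phi_4 : \Lambda^4_1 \to \spine 4$ follows.
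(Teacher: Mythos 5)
Your proposal is correct and is essentially the paper's own proof: the ten horn-filler pairs you exhibit (six $\Lambda^2_1$-fillers and four $\Lambda^3_1$-fillers) are exactly the pairs in the paper's decomposition of $\Lambda^4_1$, merely presented bottom-up from $\spine 4$ rather than by successive removal from the horn, and your ordering constraint (the filler on $\{1,2,3,4\}$ preceding that on $\{0,1,3,4\}$ to supply $x_{134}$) matches the paper's ``stage 2'' step. Nothing is missing.
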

\begin{proof}
The proof proceeds in a similar fashion to the one for Lemma~\ref{lem:phi1}: we can reorder the components of $\Lambda^4_1$ with the ones of $\spine 4$ coming first, followed by a number of inner horn fillers, which are contractible by the Segal condition.  It is not hard to see that such a decomposition is possible (indeed, there are several ways of constructing one).
Nevertheless, we will present one explicitly, because it will be important for the proof of Lemma~\ref{lem:2-semicategory-semisegal} below.

The general strategy that we follow here can be visualised by starting with a horn $\Lambda^4_1$ and removing inner horn fillers one at a time until all that is left is the spine.  The whole process can be divided into three stages:
\begin{description}
  \item[stage 1] consider all the components of $\Lambda^4_1$ that contain $1$ as an internal vertex, and pair each of them with the corresponding component obtained by removing the vertex $1$;  order the resulting list of pairs by decreasing dimension, and remove them;
  \item[stage 2] remove the tetrahedron $x_{1234}$ together with its inner face $x_{134}$;
  \item[stage 3] use the decomposition of Lemma~\ref{lem:phi1} to remove all the remaining components of the tetrahedron $x_{1234}$, leaving its spine.
\end{description}

By carefully applying this procedure, and listing all the components in reverse order of removal, we obtain the following decomposition:
\begin{itemize}
\item elements of the spine: $x_0, x_1, x_2, x_3, x_4, x_{01}, x_{12}, x_{23}, x_{34}$;
\item horns of stage $3$: $(x_{234}, x_{24})$, $(x_{123}, x_{13})$, $(x_{124}, x_{14})$;
\item horn of stage $2$: $(x_{1234}, x_{134})$.   \eqnum\label{eq:decomposing-Lambda41}
\item $2$-horns of stage $1$: $(x_{012}, x_{02})$, $(x_{013}, x_{03})$, $(x_{014}, x_{04})$;
\item $3$-horns of stage $1$: $(x_{0123}, x_{023})$, $(x_{0134}, x_{034})$, $(x_{0124}, x_{024})$;
\end{itemize}

All the listed components except for the first line are grouped in such a way that they form contractible types of horn fillers, hence we get that the full type of horns $\Lambda^4_1$ is equivalent to the type determined by the first item in the list, i.e.\ $\spine 4$, as required.
\end{proof}

\begin{remark} \label{rem:on-semiSegal-definition}
 Recall that a \emph{simplicial set}, as used for example in categorical homotopy theory,
 where all inner horns can be filled (not necessarily uniquely)
 is called a \emph{weak Kan complex}~\citep{bv:htpy-invar}, or a \emph{quasi-category}~\citep{joyal2002quasi},
 or an $\infty$-category~\citep{lurie:higher-topoi}. 
 In Definition~\ref{def:semisegal-1-to-4}, we only require contractible horn filling for horns of the form $\Lambda^p_1$.
 This may at first sight seem too minimalistic.
 Further, in the study of \emph{Segal spaces}, it is often part of the definition that all maps $\phi_p : \fulltetra{p} \to \spine{p}$ are equivalences.
 In our setting one can show:
 
 $(*)$ For any semi-Segal type, the following are equivalent:
 \begin{enumerate}
  \item For all $p \geq 2$, all $\Lambda^p_1$-horns have contractible filling (as in Definition~\ref{def:semisegal-1-to-4}).
  \item All inner horns have contractible filling.
  \item All \emph{generalised inner horns} (in the sense of~\cite[Sec.~2.1.1]{joyal2008theory}) have contractible filling.
  \item For all $p$, the canonical projection $\phi_p : \fulltetra{p} \to \spine{p}$ is an equivalence.
 \end{enumerate}

 For example, if we know that horns in $\Lambda^2_1$ and $\Lambda^3_1$ have contractible filling,
 we can conclude that $\Lambda^3_2$-horns have contractible filling as well.
 To see this, consider $x : \Lambda^3_2$.
 Its type of fillers is the type of pairs $P \defeq (x_{013}, x_{0123})$.
 Note that the type of pairs $(x_{03}, x_{023})$ is a filler for a $\Lambda^2_1$-horn and thus contractible by assumption.
 Therefore, $P$ is equivalent to the type of tuples $(x_{03}, x_{023}, x_{013}, x_{0123})$.
 But $(x_{03},x_{013})$ is the filler of another $\Lambda^2_1$-horn.
 Hence, $P$ is in fact equivalent to the type of pairs $(x_{023}, x_{0123})$, which fills a $\Lambda^3_1$-horn and is thus contractible.
 We do not need the statement $(*)$ and thus omit a more general proof, which can be obtained using only arguments analogous to the ones we have shown so far.
 
 While we only consider the restricted case of $n \leq 4$, all these statements can be proved for any externally fixed number $n$ with the strategies that we have shown. 
 Unfortunately we cannot express this in Agda due to the limitation mentioned in Remark~\ref{rem:hott-is-too-weak}.
\end{remark}

\subsection{Equivalence of the Structures} \label{sec:proof-of-equivalence-sst-wildcats}

We recall Theorem~\ref{thm:wild-semi}:

\composition*

The proof of Theorem~\ref{thm:wild-semi} will be split into several steps (Lemmata~\ref{lem:0-semicategory-semisegal},~\ref{lem:1-semicategory-semisegal},~\ref{lem:2-semicategory-semisegal}), one for each of its points, except the first, which is trivial, since the definitions of graph and of 1-restricted semi-Segal type coincide.
We begin with a simple lemma, which will be used multiple times in the following.

\begin{lemma} \label{lem:unique-family}
 Let $X$ be a type.
 Then, having an element of $X$ is equivalent to having a type family over $X$ which is inhabited for ``exactly one $x: X$'', i.e.\ there is an equivalence 
 \begin{equation} \label{eq:unique-family}
  X \simeq \sm{F : X \to \UU} \iscontr(\sm{x:X} F(x)).
 \end{equation}
\end{lemma}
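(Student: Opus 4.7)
The plan is to exhibit the equivalence by direct construction. Write $T \defeq \sm{F : X \to \UU} \iscontr(\sm{x:X} F(x))$ for the right-hand side. Define $\alpha : X \to T$ by sending $x_0 : X$ to the pair $\left(\lambda x.\, (x_0 = x),\, c_{x_0}\right)$, where $c_{x_0}$ is the standard proof that the singleton $\sm{x:X} (x_0 = x)$ is contractible with centre $(x_0, \refl_{x_0})$. In the other direction, define $\beta : T \to X$ by $\beta(F, p) \defeq \pi_1(\pi_1(p))$, that is, the first component of the supplied centre of contraction.

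First I would verify that $\beta \circ \alpha$ is the identity on $X$. This is immediate: given $x_0 : X$, the centre supplied by $c_{x_0}$ is $(x_0, \refl_{x_0})$ by definition, so projecting out its first component returns $x_0$.

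The substantive direction is showing $\alpha \circ \beta = \mathsf{id}_T$. Fix $(F, p) : T$, and set $x_0 \defeq \pi_1(\pi_1(p))$ together with $f_0 \defeq \pi_2(\pi_1(p))$. We must show $\alpha(x_0) = (F, p)$ in $T$. Since $\iscontr(-)$ is a mere proposition, equality in $T$ is determined by equality of the first components, so it suffices to prove $(\lambda x.\, (x_0 = x)) = F$. By function extensionality together with univalence, this in turn reduces to producing, for each $x : X$, an equivalence $(x_0 = x) \simeq F(x)$.

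The required equivalence is essentially the fundamental theorem of identity types: the canonical map $(x_0 = x) \to F(x)$ sending $q$ to $\mathsf{transport}^F(q, f_0)$ is an equivalence \emph{because} $\sm{x:X} F(x)$ is contractible. An explicit inverse sends $y : F(x)$ to the first component of the path from $(x_0, f_0)$ to $(x, y)$ provided by contractibility, and the two round-trip identities drop out of the characterisation of paths in $\Sigma$-types together with the fact that, in a contractible type, any two parallel paths agree. The main obstacle is purely the bookkeeping of dependent paths and transports; there is no conceptual subtlety, since the statement is really just the observation that a contractible total space is uniquely represented by the singleton family at its centre.
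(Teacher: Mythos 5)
Your proof is correct, but it takes a genuinely different route from the paper's. The paper does not construct a quasi-inverse by hand: it invokes the ``object classifier'' equivalence $(X \to \UU) \simeq \Sigma\,(Y:\UU), (Y \to X)$ (Theorem 4.8.3 of the HoTT book), observes that under this equivalence the condition $\iscontr(\sm{x:X} F(x))$ becomes $Y \simeq \mathbf{1}$, and concludes that the right-hand side is equivalent to $(\mathbf{1} \to X)$ and hence to $X$; only afterwards does it remark that the resulting map sends $x$ to the singleton family $F(y) \defeq (y = x)$. Your argument builds exactly that map directly and verifies the round trips, with the nontrivial direction handled by the fundamental theorem of identity types (a fibrewise map out of a based path family whose total space maps between contractible types is a fibrewise equivalence), plus subtype equality for the $\iscontr$ component and function extensionality with univalence to pass from fibrewise equivalences to equality of families. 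Both arguments rest on univalence in the end; the paper's is shorter because the bookkeeping is absorbed into the cited classifier theorem, while yours is more self-contained and makes the computational behaviour of the equivalence (which the paper needs later, e.g.\ in Corollary~\ref{cor:composition}) visible without any extra unfolding. No gaps: the one step worth flagging explicitly is that ``any two parallel paths in a contractible type agree'' is doing real work in your round-trip verification, but that is a standard fact and your appeal to it is legitimate.
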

\begin{proof}
 Having a family $F : X \to \UU$ is equivalent to having $Y : \UU$ together with $f : Y \to X$ by~\cite[Thm 4.8.3]{hott-book}, and under this equivalence, the contractibility condition becomes $Y \simeq \mathbf{1}$. Thus, the right-hand side of~\eqref{eq:unique-family} is equivalent to $(\mathbf{1} \to X)$, and thus to $X$.
 
 Note that the function part of the such constructed equivalence~\eqref{eq:unique-family} is given by mapping $x$ to the family $F(y) \defeq (y = x)$ which has the required property thanks to contractibility of singletons.
\end{proof}

\begin{lemma}\label{lem:0-semicategory-semisegal}
The type of transitive graphs is equivalent to the type of 2-restricted semi-Segal types.
\end{lemma}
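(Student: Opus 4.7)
The plan is to observe that the underlying data $A_0, A_1$ of a 2-restricted semi-Segal type already matches the data $\Ob, \Hom$ of a transitive graph, so after fixing these it suffices to exhibit an equivalence between the remaining pieces: on one side the pair $(A_2, h_2)$, and on the other side a composition operator $(\Comp)$.

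First I would reorganise $A_2$. Since $\match 2$ decomposes as a $\Sigma$-type whose last component is $x_{02} : A_1(x_0, x_2)$, we have the definitional equivalence $\match 2 \simeq \Sigma (u : \Lambda^2_1), A_1(x_0(u), x_2(u))$. Currying along this decomposition turns $A_2 : \match 2 \to \UU$ into the type $\Pi (u : \Lambda^2_1), A_1(x_0(u), x_2(u)) \to \UU$. Under the same reorganisation, $h_2$ becomes
\begin{equation}
  h_2 : \Pi (u : \Lambda^2_1),\ \iscontr\left(\sm{x_{02} : A_1(x_0(u), x_2(u))} A_2(u, x_{02})\right).
\end{equation}

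Second I would pull the $\Pi$ out of the pair using the type-theoretic ``axiom of choice'' (the distributivity of $\Sigma$ over $\Pi$), obtaining
\begin{equation}
  \sm{A_2}{h_2} \;\simeq\; \Pi (u : \Lambda^2_1), \sm{F : A_1(x_0(u), x_2(u)) \to \UU} \iscontr\left(\sm{e} F(e)\right).
\end{equation}
Now I would invoke Lemma~\ref{lem:unique-family} fibrewise (with $X \defeq A_1(x_0(u), x_2(u))$): for each horn $u$, the inner $\Sigma$-type is equivalent to $A_1(x_0(u), x_2(u))$ itself. Composing these fibrewise equivalences across $u$ (a standard consequence of function extensionality) yields
\begin{equation}
  \sm{A_2}{h_2} \;\simeq\; \Pi (u : \Lambda^2_1), A_1(x_0(u), x_2(u)).
\end{equation}
Finally I would unfold $\Lambda^2_1$ to recognise the right-hand side as the type of composition operators, i.e.\ $\Pi a\, b\, c,\ \Hom(b,c) \times \Hom(a,b) \to \Hom(a,c)$, modulo a trivial reordering of arguments.

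Assembling the equivalences in a $\Sigma$-preserving way over the shared data $(A_0, A_1)$ gives the claimed equivalence. The only mildly delicate step is the bookkeeping when applying Lemma~\ref{lem:unique-family} fibrewise and commuting it past the outer $\Pi$, but both moves are standard and do not require any additional hypothesis beyond function extensionality, which is already available in the ambient theory. I expect this to be the tersest obstacle rather than a genuine difficulty; conceptually the proof is just: ``contractible-horn-filling at level $2$ encodes a composition function because a type family with a contractible total space picks out a unique element of the base.''
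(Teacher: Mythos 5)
Your proposal is correct and follows essentially the same route as the paper's proof: decompose $\match 2$ as $\Sigma (u : \Lambda^2_1), A_1(u_0,u_2)$, curry $A_2$ and $h_2$, distribute $\Sigma$ over the outer $\Pi$, and apply Lemma~\ref{lem:unique-family} fibrewise to collapse the pair $(A_2, h_2)$ to $\Pi (u : \Lambda^2_1), A_1(u_0, u_2)$, i.e.\ the composition operator. The paper performs the $\Sigma$/$\Pi$ commutation and the fibrewise application silently, but there is no substantive difference.
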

\begin{proof}
Given a 1-restricted semi-Segal type $A \jdeq (A_0, A_1)$, we need to show that the data needed to extend $A$ to a 2-restricted semi-Segal type, i.e.\ the type of the pair $(A_2, h_2)$ in Definition~\ref{def:semisegal-1-to-4}, is equivalent to the type of the composition operator $(\Comp)$ in Definition~\ref{def:graphs-1-to-4}.

Recall from~\eqref{eq:An-as-family-over-boundary} that the type of $A_2$ can be written as $\boundary^2 \to \UU$.
Since $\boundary^2$ is equivalent to 
\begin{equation}
  \Sigma (u : \Lambda^2_1), A_1(u_0, u_2),
\end{equation}
we can curry to see that the type of $A_2$ is equivalent to 
\begin{equation}
  \Pi(u : \Lambda^2_1), A_1(u_0, u_1) \to \UU.
\end{equation}
Thus, the type of the pair $(A_2, h_2)$ is equivalent to
\begin{equation}
  \Pi (u : \Lambda^2_1),
     \Sigma (\overline{A_2} : A_1(u_0, u_2) \to \UU),
     \iscontr \left(\Sigma_{A_1(u_0, u_2)} \overline{A_2} \right).
\end{equation}
By Lemma~\ref{lem:unique-family}, this is equivalent to
\begin{equation}
  \Pi (u : \Lambda^2_1), A_1(u_0, u_2),
\end{equation}
which is nothing else than a reformulation of the type of the composition operator.
\end{proof}

\begin{corollary}\label{cor:composition}
Let $A \equiv (A_0, A_1, A_2)$ be the $2$-restricted semi-Segal type corresponding to a transitive graph as in Lemma~\ref{lem:0-semicategory-semisegal}.  Then, for all objects $x_0$, $x_1$, $x_2$ and morphisms $x_{01} : \Hom(x_0, x_1), x_{12} : \Hom(x_1, x_2), x_{02} : \Hom(x_0, x_2)$,
on the left-hand side seen as points and edges, we have
\begin{equation} \label{eq:equivalence-A2-comp}
  A_2(\bound[x]{012}) \simeq (x_{12} \circ x_{01} = x_{02}).
\end{equation}
If the construction is performed in the canonical way, then the equivalence~\eqref{eq:equivalence-A2-comp} holds judgmentally,
\begin{equation} \label{eq:jdgm-A2-comp}
  A_2(\bound[x]{012}) \equiv (x_{12} \circ x_{01} = x_{02}).
\end{equation}
Furthermore, the unique horn filler for the $\Lambda^2_1$-horn determined by $x_{01}$ and $x_{12}$ is given by $(x_{12} \circ x_{01}, \refl_{x_{12} \circ x_{01}})$.
\end{corollary}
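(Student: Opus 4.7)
My plan is to prove the three statements by unfolding the canonical equivalences used in Lemma~\ref{lem:0-semicategory-semisegal}. The essential step in that lemma applies Lemma~\ref{lem:unique-family} pointwise at each horn $u : \Lambda^2_1$, turning the data of a family $\overline{A_2} : A_1(u_0, u_2) \to \UU$ together with contractibility of $\Sigma(x_{02} : A_1(u_0,u_2)), \overline{A_2}(x_{02})$ into the choice of a single element of $A_1(u_0, u_2)$. Under this equivalence, the composition operator $\Comp$ corresponds precisely to picking $x_{12} \circ x_{01}$ for each horn $u$.

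Next, I would invoke the explicit construction at the end of the proof of Lemma~\ref{lem:unique-family}, which sends $x : X$ to the family $F(y) \defeq (y = x)$. Applying this pointwise in our situation, the element $x_{12} \circ x_{01}$ produces the family $\overline{A_2}(x_{02}) \defeq (x_{12} \circ x_{01} = x_{02})$. Uncurrying recovers $A_2$, and so $A_2(\bound[x]{012})$ unfolds definitionally to the identity type $(x_{12} \circ x_{01} = x_{02})$, which is exactly the judgmental equation~\eqref{eq:jdgm-A2-comp}; the weaker equivalence~\eqref{eq:equivalence-A2-comp} follows as an immediate consequence.

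For the final claim, the type of fillers of the $\Lambda^2_1$-horn determined by $x_{01}$ and $x_{12}$ is $\Sigma(x_{02} : A_1(x_0, x_2)), A_2(\bound[x]{012})$; by the judgmental equation just established, this is definitionally the singleton $\Sigma(x_{02} : A_1(x_0, x_2)), (x_{12} \circ x_{01} = x_{02})$, whose canonical center of contraction is $(x_{12} \circ x_{01}, \refl_{x_{12} \circ x_{01}})$, as required.

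There is no substantial obstacle here: the corollary is essentially bookkeeping on top of Lemmata~\ref{lem:unique-family} and~\ref{lem:0-semicategory-semisegal}. The only delicate point, and what makes the judgmental claim meaningful rather than tautological, is to verify that the canonical choice in Lemma~\ref{lem:unique-family} is the specific family $F(y) \defeq (y = x)$, and not some merely propositionally equivalent variant — so that the $A_2$ produced from $\Comp$ is literally the identity type, in turn ensuring that the contraction center of the horn filler type is visibly $(x_{12} \circ x_{01}, \refl)$ rather than an obfuscated transport thereof.
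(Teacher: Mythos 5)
Your proposal is correct and follows essentially the same route as the paper, which simply states that the equivalence is immediate from the construction in Lemma~\ref{lem:0-semicategory-semisegal} and that the judgmental equality can be verified by checking each step (as confirmed in the Agda formalisation). Your unfolding of the canonical map in Lemma~\ref{lem:unique-family} sending $x$ to the family $F(y) \defeq (y = x)$, and the resulting identification of the horn-filler type with a singleton centred at $(x_{12} \circ x_{01}, \refl)$, is exactly the bookkeeping the paper leaves implicit.
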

\begin{proof}
The equivalence~\eqref{eq:equivalence-A2-comp} is immediate from the construction in Lemma~\ref{lem:0-semicategory-semisegal}.
We can check each step to convince ourselves of the judgmental equality~\eqref{eq:jdgm-A2-comp}, which indeed does hold in our Agda formalisation.
\end{proof}

\begin{lemma}\label{lem:1-semicategory-semisegal}
The type of wild semicategories is equivalent to the type of $3$-restricted semi-Segal types.
\end{lemma}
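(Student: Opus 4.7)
The plan is to reuse the correspondence already established in Lemma~\ref{lem:0-semicategory-semisegal} for the first three levels, and then show that the additional data needed to extend a $2$-restricted semi-Segal type to a $3$-restricted one (namely a pair $(A_3, h_3)$) corresponds exactly to the associator $\alpha$ which a wild semicategory adds on top of a transitive graph. Throughout, I will work at a fixed transitive graph / $2$-restricted semi-Segal type, using Corollary~\ref{cor:composition} to identify the triangle fillers $A_2(x_0,x_1,x_2,x_{01},x_{12},x_{02})$ with equalities $x_{12} \circ x_{01} = x_{02}$.

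First I would decompose $\partial\fulltetra 3$ as an iterated $\Sigma$-type: since the boundary consists of a $\Lambda^3_1$-horn together with the missing face $x_{023} : A_2(\bound{023})$, currying turns a family $A_3 : \partial\fulltetra 3 \to \UU$ into a function
\begin{equation}
  \Pi (u : \Lambda^3_1),\; A_2(\bound[u]{023}) \to \UU,
\end{equation}
and the contractibility condition $h_3$ on fillers becomes, for each $u$, the statement that the total space of this family is contractible. By Lemma~\ref{lem:unique-family}, the data of $(A_3, h_3)$ is therefore equivalent to a dependent function
\begin{equation}
  \Pi (u : \Lambda^3_1),\; A_2(\bound[u]{023}).
\end{equation}

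Next I would use Lemma~\ref{lem:phi1}, which says $\phi_3 : \Lambda^3_1 \xrightarrow{\sim} \spine 3$, to reparametrise the above over the spine. A spine consists of four points $x_0, \ldots, x_3$ and three edges $x_{01}, x_{12}, x_{23}$, and through the inverse of $\phi_3$ the remaining components of the horn are filled by the canonical horn fillers supplied by Corollary~\ref{cor:composition}: $x_{02} \jdeq x_{12} \circ x_{01}$, $x_{13} \jdeq x_{23} \circ x_{12}$, $x_{03} \jdeq (x_{23} \circ x_{12}) \circ x_{01}$, with the three triangle fillers $x_{012}, x_{013}, x_{123}$ equal to $\refl$. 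Substituting these into $A_2(\bound[u]{023})$ and applying Corollary~\ref{cor:composition} once more yields the equation $x_{23} \circ (x_{12} \circ x_{01}) = (x_{23} \circ x_{12}) \circ x_{01}$.

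Composing all these equivalences, the extension data $(A_3, h_3)$ is equivalent to
\begin{equation}
  \Pi x_0 x_1 x_2 x_3\, x_{01} x_{12} x_{23},\; x_{23} \circ (x_{12} \circ x_{01}) = (x_{23} \circ x_{12}) \circ x_{01},
\end{equation}
which is exactly the type of the associator $\alpha$ in Definition~\ref{def:graphs-1-to-4}. I expect the main obstacle to be purely bookkeeping: tracking which edges and triangles get identified with composites and reflexivities under the inverse of $\phi_3$, and ensuring that the resulting boundary data for $A_2(\bound{023})$ matches the associator equation on the nose (which is where the judgmental version~\eqref{eq:jdgm-A2-comp} of Corollary~\ref{cor:composition} is essential). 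Once this is done, composing the equivalence above with that of Lemma~\ref{lem:0-semicategory-semisegal} gives the required equivalence between wild semicategories and $3$-restricted semi-Segal types.
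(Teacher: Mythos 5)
Your proposal is correct and follows essentially the same route as the paper's proof: curry $(A_3,h_3)$ over $\Lambda^3_1$, collapse via Lemma~\ref{lem:unique-family}, reparametrise along $\phi_3$ from Lemma~\ref{lem:phi1}, and compute the boundary components of $\phi_3^{-1}(s)$ using Corollary~\ref{cor:composition} to land on the associator type. The bookkeeping you anticipate is exactly what the paper carries out, with the same identifications $y_{02} \equiv y_{12}\circ y_{01}$, $y_{13}\equiv y_{23}\circ y_{12}$, $y_{03}\equiv y_{13}\circ y_{01}$ and the three fillers equal to $\refl$.
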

\begin{proof}
We know that a $3$-restricted semi-Segal type is given by a $2$-restricted semi-Segal type, plus the type $A_3$ of fillers for $3$-dimensional boundaries (\refeq{eq:3d-tetra}), and the statement that $\Lambda^3_1$-horns have contractible filling, i.e.
  $h_3 : \Pi (u : \Lambda^3_1), \iscontr \left( \hornfillers u 3 1 \right)$.

First of all, it is easy to see, just by expanding the definitions, that the domain of $A_3 : \boundary^3 \to \UU$ is equivalent to the type
  $\Sigma (u : \Lambda^3_1), A_2(\bound[u]{023})$.

Therefore, just like in the proof of Lemma~\ref{lem:0-semicategory-semisegal}, we can curry both $A_3$ and $h_3$, and rewrite them into a single function of type
  $\Pi (u : \Lambda^3_1), T(u)$,
where
\begin{equation}
  T(u) \defeq
  \Sigma \left(\overline{A_3} : A_2(\bound[u]{023}) \to \UU\right),
    \iscontr \left(\Sigma \overline{A_3} \right),
\end{equation}
and, by Lemma~\ref{lem:unique-family}, we have that
\begin{equation}\label{eq:raw-associator-horn}
  T(u) \cong A_2(\bound[u]{023}).
\end{equation}
Using the isomorphism $\phi_3$ of Lemma~\ref{lem:phi1}, we get that this type can be further rewritten as
\begin{equation}\label{eq:raw-associator}
  \Pi (s : \spine 3), A_2 \left( \bound[\left(\phi_3^{-1}(s)\right)]{023} \right),
\end{equation}
so all that remains to be shown is that the type~\eqref{eq:raw-associator} above is equivalent to the type of the associator.

In order to understand what the type~\eqref{eq:raw-associator} looks like, we need to examine the construction of the isomorphism $\phi_3$ of Lemma~\ref{lem:phi1}, and, more specifically, its inverse $\phi_3^{-1} : \spine 3 \to \Lambda^3_1$.  Recall that, in the proof of Lemma~\ref{lem:phi1}, all components of $\Lambda^3_1$ that are not present in $\spine 3$ were grouped into pairs consisting of the elements of an inner horn filler.  
Thus, we can assume that $A_2$ as well as the components of the map $\phi_3^{-1}$ have the form described in Corollary~\ref{cor:composition}.

So, let $s : \spine 3$, and set $y \defeq \phi_3^{-1}(s)$.  By going through the decomposition of $\Lambda^3_1$ in terms of contractible horn filling as given in~\eqref{eq:decomposing-Lambda31}, we get:
\begin{equation}
\begin{alignedat}{6}
  & y_{02} \equiv y_{12} \circ y_{01},\; && y_{012} \equiv \refl_{};   &\qquad
  & y_{13} \equiv y_{23} \circ y_{12},\; && y_{123} \equiv \refl_{};         &\qquad
  & y_{03} \equiv y_{13} \circ y_{01},\; && y_{013} \equiv \refl_{}.
\end{alignedat}
\end{equation}
We can now calculate the type $A_2(\bound[y]{023})$ as follows:
\begin{equation}
\begin{alignedat}{2}
  &&& A_2(\bound[y]{023}) \\
  & \cong & \quad & (y_{23} \circ y_{02} = y_{03}) \\
  & \cong &   & (y_{23} \circ (y_{12} \circ y_{01}) = (y_{23} \circ y_{12}) \circ y_{01}),
\end{alignedat}
\end{equation}
and this last expression exactly matches the type of the associator given in Definition~\ref{def:graphs-1-to-4}.
\end{proof}

\begin{lemma}\label{lem:1-general-assoc}
Let $A \jdeq (A_0, A_1, A_2, A_3)$ the $3$-restricted semi-Segal type corresponding to a wild semicategory as in Lemma~\ref{lem:1-semicategory-semisegal}.  Let $u : \Lambda^4_1$ be a horn in $A$.  Define the \emph{generalised associator} of $u$ by:

\begin{equation}
\begin{alignedat}{2}
  & \widehat\alpha : \ \Pi (u : \Lambda^4_1), A_2(\bound[u]{023}) \\
  & \widehat\alpha(u) \defeq
    \ \mapfunc {u_{23} \circ \_} (u_{012}^{-1}) \ct
     \alpha(u_{01}, u_{12}, u_{23}) \ct
     \mapfunc {\_ \circ u_{01}} (u_{123}) \ct
     u_{013}.
\end{alignedat}
\end{equation}
Then, if $(u, f)$ is the boundary of a tetrahedron in $A$, with $u$ being the corresponding $\Lambda^4_1$-horn and $f$ the remaining face, we have:
\begin{equation}\label{eq:gen-associator-iso}
  A_3(u, f) \cong (\widehat\alpha(u) = f),
\end{equation}
and furthermore, the unique filler for $u$ is equal to $(\widehat\alpha(u), \refl_{\widehat\alpha(u)})$.
\end{lemma}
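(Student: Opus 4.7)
The plan is to reduce the statement to the situation already analysed in Lemma~\ref{lem:1-semicategory-semisegal} by path induction on the three upper triangle fillers $u_{012}$, $u_{123}$, $u_{013}$ of $u$, reinterpreted as equations between morphisms.

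First, I would reinterpret each of those components as an equality: by Corollary~\ref{cor:composition}, $A_2(\bound[u]{ijk})$ is judgmentally the type $u_{jk} \circ u_{ij} = u_{ik}$, so that $u_{012}$ amounts to a path $u_{12} \circ u_{01} = u_{02}$, and similarly for $u_{123}$ and $u_{013}$. The right-hand sides $u_{02}$, $u_{13}$, $u_{03}$ occur only through these equalities in the remaining data of $u$, so I can path-induct on the three of them in succession. After the inductions, $u_{02} \jdeq u_{12} \circ u_{01}$, $u_{13} \jdeq u_{23} \circ u_{12}$, $u_{03} \jdeq (u_{23} \circ u_{12}) \circ u_{01}$, and all three upper fillers become $\refl$. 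Each factor of $\widehat\alpha(u)$ other than the associator then collapses to $\refl$ via $\mapfunc{\_}(\refl) \jdeq \refl$, so $\widehat\alpha(u)$ becomes just $\alpha(u_{01}, u_{12}, u_{23})$ (up at worst to trivial cancellations of the form $p \ct \refl = p$).

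Second, in this reduced state the horn $u$ is exactly $\phi_3^{-1}(s)$ for the spine $s \defeq (u_0, u_1, u_2, u_3, u_{01}, u_{12}, u_{23})$ as constructed in Lemma~\ref{lem:phi1}, so the chain of equivalences from the proof of Lemma~\ref{lem:1-semicategory-semisegal} applies directly. Carrying out that chain while keeping $f$ as a free variable, rather than collapsing it by contractibility, produces $A_3(u, f) \cong (\alpha(u_{01}, u_{12}, u_{23}) = f)$, which under our reductions is exactly $A_3(u, f) \cong (\widehat\alpha(u) = f)$. Path induction then lifts the equivalence to arbitrary $u_{012}, u_{123}, u_{013}$. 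The identification of the unique filler follows from the same construction: the equivalence of Lemma~\ref{lem:unique-family} sends an element $x : X$ to the family $(y = x)$ with canonical center $(x, \refl_x)$, so tracing through the proof pins the center of contraction of $\Sigma (f : A_2(\bound[u]{023})), A_3(u, f)$ to the pair $(\widehat\alpha(u), \refl_{\widehat\alpha(u)})$.

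The main point that will need attention is ensuring the judgmental equalities of Corollary~\ref{cor:composition} are genuinely used in the definitional setup, so that $\widehat\alpha(u)$ and $f$ really inhabit literally the same type $u_{23} \circ u_{02} = u_{03}$ after the path inductions, with no further transport required to compare them. This is bookkeeping rather than a conceptual obstacle, since the equivalences of Lemmata~\ref{lem:0-semicategory-semisegal} and~\ref{lem:1-semicategory-semisegal} were set up canonically to preserve these equalities; it is precisely the kind of subtlety that the Agda formalisation is designed to handle.
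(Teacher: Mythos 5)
Your proposal is correct and follows essentially the same route as the paper: the paper reduces to horns of the form $\phi_3^{-1}(s)$, where $\widehat\alpha$ collapses to $\alpha(s)$, and then transports along the equivalence $\phi_3$ of Lemma~\ref{lem:phi1}; your based path inductions on the pairs $(u_{02},u_{012})$, $(u_{13},u_{123})$, $(u_{03},u_{013})$ are exactly the singleton contractions underlying that equivalence, so the two arguments coincide. Your identification of the unique filler via the centre of contraction from Lemma~\ref{lem:unique-family} likewise matches the paper's remark that the filler type is manifestly the type of singletons at $\widehat\alpha(u)$.
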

\begin{proof}
By examining the proof of Lemma~\ref{lem:1-semicategory-semisegal}, we see that for all $s : \spine 3$, and $f : A_2(\bound[u]{023})$, where $u \defeq \phi_3^{-1}(s)$,
\begin{equation}
  A_3(u, f) \cong (\alpha(s) = f),
\end{equation}
so the first assertion follows immediately from the observation that the type $\alpha(s) = f$ is equivalent to $\widehat\alpha(\phi_3^{-1}(s)) = f$ and the fact that $\phi$ is an equivalence.

The second assertion is an immediate consequence of the first, since the type of horn fillers is manifestly equivalent to the type of singletons of $\widehat\alpha(u)$.
\end{proof}

In the following, we will assume that the isomorphism of Lemma~\ref{lem:1-semicategory-semisegal} maps any wild semicategory into a 3-restricted semi-Segal type $A \jdeq (A_0, A_1, A_2, A_3)$ where
\begin{equation}
  A_3(u, f) \equiv (\widehat \alpha(u) = f).
\end{equation}

This is possible thanks to Lemma~\ref{lem:1-general-assoc}.

\begin{lemma}\label{lem:2-semicategory-semisegal}
The type of wild $2$-semicategories is equivalent to the type of $4$-restricted semi-Segal types.
\end{lemma}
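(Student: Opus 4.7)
The plan is to follow the same template used in the proof of Lemma~\ref{lem:1-semicategory-semisegal}, but shifted up by one dimension, exploiting the equivalence $\phi_4 : \Lambda^4_1 \simeq \spine 4$ from Lemma~\ref{lem:phi2} and the strict characterisation $A_3(u, f) \equiv (\widehat\alpha(u) = f)$ from Lemma~\ref{lem:1-general-assoc}. A $4$-restricted semi-Segal type is given by a $3$-restricted one together with the pair $(A_4, h_4)$, where $A_4 : \boundary^4 \to \UU$ and $h_4$ is contractibility of $\Lambda^4_1$-horn filling. Since $\boundary^4$ is equivalent to $\Sigma(u : \Lambda^4_1),\, A_3(\bound[u]{0234})$, we can curry and apply Lemma~\ref{lem:unique-family} in the same way as before, reducing the pair $(A_4, h_4)$ to a single dependent function
\begin{equation}
  \Pi(u : \Lambda^4_1),\; A_3(\bound[u]{0234}).
\end{equation}
Composing with $\phi_4^{-1}$, this becomes $\Pi(s : \spine 4),\, A_3(\bound[\phi_4^{-1}(s)]{0234})$, which we must identify with the type of pentagonators from Definition~\ref{def:graphs-1-to-4}.

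To carry out this identification, I would unfold $\phi_4^{-1}(s)$ using the explicit decomposition of $\Lambda^4_1$ exhibited in~\eqref{eq:decomposing-Lambda41}. The stage-$1$ and stage-$3$ entries involving $A_2$-fillers are produced by Corollary~\ref{cor:composition}: all triangles $x_{012}, x_{123}, x_{124}, x_{234}, x_{013}, x_{014}$ are $\refl$, while the corresponding missing edges are defined as the appropriate binary composites $x_{02}, x_{13}, x_{14}, x_{24}, x_{03}, x_{04}$. The stage-$2$ entry, filling the $\Lambda^3_1$-horn on vertices $\{1,2,3,4\}$, is produced by Lemma~\ref{lem:1-general-assoc}: since all its triangle inputs are $\refl$, the generalised associator collapses to $x_{134} \equiv \alpha(x_{12}, x_{23}, x_{34})$, with $x_{1234} \equiv \refl$. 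Analogously, the three stage-$1$ entries involving $A_3$-fillers determine $x_{023}, x_{024}, x_{034}$ as explicit expressions involving $\alpha$ and $\mathsf{ap}$.

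With all components of $\phi_4^{-1}(s)$ made explicit, I would compute the generalised associator $\widehat\alpha$ applied to the $\Lambda^3_1$-sub-horn of $\bound[u]{0234}$ on vertices $\{0,2,3,4\}$, and use Lemma~\ref{lem:1-general-assoc} to rewrite
\begin{equation}
  A_3(\bound[\phi_4^{-1}(s)]{0234}) \; \simeq \; \bigl(\widehat\alpha(\mathrm{horn}) = x_{034}\bigr).
\end{equation}
Plugging in the values computed in the previous step, the right-hand side collapses (using only that $\mathsf{ap}_{h \circ \_}(\refl)$, $\mathsf{ap}_{\_ \circ f}(\refl)$, and $\refl^{-1}$ are $\refl$, plus standard cancellation of inverse paths) to an equation whose two sides are exactly the two sides of the pentagon~\eqref{eq:type-of-penta} with $f \defeq x_{01}$, $g \defeq x_{12}$, $h \defeq x_{23}$, $k \defeq x_{34}$. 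Since equivalences of types of equalities are preserved by rearranging composites via inverses, the resulting type is equivalent to $\Pi fghk, \Penta(f,g,h,k)$, which is the type of the pentagonator.

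The main obstacle is the combinatorial and path-algebraic bookkeeping in the last step: there are three stage-$1$ three-horns, one stage-$2$ filler, and one final generalised associator to combine, and one must verify that, after substituting all the explicit values, the two sides of the resulting equality in $A_1$ agree precisely with the two sides of the pentagon up to manifest path-rearrangement. This is routine but tedious, and it is essentially the content of our Agda formalisation, which certifies the identification judgmentally wherever possible and up to explicit path-algebraic manipulations otherwise.
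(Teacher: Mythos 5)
Your proposal is correct and follows essentially the same route as the paper's proof: reduce $(A_4, h_4)$ to $\Pi(s : \spine 4),\, A_3(\bound[{\left(\phi_4^{-1}(s)\right)}]{0234})$ via currying, Lemma~\ref{lem:unique-family}, and Lemma~\ref{lem:phi2}, then unfold $\phi_4^{-1}$ along the decomposition~\eqref{eq:decomposing-Lambda41} using Corollary~\ref{cor:composition} and Lemma~\ref{lem:1-general-assoc}, and finally simplify to the pentagon~\eqref{eq:type-of-penta}. The paper carries out the concluding path-algebraic computation explicitly where you leave it as routine bookkeeping, but your identification of all the intermediate components ($y_{134}$, $y_{023}$, $y_{024}$, $y_{034}$, and the substitution $f, g, h, k$) matches the paper exactly.
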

\begin{proof}
We will prove this in two steps, following the same strategy as the proof of Lemma~\ref{lem:1-semicategory-semisegal}.
First, we show that the extra data needed to turn a $3$-restricted semi-Segal type into a $4$-restricted semi-Segal type is equivalent to the type
\begin{equation} \label{eq:raw-pentagon}
  \Pi (s : \spine 4), A_3\left(\bound[{{\left(\phi_4^{-1}(s)\right)}}]{0234}\right).
\end{equation}
This can be proved in exactly the same way as the corresponding statement in the proof of Lemma~\ref{lem:1-semicategory-semisegal}.

The next step, however, requires some new calculation.  We need to show that the type~\eqref{eq:raw-pentagon} is equivalent to that of the pentagon $\Penta$ in~\eqref{eq:type-of-penta}.

So, let $s : \spine 4$, and again set $y \defeq \phi_4^{-1}(s)$.  Just like in the proof of Lemma~\ref{lem:1-semicategory-semisegal}, we can go through the list of horn fillers contained in the decomposition of $\Lambda^4_1$ as listed in~\eqref{eq:decomposing-Lambda41}, and compute the following values. This time, we omit all the components which are simply $\refl$.  We will write $y|ijkl$ for indices $0 \leq i < j < k < l \leq 4$ to denote the $\Lambda^3_1$-horn $(\horntuple[y]{ijkl}{j})$.
\begin{equation}
\begin{aligned}
  & y_{24} \equiv y_{34} \circ y_{23},
  \quad y_{13} \equiv y_{23} \circ y_{12},
  \quad y_{14} \equiv y_{24} \circ y_{12}, \\
  & y_{134} \equiv \widehat\alpha(y|1234) = \alpha(y_{12}, y_{23}, y_{34}), \\
  & y_{02} \equiv y_{12} \circ y_{01},
  \quad y_{03} \equiv y_{13} \circ y_{01},
  \quad y_{04} \equiv y_{14} \circ y_{01}, \\
  & y_{024} \equiv \widehat\alpha(y|0124) = \alpha(y_{01}, y_{12}, y_{24}), \\
  & y_{034} \equiv \widehat\alpha(y|0134) = \alpha(y_{01}, y_{13}, y_{34}) \ct
                                           \mapfunc {\_ \circ y_{01}} (y_{134}) \\
  & y_{023} \equiv \widehat\alpha(y|0123) = \alpha(y_{01}, y_{12}, y_{23}),
\end{aligned}
\end{equation}
The calculation now proceeds as follows:
\begin{equation}
\begin{alignedat}{2}
   && \quad & A_3(\bound[y]{0234}) \\
   & \cong && \left( y_{034} = \widehat\alpha(y|0234) \right) \\
   & \cong && \left( y_{034} =
        \mapfunc {y_{34} \circ \_} (y_{023}^{-1}) \ct
        \alpha(y_{02}, y_{23}, y_{34}) \ct
        \mapfunc {\_ \circ y_{02}} (y_{234}) \ct
        y_{024} \right) \\
    & \cong && \Big(\alpha(y_{01}, y_{13}, y_{34}) \ct \mapfunc {\_ \circ y_{01}} (y_{134}) = \\
    &&& \qquad
        \mapfunc {y_{34} \circ \_} (\alpha(y_{01}, y_{12}, y_{23})^{-1}) \ct
        \alpha(y_{02}, y_{23}, y_{34}) \ct
        \alpha(y_{01}, y_{12}, y_{24}) \Big) \\
    & \cong && \Big( \alpha(y_{01}, y_{12}, y_{23} \circ y_{34}) \ct
    \alpha(y_{12} \circ y_{01}, y_{23}, y_{34})
    = \\
    &&& \qquad \mapfunc {\_ \circ y_{01}} (\alpha(y_{12}, y_{23}, y_{34})) \ct
    \alpha(y_{01}, y_{23} \circ y_{12}, y_{34}) \ct
    \mapfunc {y_{34} \circ \_} (\alpha(y_{01}, y_{12}, y_{23})) \Big). 
\end{alignedat} 
\end{equation}

\vspace{-\the\baselineskip}
\vspace{-0.3cm}
\end{proof}

\vspace{0.1cm}

\section{Identity and Degeneracy Structure} \label{sec:identity-and-degeneracy}

After the detailed discussion on composition structures in the previous section, we are ready to talk about identities.
As before, we consider both the ``standard'' presentation and an encoding via semi-Segal types.

\subsection{Identities for Wild Semicategories}

\begin{definition}[identities for wild structures] \label{def:id-structure}
 We equip the structures of Definition~\ref{def:graphs-1-to-4} with identities as follows:
 \begin{enumerate}
  \item A \emph{reflexive-transitive graph} is a transitive graph $(\Ob, \Hom, \_ \circ \_)$ together with a family $\Ids : \Pi x, \Hom(x,x)$.
  \item A \emph{wild precategory} is a wild semicategory $(\Ob, \Hom, \_ \circ \_, \alpha)$, equipped with $\Ids$ as above and equalities $\lambda : \Pi xy, (f : \Hom(x,y)), \Ids_y \circ f = f$ as well as $\rho : \Pi xy, (f : \Hom(x,y)), f \circ \Ids_x = f$.
  \item A \emph{wild 2-precategory} is a wild 2-semicategory $(\Ob, \Hom, \Comp, \alpha, \Penta)$ together with $\Ids$, $\lambda$, $\rho$ as above, an equality 
  \begin{equation} \label{eq:id-triangle-1}
   t_1: \; \Pi xyz, (f : \Hom(x,y)), (g : \Hom(y,z)), \; 
   \mapfunc {g \circ \_} (\lambda_f)
   =
   \alpha_{f,\Ids,g} \ct \mapfunc {\_ \circ f} (\rho_g)
  \end{equation}
  which can be pictured as:
  \begin{equation}  \label{eq:t1-diagram-picture}
  \begin{tikzpicture}[x=2.2cm,y=-1.0cm, baseline=(current bounding box.center)]
   \node (T1) at (0,0) {$g \circ (\Ids \circ f)$};
   \node (T2) at (2,0) {$(g \circ \Ids) \circ f$};
   \node (T3) at (1,1) {$g \circ f$};
   
   \draw[->] (T1) to node [above] {\scriptsize $\alpha_{f,\Ids,g}$} (T2);
   \draw[->] (T1) to node [below left] {\scriptsize $\mapfunc {(g \circ \_)} (\lambda_f)$} (T3);
   \draw[->] (T2) to node [below right] {\scriptsize $\mapfunc {\_ \circ f} (\rho_g)$} (T3);
  \end{tikzpicture}
  \end{equation}
  and two equalities
  \begin{align}
   &t_0 : \;   \Pi xyz, (f : \Hom(x,y)), (g : \Hom(y,z)), \; 
   \lambda_{g \circ f}
   =
   \alpha_{f,g,\Ids} \ct \mapfunc {\_ \circ f} (\lambda_g)
   \label{eq:id-triangle-0}\\
   &t_2 : \;   \Pi xyz, (f : \Hom(x,y)), (g : \Hom(y,z)), \; 
   \mapfunc {g \circ \_} (\rho_f)
   = 
   \alpha_{\Ids,f,g} \ct \rho_{g \circ f}
   \label{eq:id-triangle-2} 
  \end{align}
  which can be represented as follows:
  \begin{equation}
  \begin{tikzpicture}[x=2cm,y=-1.0cm, baseline=(current bounding box.center)]
   \node (T1) at (0,0) {$\Ids \circ (g \circ f)$};
   \node (T2) at (2,0) {$(\Ids \circ g) \circ f$};
   \node (T3) at (1,1) {$g \circ f$};
   
   \draw[->] (T1) to node [above] {\scriptsize $\alpha_{f,g,\Ids}$} (T2);
   \draw[->] (T1) to node [below left] {\scriptsize $\lambda_{g \circ f}$} (T3);
   \draw[->] (T2) to node [below right] {\scriptsize $\mapfunc {\_ \circ f} (\lambda_g)$} (T3);

   \node (T1B) at (3.2,0) {$g \circ (f \circ \Ids)$};
   \node (T2B) at (5.2,0) {$(g \circ f) \circ \Ids$};
   \node (T3B) at (4.2,1) {$g \circ f$};
   
   \draw[->] (T1B) to node [above] {\scriptsize $\alpha_{\Ids,f,g}$} (T2B);
   \draw[->] (T1B) to node [below left] {\scriptsize $\mapfunc {g \circ \_} (\rho_f)$} (T3B);
   \draw[->] (T2B) to node [below right] {\scriptsize $\rho_{g \circ f}$} (T3B);
  \end{tikzpicture}
  \end{equation}
 \end{enumerate}
\end{definition}

\begin{remark} \label{rem:two-triangles-too-much}
 One could argue that the last two components (\ref{eq:id-triangle-0},~\ref{eq:id-triangle-2}) should not be part of the definition of a wild 2-precategory.
 If one checks the definition of a (weak) 2-category in a textbook on category theory, one will most likely \emph{not} encounter these two triangles.
 The reason is that they can be derived from the other data (which includes the ``middle'' triangle~\eqref{eq:id-triangle-1}) and thus do not need to be listed separately.
 For us, the situation is different since we are not stating \emph{laws} but \emph{structure}: although the two triangles in question can be derived in our setting as well (see Lemma~\ref{lem:id-triangles-01-derivable}), the \emph{type} of wild 2-precategories with those triangles is not necessarily equivalent to the one without them.
 
 Our recipe for determining the components of wild $n$-semicategories and precategories (for small $n$) is to start writing down the (infinite) composition and identity structure, but simply stop and ``cut off'' everything above the corresponding level.
 If we look at the definition of a tricategory~\cite[p.~25f]{nick:tricats}, the two triangles~(\ref{eq:id-triangle-0},~\ref{eq:id-triangle-2}) are part of the definition, and they are \emph{at the same level as~\eqref{eq:id-triangle-1}}; hence, it feels correct to us to include them here.
 
 Having given this argument, it does ultimately not matter whether we include the triangles.
 When we pass from \emph{wild} structure (which in the end is mostly an auxiliary concept) to well-behaved structure, we will ask for a truncation condition which ensures that~(\ref{eq:id-triangle-0},~\ref{eq:id-triangle-2}) are propositions.
 Thus, for $2$-categories, it will be the case that the type with those triangles is equivalent to the type without those triangles, 
 i.e.\ we could omit them for the same reason as they are omitted in set-based presentations of 2-categories.
\end{remark}

We further want to mention that we get a coherence condition between $\alpha$ and $\lambda$, $\rho$ automatically,
just as it is the case in standard 2-category theory. 
In a nutshell, $\alpha$ commutes with $\lambda$ or $\rho$.
In type theory, this is extremely simple:
\begin{auxlemma*}
 Given composable morphisms $f$, $g$, $h$,
 the following square commutes up to homotopy:
 \begin{equation}
\begin{tikzpicture}[x=4cm,y=-2.7cm, baseline=(current bounding box.center)]
\node (UL) at (0,0) {$(h \circ (\Ids \circ g)) \circ f$}; 
\node (UR) at (1,0) {$h \circ ((\Ids \circ g) \circ f)$}; 
\node (LL) at (0,1) {$(h \circ g) \circ f$}; 
\node (LR) at (1,1) {$h \circ (g \circ f)$}; 

\draw[->] (UL) to node [above] {\scriptsize $\alpha_{f, (\Ids \circ g), h}$} (UR);
\draw[->] (LL) to node [below] {\scriptsize $\alpha_{f, g, h}$} (LR);

\draw[->] (UL) to node [left] {\scriptsize $\mapfunc {\_ \circ f} (\mapfunc {h \circ \_} (\lambda_g))$} (LL);
\draw[->] (UR) to node [right] {\scriptsize $\mapfunc {h \circ \_} (\mapfunc {\_ \circ f} (\lambda_g))$} (LR);
\end{tikzpicture}
\end{equation}
Analogously, the diagram we get by swapping $g$ and $\Ids$ and using $\rho_g$ instead of $\lambda_g$ commutes, 
as well as the similar squares that pair $\Ids$ with $f$ or $h$ instead of $g$.
\end{auxlemma*}
\begin{proof}
 We simply formulate the statement with $\Ids \circ g$ replaced by a generic morphism $g'$, and with $\lambda_g$ replaced by any proof $p : g' = g$, and do path induction on $p$.
\end{proof}

\begin{lemma} \label{lem:id-triangles-01-derivable}
 Given a wild 2-precategory without the two components~(\ref{eq:id-triangle-0},~\ref{eq:id-triangle-2}), 
 elements of these components are derivable.
\end{lemma}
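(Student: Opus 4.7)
The plan is to adapt the classical bicategorical coherence argument (due essentially to Kelly and Mac Lane) to our type-theoretic setting: the pentagonator $\Penta$, combined with the middle triangle $t_1$ and the naturality of $\alpha$ in each argument (in the style of the Auxiliary Lemma just established), suffices to pin down $t_0$ and $t_2$ up to propositional equality.

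Concretely, to derive $t_0$ I would apply $\Penta$ to the four morphisms $(f, g, \Ids_c, k)$ for a parameter $k$. The resulting equation is a relation among five $\alpha$-terms. Three of them---$\alpha(f, g, \Ids)$, $\alpha(g \circ f, \Ids, k)$, and $\alpha(g, \Ids, k)$---have $\Ids$ in the middle slot, so $t_1$ applies directly: the latter two can be rewritten in terms of $\lambda_{g \circ f}$ and $\lambda_g$ together with whiskerings of $\rho_k$, while the first is the quantity we want to extract. The remaining two $\alpha$-terms $\alpha(f, \Ids \circ g, k)$ and $\alpha(f, g, k \circ \Ids)$ contain $\Ids$ only implicitly, inside a composite; path induction on $\lambda_g$ (respectively $\rho_k$)---exactly as in the Auxiliary Lemma---reduces each of them to $\alpha(f, g, k)$ modulo whiskerings. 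After these substitutions, the two copies of $\alpha(f, g, k)$ and the matching whiskerings of $\rho_k$ cancel, and the remaining equation collapses to $t_0$. The derivation of $t_2$ is symmetric, using $\Penta$ applied to $(\Ids_a, f, g, h)$ with the roles of $\lambda$ and $\rho$ swapped (and the analogous $\rho$-version of the Auxiliary Lemma).

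The main obstacle will be the volume of path-algebraic bookkeeping rather than any single conceptual step. The pentagon equation is already a three-fold path composition with several whiskerings, and plugging in three uses of $t_1$ together with two naturality reductions produces a long chain of $\mapfunc{}$-terms applied to $\lambda$, $\rho$, and their inverses. Each rewrite is essentially a direct application of $t_1$ or path induction, but arranging them so that the cancellations become visible requires care, especially since the $t_1$ substitutions and the naturality reductions interact via whiskerings of $\rho_k$ that must be matched up on both sides. In a formalisation, the judgmental behaviour of path induction and the simplification $\mapfunc{\mathrm{id}}(\refl) \jdeq \refl$ absorbs most of this automatically; on paper, the cleanest presentation is to draw the situation as a single large pentagonal diagram whose unknown triangular face is forced to commute by the known commutativity of the remaining faces.
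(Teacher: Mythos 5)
Your overall route is the same as the paper's: instantiate the pentagonator at $(f,g,\Ids,k)$, rewrite the two $\alpha$-terms with $\Ids$ in the \emph{middle} argument slot via $t_1$, reduce $\alpha(f,\Ids\circ g,k)$ and $\alpha(f,g,k\circ\Ids)$ to whiskerings of $\alpha(f,g,k)$ by path induction on $\lambda_g$ and $\rho_k$ (the Auxiliary Lemma), and read off the remaining relation. (Minor slip: $\alpha(f,g,\Ids)$ does not have $\Ids$ in the middle slot --- it is the term governed by $t_0$, which is why it survives --- but you then treat it correctly as the quantity to be extracted.)

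There is, however, a genuine gap in your last step. After the cancellations, the residual equation is \emph{not} $t_0$ but $\mapfunc{k\circ\_}$ applied to $t_0$: every occurrence of $\alpha(f,g,\Ids)$, $\lambda_{g\circ f}$ and $\mapfunc{\_\circ f}(\lambda_g)$ in your rewritten pentagon sits under the whiskering $\mapfunc{k\circ\_}$. This is unavoidable for a simple reason of typing: with $f:\Hom(a,b)$, $g:\Hom(b,c)$, $k:\Hom(c,d)$, the entire pentagon is an equation between paths in $\Hom(a,d)$, whereas $t_0$ is an equation between paths in $\Hom(a,c)$; no amount of cancellation inside $\Hom(a,d)$ produces the latter. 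To strip off $\mapfunc{k\circ\_}$ you need $(k\circ\_)$ to be an embedding, which a wild 2-precategory does not provide for general $k$. The paper's fix is to specialise $k\defeq\Ids$: the function $(\Ids\circ\_)$ is pointwise equal to the identity by $\lambda$, hence (by function extensionality) equal to it and therefore an equivalence, so $\mapfunc{\Ids\circ\_}$ is an equivalence and may be cancelled, yielding $t_0$. With that one additional observation your argument goes through, and the symmetric argument with $\Penta(\Ids_a,f,g,h)$ and $h\defeq\Ids$ gives $t_2$ as you indicate.
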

\begin{proof}
Consider the following diagram:

\newdimen\R
\newdimen\S
\R=0.4\textwidth
\S=0.08\textwidth

\newcommand{\rot}{-54}

\begin{equation} \label{eq:3-coherator}
\begin{tikzpicture}[baseline=(current bounding box.center)]
\node (P1) at (\rot-0:\R) {\fbox{$((h \circ \Ids) \circ g) \circ f$}}; 
\node (P2) at (\rot-72:\R) {\fbox{$(h \circ (\Ids \circ g)) \circ f$}}; 
\node (P3) at (\rot-144:\R) {\fbox{$h \circ ((\Ids \circ g) \circ f)$}}; 
\node (P4) at (\rot-216:\R) {\fbox{$h \circ (\Ids \circ (g \circ f))$}}; 
\node (P5) at (\rot-288:\R) {\fbox{$(h \circ \Ids) \circ (g \circ f)$}}; 

\node (P6) at (\rot-36:\S) {\fbox{$(h \circ g) \circ f$}};
\node (P7) at (\rot-216:\S) {\fbox{$h \circ (g \circ f)$}};

\draw[<-, tikzshortarrow] (P1) to node [below] {\scriptsize $\mapfunc {\_ \circ f} (\alpha(g, \Ids, h))$} (P2);
\draw[<-, tikzshortarrow] (P2) to node [left] {\scriptsize $\alpha(f, \Ids \circ g, h)$} (P3);
\draw[<-, tikzshortarrow] (P1) to node [right] {\scriptsize $\alpha(f, g, h \circ \Ids)$} (P5);
\draw[<-, tikzshortarrow] (P5) to node [above right] {\scriptsize $\alpha(g \circ f, \Ids, h)$} (P4);
\draw[->, tikzshortarrow] (P4) to node [above left] {\scriptsize $\mapfunc {h \circ \_} (\alpha(f,g,\Ids))$} (P3);

\draw[<-, tikzshortarrow] (P6) to node [right] {\scriptsize $\alpha(f,g,h)$} (P7);

\draw[->, tikzshortarrow] (P1) to node [right, near end] {\scriptsize $\mapfunc {\_ \circ f} (\mapfunc {\_ \circ g} (\rho_h))$} (P6);
\draw[->, tikzshortarrow] (P2) to node [left, near end] {\scriptsize $\mapfunc {\_ \circ f} (\mapfunc {h \circ \_} (\lambda_g))$} (P6);
\draw[->, tikzshortarrow] (P3) to node [below] {\scriptsize $\mapfunc {h \circ \_} (\mapfunc {\_ \circ f} (\lambda_g))$} (P7);
\draw[->, tikzshortarrow] (P4) to node [right] {\scriptsize $\mapfunc {h \circ \_} (\lambda_{g \circ f}))$} (P7);
\draw[->, tikzshortarrow] (P5) to node [above] {\scriptsize $\mapfunc {\_ \circ (g \circ f)} (\rho_h)$} (P7);
\end{tikzpicture}
\end{equation}
In this diagram, the two quadrangles commute up to homotopy by the auxiliary lemma above.
The top right diagram commutes due to~\eqref{eq:id-triangle-1}, applying $\mapfunc {h \circ \_}$ on the proof and using functoriality of $\mapfunc{}$~\cite[Lemma~2.2.2(i)]{hott-book}.
In the same way, the bottom triangle commutes, and so does the outermost pentagon thanks to $\Penta$.
Thus, we get commutativity of the top left triangle.
This triangle \emph{nearly} shows the equality~\eqref{eq:id-triangle-0}, but not quite, because $\mapfunc {h \circ \_}$ is applied everywhere.
Let us choose $h \defeq \Ids$.
The function $(\Ids \circ \_)$ is equal to the identity, hence an equivalence.
Therefore, $\mapfunc {\Ids \circ \_}$ is an equivalence as well, which proves~\eqref{eq:id-triangle-0}.
The argument for~\eqref{eq:id-triangle-2} is completely analogous.
\end{proof}

\begin{remark}
 If we were to define a 3-categorical structure, the diagram~\eqref{eq:3-coherator} would be precisely one of the coherators which we would need to connect $t_0$ and $t_1$.
 A version of it can found among the axioms of a 3-category in~\cite[p.~26]{nick:tricats}.
\end{remark}

\subsection{Degeneracies in Semisimplicial Types}

Recall that a simplicial set
can be described as a family $(X_n)_{n \in \mathbb N}$ of sets, together with face maps $d^n_i :X_n \to X_{n-1}$ (where $n \geq 1$ and $0 \leq i \leq n$) and degeneracy maps $s^n_i : X_n \to X_{n+1}$ (where $n \geq 0$ and $0 \leq i \leq n$), 
such that the following so-called \emph{simplicial identities} are satisfied:
\begin{align}
  & d^{n+1}_i \circ d^n_j = d^{n+1}_{j-1} \circ d^n_i && \text{for } i < j   \label{eq:simpli-id-d-d} \\
  & d^n_i \circ s^{n-1}_j = s^n_{j-1} \circ d^{n+1}_i && \text{for } i < j \label{eq:simpli-id-s-d-1} \\
  & d^n_i \circ s^{n-1}_j = \mathsf{id} && \text{for } i = j \text{ or } i = j + 1 \label{eq:simpli-id-s-d-2} \\
  & d^n_i \circ s^{n-1}_j = s^n_j \circ d^{n+1}_{i-1} && \text{for } i > j + 1 \label{eq:simpli-id-s-d-3} \\
  & s^n_i \circ s^{n+1}_j = s^n_{j+1} \circ s^{n+1}_i && \text{for } i \leq j \label{eq:simpli-id-s-s}
\end{align}
In our setting, we have already discussed the face maps in Section~\ref{sec:semisimplicialtypes}.
Since the face maps are simply projections, the identity~\eqref{eq:simpli-id-d-d} turns out to hold judgmentally.
As our next step, we want to define the notion of a degeneracy structure for a given semisimplicial type $(A_0, \ldots, A_n)$.
We would like to make the simplicial identities hold judgmentally, but since we cannot express this condition as a type, it has to follow from a suitable encoding.
We cannot define a structure of degeneracy maps which makes \emph{all} the simplicial identities hold, but fortunately, this is not a problem:
the last equation~\eqref{eq:simpli-id-s-s} bears no importance for our further plans.

Assume we have defined $s^j_i$ for $j < n$.
Observe that the equations (\ref{eq:simpli-id-s-d-1},~\ref{eq:simpli-id-s-d-2},~\ref{eq:simpli-id-s-d-3}) 
already determine the complete boundary of the (n+1)-dimensional tetrahedron $s^n_i(x)$.
Thus, the strategy for ensuring that (\ref{eq:simpli-id-s-d-1},~\ref{eq:simpli-id-s-d-2},~\ref{eq:simpli-id-s-d-3}) hold is that we do not simply ask for 
functions $\fulltetra n \to \fulltetra {{n+1}}$, but instead for dependent functions that only choose a filler for the appropriate boundary.
We are only interested in the cases $n \in \{0,1,2\}$, for which this can be stated explicitly as follows:

\begin{definition}[degeneracy structure] \label{def:degeneracy-structure}
 Assume we have a semisimplicial type $(A_0, \ldots, A_n)$. In the following, we assume that $n$ is at least $1$ (or $2$ or $3$, respectively).
 \begin{enumerate}
  \item A \emph{1-degeneracy structure} on the semisimplicial type is simply a function
  \begin{alignat}{2}
   &s^0_0 : \mathrlap{(x_0 : A_0) \to A_1(x_0, x_0).}
  \intertext{\item A \emph{2-degeneracy structure} is $s_0^0$ as above together with
  functions $s^1_0$ and $s^1_1$ of the following types:}
   & s^1_0 : \mathrlap{((x_0, x_1, x_{01}) : \fulltetra{1}) \to A_2(\bound[u]{012})} \\
   & \qquad \textit{where} && u_0 \defeq u_1 \defeq x_0; \, u_2 \defeq x_1; \, u_{01} \defeq s^0_0(x_0); \, u_{02} \defeq u_{12} \defeq x_{01}  \notag \\
   & s^1_1 : \mathrlap{((x_0, x_1, x_{01}) : \fulltetra{1}) \to A_2(\bound[u]{012})} \\
   & \qquad \textit{where} && u_0 \defeq x_0; \, u_1 \defeq u_2 \defeq x_1; \, u_{01} \defeq u_{02} \defeq x_{01}; \, u_{12} \defeq s^0_0(x_1). \notag \\
  \intertext{\item A \emph{3-degeneracy structure} is $s_0^0$, $s^1_0$, $s^1_1$ as above together with functions $s^2_0$, $s^2_1$, $s^2_2$ as follows: }
   & s^2_0 : \mathrlap{(\tetratuple{012}) : \fulltetra{2}) \to A_3(\bound[u]{0123})}  \\
   & \qquad \textit{where} &\quad & u_0 \defeq u_1 \defeq x_0; \, u_2 \defeq x_1; \, u_3 \defeq x_2; \, \notag \\
      &&& u_{01} \defeq s^0_0(x_0); \, u_{02} \defeq u_{12} \defeq x_{01}; \, u_{03} \defeq u_{13} \defeq x_{02}; \, u_{23} \defeq x_{12}; \, \notag \\
      &&& u_{012} \defeq s^1_0(\tetratuple{01}); \, u_{013} \defeq s^1_0(\tetratuple{02}); \, u_{023} \defeq x_{012} \notag \\
   & s^2_1 : \mathrlap{(\tetratuple{012}) : \fulltetra{2}) \to A_3(\bound[u]{0123})} \\
   & \qquad \textit{where} &\quad & u_0 \defeq x_0; \, u_1 \defeq u_2 \defeq x_1; \, u_3 \defeq x_2; \, \notag \\
      &&& u_{01} \defeq u_{02} \defeq x_{01}; \, u_{12} \defeq s^0_0(x_1); \, u_{03} \defeq x_{02}; \, u_{13} \defeq u_{23} \defeq s^0_0(x_1); \, \notag \\
      &&& u_{012} \defeq s^1_1(\tetratuple{01}); \, u_{013} \defeq u_{023} \defeq x_{012}; \, u_{123} \defeq s^1_0(\tetratuple{12}) \notag \\
   & s^2_2 : \mathrlap{(\tetratuple{012}) : \fulltetra{2}) \to A_3(\bound[u]{0123})}  \\
   & \qquad \textit{where} &\quad & u_0 \defeq x_0; \, u_1 \defeq x_1; \, u_2 \defeq u_3 \defeq x_2; \, \notag \\
      &&& u_{01} \defeq x_{01}; \, u_{02} \defeq u_{03} \defeq x_{02}; \, u_{12} \defeq u_{13} \defeq x_{12}; \, u_{23} \defeq s^0_0(x_2); \, \notag \\
      &&& u_{012} \defeq u_{013} \defeq x_{012}; \, u_{023} \defeq s^1_1(\tetratuple{02}); \, u_{123} \defeq s^1_1(\tetratuple{12}).\notag 
  \end{alignat}
 \end{enumerate}
\end{definition}

\begin{definition}[semi-Segal type with degeneracies] \label{def:semi-segal-with-deg}
 For $n \in \{2,3,4\}$, we say that an \emph{$n$-restricted semi-Segal type with degeneracies} is an $n$-restricted semi-Segal type with $(n-1)$-degeneracy structure.
\end{definition}

\begin{remark} \label{rem:deg-structure-one-lower}
 It is intentional that the degeneracy structure is of a lower level than the semi-Segal strucure. For example, a $3$-restricted semi-Segal type with degeneracies has a family $A_3$ of which the degeneracy structure does not make use.
 Intuitively, this is because the highest piece of identity structure should only make use of the second-highest piece of composition structure: 
 In Definition~\ref{def:id-structure}, we see that the coherators of identity~(\ref{eq:id-triangle-0},~\ref{eq:id-triangle-1},~\ref{eq:id-triangle-2}) do make use of the associator, but not of its coherator (the pentagon).
 This will become clearer in Theorem~\ref{thm:identities-degeneracies} below.
\end{remark}

\subsection{Correspondence between Identities and Degeneracies}
\label{sec:id-goes-to-deg}

The following lemma makes use of the fact that an $n$-degeneracy structure already makes sense for an $n$-restricted semi-Segal type (as suggested in Remark~\ref{rem:deg-structure-one-lower}).

\begin{lemma} \label{lem:id-goes-to-deg}
 Under the equivalence
 constructed in Theorem~\ref{thm:wild-semi}, the identity structure on a
 reflexive-transitive graph (a wild semicategory, a wild 2-semicategory) gets
 mapped to a $1$ ($2$, $3$)-degeneracy structure of a $1$ ($2$, $3$)-restricted semi-Segal type.
\end{lemma}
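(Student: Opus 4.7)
Plan: The proof proceeds case by case for $n\in\{1,2,3\}$. In each case we construct, fibrewise over the equivalence of Theorem~\ref{thm:wild-semi}, an equivalence between the extra identity structure (as listed in Definition~\ref{def:id-structure}) and the $n$-degeneracy structure (Definition~\ref{def:degeneracy-structure}). Each of these fibrewise equivalences will be a direct calculation using the explicit boundary data in Definition~\ref{def:degeneracy-structure} combined with the normalised descriptions of $A_2$ and $A_3$ supplied by Corollary~\ref{cor:composition} and Lemma~\ref{lem:1-general-assoc}.

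At $n=1$ essentially nothing has to be done: the equivalence of Theorem~\ref{thm:wild-semi} identifies $\Ob$ and $\Hom$ with $A_0$ and $A_1$ judgmentally, so the family $\Ids : \Pi x, \Hom(x,x)$ already has the type of $s^0_0 : (x_0 : A_0) \to A_1(x_0, x_0)$, and vice versa.

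At $n=2$, Corollary~\ref{cor:composition} provides the judgmental identification $A_2(\bound[x]{012}) \equiv (x_{12}\circ x_{01} = x_{02})$. Unfolding the boundaries prescribed for $s^1_0$ and $s^1_1$ in Definition~\ref{def:degeneracy-structure} against this identification, their codomains reduce respectively to $x_{01}\circ \Ids_{x_0} = x_{01}$ and $\Ids_{x_1}\circ x_{01} = x_{01}$, i.e.\ to the types of the components of $\rho$ and $\lambda$ in Definition~\ref{def:id-structure}. The fibrewise map therefore simply identifies $(s^0_0, s^1_0, s^1_1)$ with $(\Ids, \rho, \lambda)$; since the equivalence of Lemma~\ref{lem:0-semicategory-semisegal} is a re-association of $\Sigma$-types, this identification is its own inverse with no further coherences to check.

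At $n=3$, we work with the further normalisation $A_3(u, f) \equiv (\widehat\alpha(u) = f)$ fixed after Lemma~\ref{lem:1-general-assoc}. For each of $s^2_0, s^2_1, s^2_2$ one substitutes the prescribed boundary components -- the values $u_{ij}$ equal to $\Ids$ and the triangle fillers $u_{ijk}$ equal to (copies of) $\rho$ or $\lambda$ obtained via the already identified $s^1_0, s^1_1$ -- into the formula for $\widehat\alpha$. Using the elementary laws $\mapfunc{f}(\refl) = \refl$ and $\refl \ct p = p = p \ct \refl$, the substituted expression collapses, and the required equation $\widehat\alpha(u) = u_{023}$ (or its analogue for $s^2_1, s^2_2$) reduces to exactly one of the triangle coherences $t_0$, $t_1$, $t_2$ from~(\ref{eq:id-triangle-0},~\ref{eq:id-triangle-1},~\ref{eq:id-triangle-2}). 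The matching is dictated by which pair of adjacent vertices collapses: $s^2_1$, the symmetric case where the middle vertex is duplicated, produces the middle triangle $t_1$, while $s^2_0$ and $s^2_2$ produce $t_0$ and $t_2$. The resulting map is a fibrewise equivalence because each side comes out as a singleton-like type of a chosen equation.

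The main obstacle lies in the $n=3$ case: every $s^2_i$ has four triangle fillers in its boundary, some copies of the original $x_{012}$ and others produced from $s^1_0$ or $s^1_1$ (themselves translated to $\rho$ and $\lambda$), and all must be simultaneously plugged into $\widehat\alpha$ and simplified. The calculation is mechanical but bookkeeping-intensive, and is most naturally carried out inside the Agda formalisation, where the judgmental content of the preceding equivalences (notably the judgmental form of Corollary~\ref{cor:composition}) lets many of the intermediate $\refl$'s be discharged automatically.
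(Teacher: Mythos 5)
Your proposal is correct and follows essentially the same route as the paper: the $n=1$ and $n=2$ cases are read off directly from Corollary~\ref{cor:composition}, and the $n=3$ case is handled by substituting the prescribed degenerate boundary into the normalised form $A_3(u,f)\equiv(\widehat\alpha(u)=f)$ of Lemma~\ref{lem:1-general-assoc} and simplifying (the paper additionally pre-composes with the spine equivalence $\fulltetra 2\simeq\spine 2$ so that the input filler becomes $\refl$, which is the same simplification you perform by cancelling the generic $x_{012}$). One small correction: because the spine orders edges left-to-right while composition is written right-to-left, the pairing of the outer cases is crossed --- $s^2_0$ (duplicating the source vertex, hence inserting $\Ids$ as $u_{01}$) yields the $\rho$-triangle~\eqref{eq:id-triangle-2}, and $s^2_2$ yields the $\lambda$-triangle~\eqref{eq:id-triangle-0} --- rather than matching indices as your phrasing suggests; this does not affect the argument, since both cases are established either way.
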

Lemma~\ref{lem:id-goes-to-deg} immediately implies the following connection:
\begin{theorem} \label{thm:identities-degeneracies}
 The structures in Definition~\ref{def:id-structure} are equivalent to the structures in Definition~\ref{def:semi-segal-with-deg}:
 \begin{enumerate}
  \item The type of reflexive-transitive graphs is equivalent to the type of $2$-restricted semi-Segal types with degeneracies.
  \item The type of wild precategories is equivalent to the type of $3$-restricted semi-Segal types with degeneracies.
  \item The type of wild $2$-precategories is equivalent to the type of $4$-restricted semi-Segal types with degeneracies. \qed
 \end{enumerate}
\end{theorem}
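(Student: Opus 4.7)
The plan is to reduce this theorem to Lemma~\ref{lem:id-goes-to-deg}, which carries the real content, and then sketch how that lemma is established. Each part of the theorem asserts an equivalence of $\Sigma$-types: the left-hand side is a base categorical structure together with an identity structure on it, and the right-hand side is a restricted semi-Segal type at the matching level together with a degeneracy structure. Theorem~\ref{thm:wild-semi} already provides an equivalence between the two base types. Since $\Sigma$ preserves fibrewise equivalences, the theorem reduces to showing that, under the equivalence of Theorem~\ref{thm:wild-semi}, the type of identity structures on the left is equivalent to the type of degeneracy structures on the right---which is precisely what Lemma~\ref{lem:id-goes-to-deg} asserts.

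For the proof of Lemma~\ref{lem:id-goes-to-deg}, I would leverage the very explicit descriptions of $A_2$ and $A_3$ obtained in Section~\ref{sec:composition-and-horns}. By Corollary~\ref{cor:composition}, $A_2(\bound[u]{012})$ is judgmentally $u_{12} \circ u_{01} = u_{02}$; and by (a specialisation of) Lemma~\ref{lem:1-general-assoc}, an element of $A_3$ on a given boundary unfolds to a concrete equation between path composites involving the associator and maps of the form $\mapfunc{g \circ \_}$ and $\mapfunc{\_ \circ f}$. Substituting the specific boundaries prescribed in Definition~\ref{def:degeneracy-structure} into these descriptions yields, for each degeneracy map, a type that matches on the nose a component of Definition~\ref{def:id-structure}. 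Explicitly: $s^0_0$ is literally $\Ids$; the pair $(s^1_0, s^1_1)$ becomes the pair of unitors $(\rho, \lambda)$; and the triple $(s^2_0, s^2_1, s^2_2)$ becomes the triple of identity triangles $(t_0, t_1, t_2)$ (up to reindexing).

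The main obstacle I expect is the bookkeeping in the $3$-degeneracy case. Each $s^2_i$ outputs an element of $A_3$ whose boundary contains entries that are themselves previously matched degeneracies, so under the translation they get replaced by $\Ids$, $\rho$, or $\lambda$; the resulting type then becomes an equality between concrete path composites which must be verified, case by case, to coincide with one of the triangles~\eqref{eq:id-triangle-0}--\eqref{eq:id-triangle-2}. This is straightforward but tedious, and relies on judgmental (not merely propositional) equalities holding so that the substituted lower-level degeneracies can serve as entries of the boundaries at higher levels---exactly the kind of verification where the Agda formalisation mentioned in the paper becomes a crucial sanity check. Once this matching is confirmed at each level, the three equivalences of the theorem follow directly from the $\Sigma$-decomposition above.
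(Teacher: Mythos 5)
Your proposal is correct and follows essentially the same route as the paper: the theorem is obtained as an immediate consequence of Lemma~\ref{lem:id-goes-to-deg} by extending the base equivalence of Theorem~\ref{thm:wild-semi} fibrewise, and that lemma is proved exactly as you describe --- Corollary~\ref{cor:composition} identifies $s^0_0$ with $\Ids$ and $(s^1_0, s^1_1)$ with the unitors, while the generalised-associator description of $A_3$ from Lemma~\ref{lem:1-general-assoc}, applied along the spine equivalence, turns $s^2_1$ into the triangle~\eqref{eq:id-triangle-1} (with $s^2_0$, $s^2_2$ handled analogously). Your remark about the computation being largely judgmental and checked in Agda matches the paper's own comment.
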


\begin{proof}[Proof of Lemma~\ref{lem:id-goes-to-deg}]
  The statement is completely obvious for $1$-restricted semi-Segal types, since
  a $1$-degeneracy structure is exactly the same thing as an identity structure
  for a transitive graph.

  As for $2$-restricted semi-Segal types, it follows immediately from
  Corollary~\ref{cor:composition} that the type of the function $s^1_0$ which is
  part of a $2$-degeneracy structure is mapped to the type of a right unitor
  $\rho$ for the corresponding wild precategory, and similarly the type of
  $s^1_1$ is mapped to that of a left unitor.

  The case of $3$-restricted semi-Segal types is more involved, but it
  essentially amounts to a straightforward type calculation 
  (much of which is judgmental and thus automatic in our Agda formalisation). 
  We will only deal
  with the correspondence of the intermediate degeneracy $s^2_1$ with the main
  triangular coherence of a wild 2-precategory~\eqref{eq:id-triangle-1}, as the two other ones are analogous.

  If $X$ is a 3-restricted semi-Segal type, the type of 2-simplices $\fulltetra{2}$
  is equivalent to the type of spines $\spine 2$ thanks to the Segal condition.
  By rewriting along this equivalence, we get that the type of $s^2_1$ is
  equivalent to the type of functions that map a spine determined by morphisms
  $f : \Hom(x, y)$ and $g : \Hom(y, z)$ into the tetrahedron $x : \fulltetra 3$,
  where $x_{012} \jdeq \lambda_f$, $x_{123} \jdeq \lambda_g$, and the other faces are
  reflexivity proofs. Here we are implicitly using
  Corollary~\ref{cor:composition} to identify a 2-dimensional simplex with an
  equality of morphisms.

  Applying the isomorphism~\eqref{eq:gen-associator-iso}, we can rewrite the
  type of this tetrahedron into the equation
  \begin{equation}
    \mapfunc{g \circ \_} (\lambda_f^{-1}) \ct
    \alpha(f, \Ids_y, g) \ct
    \mapfunc {\_ \circ f} (\lambda_g) \ct
    \refl = \refl,
  \end{equation}
  which can be equivalently expressed as
  \begin{equation}
    \alpha(f, \Ids_y, g) \ct
    \mapfunc {\_ \circ f} (\lambda_g)
    = \mapfunc{g \circ \_} (\lambda_f).
  \end{equation}
\end{proof}

\begin{corollary}\label{cor:deg-derivable}
  Assume we have a 4-restricted semi-Segal type with degeneracies $s^0_0$,
  $s^1_0$, $s^1_1$, and $s^2_1$. We can derive degeneracies $s^2_0$ and $s^2_2$.
\end{corollary}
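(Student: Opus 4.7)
My plan is to reduce the problem to the ``standard'' categorical side via the equivalence established in Theorem~\ref{thm:identities-degeneracies}, and then invoke Lemma~\ref{lem:id-triangles-01-derivable}. Concretely, by Theorem~\ref{thm:wild-semi}(4) the underlying $4$-restricted semi-Segal type corresponds to a wild $2$-semicategory $(\Ob, \Hom, \Comp, \alpha, \Penta)$; the key point is that we have the pentagonator $\Penta$ at our disposal, since we are given $A_4$ with its contractible horn filling $h_4$.

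Next, I would trace through the correspondence of Lemma~\ref{lem:id-goes-to-deg} componentwise: $s^0_0$ produces the family $\Ids$, $s^1_0$ produces the right unitor $\rho$, $s^1_1$ produces the left unitor $\lambda$, and $s^2_1$ produces the middle triangle coherence $t_1$ of~\eqref{eq:id-triangle-1}. The only mildly nontrivial part is to check that the analogue of the calculation performed at the end of the proof of Lemma~\ref{lem:id-goes-to-deg} also works for the remaining two degeneracies: applying the equivalence $\phi_3$ of Lemma~\ref{lem:phi1}, a function of type $s^2_0$ is equivalent to a function assigning, to each pair of composable morphisms $f,g$, a tetrahedron whose three non-$\refl$ faces are $\lambda_f$, $\refl$, $\lambda_{g\circ f}$; rewriting along the isomorphism~\eqref{eq:gen-associator-iso}, this is exactly the triangle $t_0$ in~\eqref{eq:id-triangle-0}. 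The symmetric calculation identifies $s^2_2$ with $t_2$ in~\eqref{eq:id-triangle-2}. These checks are routine, completely analogous to the case of $s^2_1$ already treated, and much of the bookkeeping is judgmental.

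At this point, the assumed data assembles into exactly the structure described in the hypothesis of Lemma~\ref{lem:id-triangles-01-derivable}: a wild $2$-precategory equipped with all its data except the two triangles $t_0$ and $t_2$. That lemma then produces $t_0$ and $t_2$, and translating back via the correspondences identified above yields the desired degeneracies $s^2_0$ and $s^2_2$.

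The step I expect to require the most care is the explicit identification of $s^2_0$ with $t_0$ and $s^2_2$ with $t_2$, since one must unfold the many face substitutions in Definition~\ref{def:degeneracy-structure} and check that, after applying the Segal equivalences and the generalised associator isomorphism~\eqref{eq:gen-associator-iso}, the resulting equation matches~\eqref{eq:id-triangle-0} respectively~\eqref{eq:id-triangle-2} on the nose (up to obvious rearrangement of $\mapfunc{}$ and inverses, as in the $s^2_1$ case). Everything else is then a purely formal consequence of Lemma~\ref{lem:id-triangles-01-derivable}, whose diagrammatic argument depends essentially on $\Penta$ and thus explains why the corollary is stated for $4$-restricted, rather than $3$-restricted, semi-Segal types.
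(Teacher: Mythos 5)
Your proposal takes exactly the paper's route: the paper's entire proof of this corollary is ``translate via Theorem~\ref{thm:identities-degeneracies}, then invoke Lemma~\ref{lem:id-triangles-01-derivable}'', and your componentwise tracing of the correspondence is precisely the content of Lemma~\ref{lem:id-goes-to-deg}. One small bookkeeping slip that does not affect the conclusion: since the degenerate edge of $s^2_0$ sits at position $01$ and its faces are built from $s^1_0$ (i.e.\ from $\rho$, not $\lambda$), it is $s^2_0$ that corresponds to $t_2$ in~\eqref{eq:id-triangle-2} and $s^2_2$ that corresponds to $t_0$ in~\eqref{eq:id-triangle-0}, the opposite of the assignment you wrote.
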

\begin{proof}
  Translating via Theorem~\ref{thm:identities-degeneracies}, this becomes the
  statement of Lemma~\ref{lem:id-triangles-01-derivable}.
\end{proof}

\subsection{Uniqueness of the Identity Structure}

Our usage of the attribute \emph{wild} in notions such as \emph{wild semicategory} indicates that higher levels of the structure are not ``controlled''; coherence is not guaranteed.
To give an example, the $\lambda$ and $\rho$ of a wild semicategory are not required to satisfy any further equality, as the corresponding rules are only added when considering a wild 2-semicategory.
The problems of wild structures are that they are not preserved under certain operations.
For example, given a wild semicategory and one of its objects, one can in general not perform a slice construction which produces a wild semicategory again.
This is essentially the same effect that is visible in the usual theory of bicategories, where the pentagon law is required to construct an associator for the slice bicategory.
In our type-theoretic setting, we can avoid wildness by requiring our structures to be truncated to ensure that all wanted equalities hold.
Thus, we define:

\begin{definition}[dropping wildness] \label{def:drop-wildness}
 We define the following ``non-wild'' structures:
 \begin{enumerate}
  \item A \emph{preordered set} is a reflexive-transitive graph where $\Hom(x,y)$ is a proposition for all objects $x,y$.
  \item A \emph{precategory} is a wild precategory where $\Hom(x,y)$ is always a set.
  \item A \emph{2-precategory} is a wild 2-precategory where $\Hom(x,y)$ is always a $1$-type.
 \end{enumerate}
\end{definition}

If we have one of the wild structures without identities from Definition~\ref{def:graphs-1-to-4} (e.g.\ a wild semicategory), 
it is not always be possible to equip it with an identity structure.
This is to be expected: for example, $\Hom$ could be the empty type everywhere.
Even if it is possible to find an identity structure, there is in general not a \emph{unique} way of doing it.
In other words, the type of identity structures is not a proposition.
However, it \emph{is} a proposition if the type of morphisms is truncated at an appropriate level.
This can be phrased as follows:
\begin{theorem} \label{thm:id-structure-unique}
 There is at most one way to extend one of the wild structures of Definition~\ref{def:graphs-1-to-4} (\ref{item:graphs-1-to-4-snd}-\ref{item:graphs-1-to-4-frth}) to the corresponding structure of Definition~\ref{def:drop-wildness}, in the following sense:
 \begin{enumerate} 
  \item Given a transitive graph, the structure needed to extend it to a preordered set inhabits a proposition. \label{item:is-prop-1}
  \item Similarly, for a wild semicategory, the structure which makes it a precategory inhabits a proposition. \label{item:is-prop-2}
  \item Given a wild 2-semicategory, the structure required for a 2-precategory inhabits a proposition. \label{item:is-prop-3}
 \end{enumerate}
\end{theorem}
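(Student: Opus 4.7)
I plan to treat the three items separately and exploit the truncation hypothesis in each. For (\ref{item:is-prop-1}), the extending datum is just $\Ids : \Pi a, \Hom(a,a)$, a $\Pi$-type into propositions, which is therefore a proposition. For (\ref{item:is-prop-2}), the unitor laws $\lambda_f : \Ids \circ f = f$ and $\rho_f : f \circ \Ids = f$ are equalities in sets and hence propositional; the extending type therefore has the form $\Sigma \Ids, P(\Ids)$ with $P$ propositional, so it suffices to show that $\Ids$ is uniquely determined. Given two candidates $(\Ids, \lambda, \rho)$ and $(\Ids', \lambda', \rho')$, the classical two-identities calculation supplies a path $(\rho'_{\Ids_a})^{-1} \ct \lambda_{\Ids'_a} : \Ids_a = \Ids'_a$ for each $a$ (the intermediate point being $\Ids_a \circ \Ids'_a$), and function extensionality yields $\Ids = \Ids'$.

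For (\ref{item:is-prop-3}), the coherences $t_0, t_1, t_2$ are equalities between paths in a 1-type and hence propositional, and $\Ids$ is unique by exactly the same argument as in (\ref{item:is-prop-2}). The new step is to show that, once $\Ids$ has been identified, the unitors $\lambda$ and $\rho$ are uniquely determined; they now live in families of sets (paths in a 1-type), so propositionality is not automatic, and an additional argument is needed.

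My preferred route is to translate via Theorem~\ref{thm:identities-degeneracies} into the semi-Segal degeneracy picture: the extending data becomes a 3-degeneracy structure on a 4-restricted semi-Segal type $(A_0, \ldots, A_4)$ whose $A_1$ is 1-truncated. Then by the descriptions in Corollary~\ref{cor:composition} and Lemma~\ref{lem:1-general-assoc}, $A_2$ is set-valued and $A_3$ is proposition-valued. The top-level degeneracies $s^2_0, s^2_1, s^2_2$ thus inhabit proposition-valued families and contribute no obstruction. To identify the middle-level degeneracies $s^1_0, s^1_1$ for two candidate structures sharing the same $s^0_0$, the strategy is to use that the $A_3$-valued constraints imposed by the $s^2_i$ match and that, combined with the contractibility of inner horn fillers from Lemma~\ref{lem:phi1}, they force the candidate $s^1_i$-values at any 1-simplex to land in a common propositional equality type in $A_2$. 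The remaining $s^2_i$ data then collapses by propositionality, and equality of the two full 3-degeneracy structures follows.

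The main obstacle will be the careful bookkeeping of the horn-filler identifications in the semi-Segal picture, and setting up the dictionary between the degeneracy formulation and the direct path-algebraic statement. A more concrete direct fallback applies $t_1$ at $g \jdeq \Ids$ to both candidate structures and uses that $(\Ids \circ \_)$ is an equivalence (witnessed by $\lambda$) to cancel $\mapfunc{\Ids \circ \_}$, reducing $\lambda_f = \lambda'_f$ to $\rho_{\Ids} = \rho'_{\Ids}$; the latter in turn is pinned down by $t_0$ and $t_2$ via the same pentagon-based coherence chased in Lemma~\ref{lem:id-triangles-01-derivable}. Either way, once $\lambda = \lambda'$ and $\rho = \rho'$ are established, the triangles $t_i$ agree by propositionality, giving the full equality of the two extending tuples.
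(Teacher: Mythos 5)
Your items (\ref{item:is-prop-1}) and (\ref{item:is-prop-2}) match the paper's proof essentially verbatim (the paper also folds the truncation witness itself into the extending data, but since that witness is a proposition this changes nothing). The problem is item (\ref{item:is-prop-3}), where you correctly isolate the crux --- uniqueness of $\lambda$ and $\rho$ once $\Ids$ is fixed --- but neither of your two routes actually closes it. Route A never engages with the real difficulty: after translating to degeneracies, two candidate values of $s^1_1$ at a given edge are two elements of an $A_2$-fibre, which by Corollary~\ref{cor:composition} is an equality type in a $1$-type, i.e.\ a \emph{set}, not a proposition; saying that the $s^2_i$ constraints ``force'' them into a common propositional type is exactly the statement to be proved, and no mechanism is given for producing the required path. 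Route B's first half is fine ($t_1$ at $g \jdeq \Ids$ plus the fact that $\mapfunc{\Ids \circ \_}$ acts as an equivalence on path types reduces $\lambda_f = \lambda'_f$ to $\rho_{\Ids} = \rho'_{\Ids}$), but the claim that $\rho_{\Ids} = \rho'_{\Ids}$ is ``pinned down by $t_0$ and $t_2$'' is unsubstantiated. Each structure's coherences~(\ref{eq:id-triangle-0})--(\ref{eq:id-triangle-2}) only relate that structure's own unitors to one another (the Kelly-style consequence $\lambda_{\Ids} = \rho_{\Ids}$, and self-referential equations such as $\mapfunc{\Ids \circ \_}(\rho_{\Ids}) = \alpha_{\Ids,\Ids,\Ids} \ct \mapfunc{\_ \circ \Ids}(\rho_{\Ids})$); they never compare the unprimed unitors with the primed ones, and a fixed-point equation of this kind does not single out a unique solution in a set.

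What is missing is precisely the anchoring device the paper uses. Fix one inhabitant $\mathbf c = (\Ids^c, \lambda^c, \rho^c, \dots)$ of the extension type and observe that $(\_ \circ \Ids^c_x)$ is an equivalence (being homotopic to the identity via $\rho^c$), so the type of pairs $(\Ids, \gamma)$ with $\gamma_x : \Ids_x \circ \Ids^c_x = \Ids^c_x$ is a singleton; adjoining the further contractible data $\tau : \Pi x,\, \lambda_{\Ids^c_x} = \gamma_x$ pins $\lambda$ at $\Ids^c$ to the canonical $\gamma$, after which $t_0$ with $g \defeq \Ids^c$ determines $\lambda_f$ for every $f$, and $t_1$ with $g \defeq \Ids^c$ then determines $\rho$. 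The point is that the comparison between two candidate structures is routed through cross-structure data ($\gamma$, $\tau$) living in a contractible total space, rather than through either structure's internal coherences alone. Without some such device your part (\ref{item:is-prop-3}) does not go through, so I would count this as a genuine gap rather than a presentational one.
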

\begin{proof}
 For~\eqref{item:is-prop-1}, observe that the additional structure needed is 
 \begin{equation}
  \left(\Pi x, \Hom(x,x)\right) \times \Pi xy, \isprop(\Hom(x,y)).
 \end{equation}
 The second factor is a proposition, and an inhabitant of it implies that the first factor is a proposition as well.
 
 Property~\eqref{item:is-prop-2} is not much harder. 
 The structure we need to add to a wild semicategory in order to obtain a wild precategory are $\Ids$, $\lambda$, $\rho$, and a proof that $\Hom$ is a family of sets.
 That any two instances $\Ids$, $\Ids'$ are equal follows as always in ordinary category theory when one wants to show uniqueness of identity morphisms,
 via
 \begin{equation}
  \Ids \stackrel{{{\lambda'_{\Ids}}^{-1}}}{=} {\Ids' \circ \Ids} \stackrel{\rho_{\Ids'}}{=} \Ids'. 
 \end{equation}
 The rest is an in~\eqref{item:is-prop-1}, as if $\Hom$ is a family of sets, then $\lambda$ and $\rho$ inhabit propositions.

 The final part~\eqref{item:is-prop-3} is a bit trickier. 
 In this case, the additional structure is the type of tuples $(\Ids, \lambda, \rho, t_1, t_0, t_2, h)$, where
 $\Ids$, $\lambda$, $\rho$ are as before, $t_1$ is a witness of the equality~\eqref{eq:id-triangle-1}, $t_0$, $t_2$ are equalities~(\ref{eq:id-triangle-0},~\ref{eq:id-triangle-2}), and $h$ states that $\Hom$ is a family of $1$-types.
 We call $T$ the type of such tuples. 

 Assume we are given a concrete fixed tuple $\mathbf c \defeq (\Ids^c, \lambda^c, \rho^c, t_1^c, t_0^c, t_2^c, h^c) : T$.
 We have to show that under this assumption $T$ is contractible.
 We will show that it is propositional, which suffices.
 The components $t_1$, $t_0$, and $t_2$ are all unproblematic since they inhabit propositions as witnessed by $h^c$, and the same it the case for $h$.
 Thus, we need to show that there is at most one triple $(\Ids, \lambda, \rho)$ such that the properties expressed by $t_1$, $t_0$, $t_2$, $h$ are satisfied.

 Consider the type of pairs $(\Ids, \gamma)$ as in
 \begin{align} 
  & \Ids : \Pi x, \Hom(x,x) \\
  & \gamma: \Pi x, \Ids_x \circ \Ids_x^c = \Ids_x^c.
 \end{align}
 Up to function extensionality and an application of the equivalence $(\_ \circ \Ids_x^c)$, the type of pairs $(\Ids, \gamma)$ is a singleton,
 and hence, there is exactly one such pair $(\Ids, \gamma)$.
 Given this pair, consider triples $(\lambda, \tau, t_0)$ where $\lambda$ and $t_0$ already have the required types and $\tau$ relates $\lambda$ with $\gamma$ as in
 \begin{align}
  & \lambda : \Pi xy, (f : \Hom(x,y)), \Ids_y \circ f = f \\
  & \tau : \Pi x, \lambda_{\Ids^c_x} = \gamma_x \\
  & t_0 :   \Pi xyz, (f : \Hom(x,y)), (g : \Hom(y,z)), \; 
   \lambda_{g \circ f}
   =
   \alpha_{f,g,\Ids} \ct \mapfunc {\_ \circ f} (\lambda_g).
 \end{align}
 We claim that there is at most one triple $(\lambda, \tau, t_0)$.
 If we have such a triple, let us consider the following diagram ($x$ is omitted for readability):
  \begin{equation}
  \begin{tikzpicture}[x=2.2cm,y=-1.5cm, baseline=(current bounding box.center)]
   \node (T1) at (0,0) {$\Ids \circ (\Ids^c \circ f)$};
   \node (T2) at (2,0) {$(\Ids \circ \Ids^c) \circ f$};
   \node (T3) at (1,1) {$\Ids^c \circ f$};
   
   \draw[->] (T1) to node [above] {\scriptsize $\alpha_{f,\Ids^c,\Ids}$} (T2);
   \draw[->] (T1) to node [below left] {\scriptsize $\lambda_{\Ids^c \circ f}$} (T3);
   \draw[->] (T2) to node [above left] {\scriptsize $\mapfunc {\_ \circ f} (\lambda_{\Ids^c})$} (T3);

   \draw[->, bend right=-30] (T2) to node [below right] {\scriptsize $\mapfunc {\_ \circ f} (\gamma)$} (T3);
  \end{tikzpicture}
  \end{equation}
 The triangle commutes thanks to $t_0$ (choosing $g \defeq \Ids^c$), and the two parallel arrows are equal as witnessed by $\tau$.
 Thus, we have
 \begin{equation}
  \lambda_{\Ids^c \circ f} \, = \, \alpha_{f, \Ids^c, \Ids} \ct \mapfunc{\_ \circ f}(\gamma),
 \end{equation}
 and because of $\gamma$ this means that $\lambda_f$ is uniquely determined.
 Due to the restricted truncation level of the family $A_1$, as witnessed by $h^c$, there can be at most one $\tau$ and $t_0$.
 
 Consider the type of pairs $(\rho, t_1)$ of the correct types (the type of $t_1$ is~\ref{eq:id-triangle-1}).
 Again, $t_1$ inhabits a proposition.
 If we look at the diagram~\ref{eq:t1-diagram-picture} which pictures the type of $t_1$, and set $g \defeq \Ids^c$, we see that $\rho$ can be written in terms of all the components that we have so far, and thus is determined as well.
 
 Summarised, we have shown that the type of tuples $(\Ids, \gamma, \lambda, \tau, t_0, \rho, t_1)$ is a proposition.
 By reordering, this type is equivalent to the type of tuples $(\Ids, \lambda, \rho, t_1, t_0, \gamma, \tau)$.
 Observe that the type of pairs $(\gamma, \tau)$ is a singleton and thus contractible no matter what the other components are. 
 Thus, the type of tuples $(\Ids, \lambda, \rho, t_1, t_0)$ is a proposition, and adding the component $t_2$ does not change this since $t_2$ inhabits a proposition as well. 
\end{proof}

Definition~\ref{def:drop-wildness} can be translated directly to the semisimplicial terminology:
\begin{definition}[semi-Segal $n$-type]
 For $n \in \{0,1,2\}$, a semi-Segal $n$-type is an $(n+2)$-restricted semi-Segal type $A$ where $A_1$ is a family of $(n-1)$-types.
\end{definition}
\begin{remark} \label{rem:level-is-trunc->higher-level-is-trunc}
 Given a semi-Segal $n$-type, it is easy to see that $A_i$ is a family of $(n-i)$-truncated types for $i \geq 1$.
 In particular, $A_{n+2}$ is a family of contractible types.
 Nevertheless, we do not want to remove this seemingly trivial level from the definition, as it ensures that the horn-filling/Segal-condition can be formulated uniformly. 
 
 The terminology \emph{semi-Segal $n$-type} comes from the fact that later, after adding completeness, the type $A_0$ will be $n$-truncated.
 At the moment, we cannot draw this conclusion, and $A_0$ could be anything.
\end{remark}

Theorem~\ref{thm:id-structure-unique} implies immediately:
\begin{corollary} \label{cor:truncation-deg-unique}
 Given an $n$-restricted semi-Segal type, where $A_1$ is a family of $(n-3)$-types, there is at most one way to equip it with a degeneracy structure for $n \in \{2,3,4\}$. \qed
\end{corollary}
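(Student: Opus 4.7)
The proof will be an immediate translation via the correspondences already established. The plan is to reduce Corollary~\ref{cor:truncation-deg-unique} to Theorem~\ref{thm:id-structure-unique} using Theorem~\ref{thm:identities-degeneracies}, after checking that the truncation hypothesis on $A_1$ matches the hypothesis on $\Hom$ required in the wild-side uniqueness statement.

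First, I would case-split on $n \in \{2,3,4\}$. For each case, Theorem~\ref{thm:wild-semi} gives an equivalence between $n$-restricted semi-Segal types and the corresponding wild semicategorical structure (transitive graph for $n=2$, wild semicategory for $n=3$, wild 2-semicategory for $n=4$), and under this equivalence $A_1(x,y)$ is identified with $\Hom(x,y)$. Therefore the assumption that $A_1$ is a family of $(n-3)$-types transports to the assumption that $\Hom$ is, respectively, a family of propositions, sets, or $1$-types, which is exactly the truncation condition appearing in the definition of a preordered set, precategory, or 2-precategory (Definition~\ref{def:drop-wildness}).

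Next, by Theorem~\ref{thm:identities-degeneracies}, equipping the $n$-restricted semi-Segal type with an $(n-1)$-degeneracy structure corresponds precisely to equipping the associated wild structure with an identity structure in the sense of Definition~\ref{def:id-structure}. The statement that the type of degeneracy structures has at most one element thus becomes the statement that the type of identity structures extending the given wild structure is a proposition. This is exactly what Theorem~\ref{thm:id-structure-unique} asserts in each of the three cases.

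The only subtlety, and the one step that is worth double-checking rather than the rest, is that the truncation assumption needed to apply Theorem~\ref{thm:id-structure-unique}(\ref{item:is-prop-3}) in the case $n=4$ is merely that $\Hom$ be a family of $1$-types (i.e.\ $A_1$ a family of $1$-types), which matches the hypothesis; and that the truncation condition $h$ occurring inside the tuple on the wild side is itself propositional, so no extra data is smuggled in during the translation. Once this is verified, the corollary follows with no further computation. \qed
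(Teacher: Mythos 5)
Your proposal is correct and follows essentially the same route as the paper, which simply states that the corollary is an immediate consequence of Theorem~\ref{thm:id-structure-unique} via the translation between degeneracy and identity structures. Your extra care about the propositional truncation component $h$ not adding data is a reasonable (and correct) elaboration of what the paper leaves implicit.
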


Before moving on to univalence, we want to mention a simple observation which explains the possible confusing direction reversal of the rightmost vertical arrows in Figure~\ref{fig:low-dimensional-categories}.
Note that, to a semisimplicial type with degeneracies (or wild categorical structure), a truncation condition and a univalence condition can be added independently of each other, both clearly preserving the equivalence between the semisimplicial and the categorical construction.
Thus, we hope the connection of the following statement with the rightmost vertical arrows in Figure~\ref{fig:low-dimensional-categories} is clear, although the figure adds univalence before truncation.

\begin{theorem} \label{thm:on-vertical-arrows-in-fig-1}
 A wild 2-precategory can in a canonical way be seen as a wild precategory, which can be seen as a reflexive-transitive graph.
 For the same structure with an added truncation condition as in Definition~\ref{def:drop-wildness}, this sequence is reversed:
 A preordered set can be seen as a precategory in a canonical way, and analogously, a precategory is a 2-precategory.
\end{theorem}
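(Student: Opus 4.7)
The first assertion is immediate from Definition~\ref{def:id-structure}: a wild 2-precategory is by construction a wild precategory equipped with the additional coherences $\Penta$, $t_0$, $t_1$, $t_2$, and a wild precategory is a reflexive-transitive graph equipped with $\alpha$, $\lambda$, $\rho$, so the canonical forgetful maps exist by projection.

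For the reversed direction, the plan is to verify that in each case the extra structural components required are themselves propositions (hence canonically and uniquely inhabited) once the relevant truncation condition on $\Hom$ is in place. Starting from a preordered set, $\Hom(x,y)$ is by hypothesis a proposition, and therefore also a set. The types inhabited by the associator $\alpha$ and the unitors $\lambda$, $\rho$ all consist of equalities between parallel morphisms in $\Hom$, and such types are contractible since $\Hom(x,y)$ is a proposition; thus these components are canonically available, which promotes the preordered set to a wild precategory whose morphism family is a set, i.e.\ a precategory.

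For the step from precategory to 2-precategory, $\Hom(x,y)$ is a set and hence in particular a 1-type. Since $\Hom(x,y)$ is a set, the identity types $f = g$ between parallel morphisms are propositions. The pentagon~\eqref{eq:type-of-penta} and the three triangle coherences \eqref{eq:id-triangle-0}, \eqref{eq:id-triangle-1}, \eqref{eq:id-triangle-2} are all equalities between parallel paths of this form, so they inhabit propositions and are canonically available. This upgrades the precategory to a wild 2-precategory with $1$-truncated morphisms, i.e.\ a 2-precategory. No step presents a substantive obstacle: the argument reduces to the elementary facts that $n$-truncated types are $(n+1)$-truncated and that identity types in an $n$-truncated type are $(n-1)$-truncated, combined with a mechanical inspection of the components listed in Definition~\ref{def:id-structure} to confirm each sits in the expected truncated type.
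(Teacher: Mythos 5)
Your proof is correct and follows essentially the same route as the paper: the forgetful projections give the first part, and for the second part the missing components are inhabitants of contractible types (equality types, or equalities between equalities, in a suitably truncated $\Hom$). The only quibble is the parenthetical claim that the components are ``propositions (hence canonically and uniquely inhabited)'' --- a proposition need not be inhabited --- but your actual argument correctly establishes contractibility rather than mere propositionality, so nothing is lost.
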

\begin{proof}
 The first part is of course given by the obvious projections.
 For the second part, note that the components needed to move from a preordered set to a precategory are inhabitants of types that are contractible due to the truncation condition of the preordered set.
 The identical argument works for the step from precategories to 2-precategories.
\end{proof}

Of course, Theorem~\ref{thm:on-vertical-arrows-in-fig-1} translates easily to the formulation using semi-Segal types, where (in the truncated case) the underlying semisimplicial type $(A_1, \ldots, A_{n-1})$ is trivially extended by choosing $A_n$ to be the type family that is constantly $\mathbf 1$.

\subsection{Univalence}

With identity and degeneracy structures at hand, we can 
define what it means for a morphism (or an edge) to be an isomorphism.
We then have two notions of ``sameness'' on objects, namely isomorphism and equality.
Following \citet{ahrens:rezk}, we can assume a univalence principle to collapse these notions into a single one.

To do so, we first need to define what precisely we mean by an \emph{isomorphism}.
However, we need to be careful: there are several ways in which this could be done, and not all of the are well-behaved.
Assume we are given a morphism $f : \Hom(x,y)$ in a wild precategory.
We want a \emph{proposition} $\isIso(f)$.
The obvious definition~\eqref{eq:isIsoAKS} which says $f$ is an isomorphism if there is a morphism $g$ that is both a left and a right inverse falls short of this requirement.
It works for \citet{ahrens:rezk} only because they assume that $\Hom(x,y)$ is always a set.
In our more general setting, an obvious approach is to mirror the definition of a \emph{bi-invertible map}~\cite[Def.~4.2.7 ff.]{hott-book}: 

\begin{definition}[isomorphism in a reflexive-transitive graph] \label{def:iso-in-cat}
 Given a morphism $f : \Hom(x,y)$ in a reflexive-transitive graph or wild (2-) precategory,
 we define the types of \emph{left} and \emph{right inverses} to be
 \begin{equation}
  \mathsf{linv} \defeq \sm{g : \Hom(y,x)} g \circ f = \Ids_x \qquad \text{and} \qquad \mathsf{rinv} \defeq \sm{g : \Hom(y,x)} f \circ g = \Ids_y.
 \end{equation}
 We say that $f$ is an \emph{isomorphism} if it has a left and a right inverse,
 \begin{equation}
  \isIso(f) \defeq \mathsf{linv}(f) \times \mathsf{rinv}(f).
 \end{equation}
\end{definition}

In the setting of semi-Segal types, this definition can be translated as follows:

\noindent
\begin{minipage}[t]{\textwidth -6.2cm}
\begin{lemma}\label{lem:iso-in-sSt}
 An edge $e : A_1(x,y)$ in a 2 (3,4)-restricted semi-Segal type with degeneracies is an \emph{isomorphism} when regarded as a morphism of the corresponding reflexive-transitive graph if and only if the $\Lambda^2_0$ and $\Lambda^2_2$-horns to the right both have contractible filling.
\end{lemma}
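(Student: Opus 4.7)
I would first identify each horn filler type with the type of a one-sided inverse of $e$ via Corollary~\ref{cor:composition}, and then invoke the classical bi-invertibility argument.

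Concretely, write $\Ids_x \defeq s^0_0(x)$ for the identity supplied by the degeneracy structure, and let $u$ be the $\Lambda^2_0$-horn with vertices $(x,y,x)$ and edges $x_{01} = e$, $x_{02} = \Ids_x$, and let $v$ be the $\Lambda^2_2$-horn with vertices $(y,x,y)$ and edges $x_{12} = e$, $x_{02} = \Ids_y$.  By Corollary~\ref{cor:composition}, the triangle type satisfies $A_2(\bound[u]{012}) \jdeq (x_{12}\circ e = \Ids_x)$, so that
\begin{equation*}
  \hornfillers{u}{2}{0}\;\jdeq\;\Sigma(g:A_1(y,x)),\,g\circ e = \Ids_x\;\jdeq\;\mathsf{linv}(e),
\end{equation*}
and symmetrically $\hornfillers{v}{2}{2} \jdeq \mathsf{rinv}(e)$.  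Contractible filling of $u$ and $v$ is therefore exactly contractibility of $\mathsf{linv}(e)$ and $\mathsf{rinv}(e)$.

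It then suffices to show that $\mathsf{linv}(e)\times\mathsf{rinv}(e)$ is inhabited if and only if both $\mathsf{linv}(e)$ and $\mathsf{rinv}(e)$ are contractible.  The ``$\Leftarrow$'' direction is immediate, since contractibility implies inhabitedness.  For ``$\Rightarrow$'', given witnesses $(g_L,p_L):\mathsf{linv}(e)$ and $(g_R,p_R):\mathsf{rinv}(e)$, the standard bi-invertibility calculation shows that any $(g,p):\mathsf{linv}(e)$ satisfies
\begin{equation*}
  g \,=\, g\circ\Ids_y \,=\, g\circ(e\circ g_R) \,=\, (g\circ e)\circ g_R \,=\, \Ids_x\circ g_R \,=\, g_R,
\end{equation*}
using $\rho$, $p_R$, $\alpha$, $p$, and $\lambda$ in turn.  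Promoting this equality through the $\Sigma$-type so as to identify the proof component as well yields contractibility of $\mathsf{linv}(e)$, and the argument for $\mathsf{rinv}(e)$ is entirely symmetric.

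The main obstacle is that this chain uses $\alpha$, $\lambda$, and $\rho$, which by Lemma~\ref{lem:id-goes-to-deg} and Theorem~\ref{thm:identities-degeneracies} correspond respectively to the $3$-restricted semi-Segal structure and its $2$-degeneracy data; accordingly the calculation goes through cleanly for $n\in\{3,4\}$, where all these pieces are part of the structure.  The borderline case $n=2$ deserves separate attention, since there neither unitors nor an associator are available explicitly; one can either extend to the $3$-restricted setting via the ``trivial'' construction of Theorem~\ref{thm:on-vertical-arrows-in-fig-1}, or argue directly using the Segal condition on $\Lambda^2_1$-horns as a substitute for the missing coherence, reading the equalities above as occurring inside the contractible horn filler type where composition is defined.
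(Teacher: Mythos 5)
Your identification of the two filler types with $\mathsf{linv}(e)$ and $\mathsf{rinv}(e)$ via Corollary~\ref{cor:composition} is exactly right, and it is in fact the entirety of the paper's own proof, which simply declares the rest immediate. Your instinct that the ``only if'' direction additionally requires the bi-invertibility argument is sound; the paper only supplies that argument later, in Lemmata~\ref{lem:neutral-equivalence-yoneda} and~\ref{lem:neutral-equivalence}, phrased in terms of neutral edges.

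However, two steps of your write-up do not hold up. First, the contraction of $\mathsf{linv}(e)$: knowing that every $(g,p) : \mathsf{linv}(e)$ satisfies $g = g_R$ does not yield contractibility, because the second component $g \circ e = \Ids_x$ lives in an identity type of $\Hom(x,x)$, on which no truncation is assumed; ``promoting the equality through the $\Sigma$-type'' is precisely the step that fails without further input. The standard repair is to observe that $\mathsf{linv}(e)$ is judgmentally the fibre of $(\_ \circ e) : \Hom(y,x) \to \Hom(x,x)$ over $\Ids_x$, and that this map is bi-invertible as a function (using $\alpha$ and the unitors, $(\_ \circ g_R)$ gives a retraction and $(\_ \circ g_L)$ a section), hence an equivalence with contractible fibres --- which is how the paper argues in Lemma~\ref{lem:neutral-equivalence}. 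Second, the $2$-restricted case: neither of your patches works. Theorem~\ref{thm:on-vertical-arrows-in-fig-1} only upgrades a reflexive-transitive graph to a precategory under the truncation hypothesis, and the $\Lambda^2_1$-Segal condition is already part of every $2$-restricted semi-Segal type and encodes no associativity or unitality whatsoever. Indeed, the forward implication genuinely fails there: the one-object reflexive-transitive graph with $\Hom \defeq \mathbb{Z}$, constant-zero composition and $\Ids \defeq 0$ makes every morphism an isomorphism while $\mathsf{linv}$ is equivalent to $\mathbb{Z}$, hence not contractible. The honest conclusion is that the ``only if'' direction should only be asserted for structures that are \emph{sufficient} in the sense introduced immediately after the lemma, which is also the only setting in which the paper ever relies on it.
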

\end{minipage}
\hspace{\fill}
\begin{minipage}[t]{6cm}
  \begin{equation} \label{eq:tikz-iso-horn}
  \begin{tikzpicture}[x=1cm*0.7,y=1.73cm*0.7, baseline=(current bounding box.center)]
   \node (T1) at (0,0) {$0$};
   \node (T2) at (2,0) {$1$};
   \node (T3) at (1,1) {$2$};
   
   \draw[->] (T1) to node [below] {$e$} (T2);
   \draw[->] (T1) to node [left] {$s^0_0(x)$} (T3);

   \node (T1B) at (3.5,0) {$0$};
   \node (T2B) at (5.5,0) {$1$};
   \node (T3B) at (4.5,1) {$2$};
   
   \draw[->] (T1B) to node [left] {$s^0_0(y)$} (T3B);
   \draw[->] (T2B) to node [right] {$e$} (T3B);
  \end{tikzpicture}  
  \end{equation}
\end{minipage}

\begin{proof}
This is immediate from Corollary~\ref{cor:composition}.
\end{proof}

While Definition~\ref{def:iso-in-cat} and Lemma~\ref{lem:iso-in-sSt} make sense for a reflexive-transitive graph and 2-restricted semi-Segal type respectively,
they are without further assumptions not very useful in these cases, as not even the identities (or degeneracies) will be isomorphisms.
However, either one more level of structure or a truncation condition will be enough to make the definition of an isomorphism well behaved.

Thus, if $S$ is one of the structures of Definition~\ref{def:id-structure} or Definition~\ref{def:semi-segal-with-deg}, we say that $S$ is \emph{sufficient}
if it either has at least 3 levels of structure (i.e.\ is a wild precategory or a 3-restricted semi-Segal type with degeneracies), or if the morphisms/edges are a family of propositions.

\begin{lemma} \label{lem:sufficient-is-good}
Assume a sufficient structure. Then, for any $x$, the morphism $\Ids_x$ (or edge $s^0_0(x)$) is an isomorphism.
\end{lemma}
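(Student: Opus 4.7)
The plan is to take $\Ids_x$ itself as both a left and a right inverse to $\Ids_x$. Unwinding Definition~\ref{def:iso-in-cat}, this reduces the whole claim to producing a single equation $\Ids_x \circ \Ids_x = \Ids_x$, used once for the left-inverse component and once for the right-inverse component of $\isIso(\Ids_x)$.

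The next step is to split according to which disjunct of the definition of \emph{sufficient} is assumed. In the first case, the structure has at least three levels of identity data, i.e.\ it is a wild precategory or a wild $2$-precategory (equivalently, by Theorem~\ref{thm:identities-degeneracies}, a $3$- or $4$-restricted semi-Segal type with degeneracies). Then the unitors $\lambda$ and $\rho$ are among the data, so $\lambda_{\Ids_x}: \Ids_x \circ \Ids_x = \Ids_x$ is a witness that $\Ids_x$ is a left inverse of itself, and $\rho_{\Ids_x}$ is a witness that it is a right inverse.

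In the second case, $\Hom$ is a family of propositions, so the equation $\Ids_x \circ \Ids_x = \Ids_x$ holds automatically: both sides inhabit the proposition $\Hom(x,x)$. Hence $\Ids_x$ is again its own left and right inverse, with no appeal to any unitor data. Since these two disjuncts exhaust the definition of \emph{sufficient}, the argument concludes.

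The semi-Segal formulation (that the edge $s^0_0(x)$ is an isomorphism in the sense of Lemma~\ref{lem:iso-in-sSt}) follows by transporting along Theorem~\ref{thm:identities-degeneracies}. There is no real obstacle in the proof; the whole point of the definition of \emph{sufficient} is precisely to arrange matters so that the required unitor equation is available one way or another, without needing to invoke further higher coherences.
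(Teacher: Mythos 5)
Your proof is correct and follows essentially the same route as the paper: exhibit $\Ids_x$ as its own left and right inverse, using the unitors $\lambda_{\Ids_x}$, $\rho_{\Ids_x}$ in the wild-precategory case and propositionality of $\Hom(x,x)$ in the truncated case. The only cosmetic difference is that the paper phrases the second case in semi-Segal terms (contractibility of the $2$-simplex fillers and hence of the horns of Lemma~\ref{lem:iso-in-sSt}), whereas you argue directly from Definition~\ref{def:iso-in-cat}; the content is the same.
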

\begin{proof}
In a wild precategory, the unitors imply that $\Ids_x$ is its own inverse, hence it is an isomorphism.
On the other hand, if $A_1$ is a family of propositions, then every boundary of a 2-simplex has a contractible filling, hence so do the horns of Definition~\ref{lem:iso-in-sSt}, with $f$ set to $s^0_0(x)$.
\end{proof}

We write
$x \cong y$ for $\Sigma (f : \Hom(x,y)), \isIso(f)$ or for $\Sigma(f : A_1(x,y)), \isIso(f)$, respectively.
We have the familiar function
\begin{equation}
 \mathsf{idtoiso} : x = y \to x \cong y,
\end{equation}
defined by path induction, making use of Lemma~\ref{lem:sufficient-is-good}.

\begin{definition}[univalence]
 A sufficient structure is called \emph{univalent} if the function $\mathsf{idtoiso}$ is an equivalence of types.
\end{definition}

Using the developments of the current and the previous subsection,
we are able to avoid two threats to the well-behavedness of categorical structure.
The first is the issue of wild structure, and the second is the problem that we may have more than one notion of ``sameness''.
Thus, we can record:
\begin{definition}[well-behaved categorical structure] \label{def:well-behaved-uni-structure}
 Combining a truncation condition with univalence, we say:
 \begin{enumerate}[label=$(C{\arabic*})$]
  \item A \emph{poset} (or \emph{univalent $0$-category}) is a reflexive-transitive graph where $\Hom$ is a family of propositions and the univalence condition is satisfied.
  \item A \emph{univalent category} is a univalent wild category 
  where $\Hom$ is a family of sets.
  \item A \emph{univalent 2-category} is a univalent wild 2-category where $\Hom$ is a family of 1-types.
 \end{enumerate}
 The corresponding semisimplicial constructions are the following:
 \begin{enumerate}[label=$(S{\arabic*})$]
  \item A \emph{univalent semi-Segal set} is a 2-restricted semi-Segal type with degeneracies where $A_1$ is a family of propositions, together with the univalence condition.
  \item A \emph{univalent semi-Segal 1-type} is a univalent 3-restricted semi-Segal type with degeneracies such that the family $A_1$ of edges is a family of sets.
  \item A \emph{univalent semi-Segal 2-type} is a univalent 4-restricted semi-Segal type with degeneracies where $A_1$ is a family of 1-types.
 \end{enumerate}
\end{definition}

Regarding the terminology, observe that, in a \emph{univalent semi-Segal $n$-type} (for $n \in \{0,1,2\}$), the type $A_0$ of points is an $n$-type.
This is due to the argument familiar from~\citep{ahrens:rezk} that $x=x$ is equivalent to the $(n-1)$-type of isomorphisms in $A_1$.
We will discuss further related observations in Remark~\ref{rem:flexible-main-definition}.

\begin{theorem} \label{thm:univalent-wellbehaved-equivalent}
 The two lists $(C)$ and $(S)$ in Definition~\ref{def:well-behaved-uni-structure} define equivalent structures.
 Posets correspond to univalent semi-Segal sets,
 univalent categories to univalent semi-Segal 1-types,
 and 
 univalent 2-categories to univalent semi-Segal 2-types.
\end{theorem}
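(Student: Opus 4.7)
The plan is to reduce the claim to the already-established equivalence of Theorem~\ref{thm:identities-degeneracies} and then show that the two additional pieces of data added in Definition~\ref{def:well-behaved-uni-structure}, namely the truncation condition and the univalence condition, are preserved by this equivalence. Since both additions are propositions, it suffices to verify that they agree on corresponding structures, and the argument then factors through the equivalence already constructed.

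First, I would observe that Theorem~\ref{thm:identities-degeneracies} delivers equivalences pairwise for the three levels: reflexive-transitive graphs with $2$-restricted semi-Segal types with degeneracies, wild precategories with $3$-restricted semi-Segal types with degeneracies, and wild 2-precategories with $4$-restricted semi-Segal types with degeneracies. Under these equivalences, the family $\Hom$ on the categorical side is literally identified with the family $A_1$ on the semi-Segal side (this was the starting point of the construction in Section~\ref{sec:composition-and-horns}). Consequently, the truncation conditions on $\Hom(x,y)$ in $(C1)$, $(C2)$, $(C3)$ match the truncation conditions on $A_1(x,y)$ in $(S1)$, $(S2)$, $(S3)$ on the nose, and this step contributes nothing beyond a substitution.

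Next, I would show that univalence on both sides corresponds. By Lemma~\ref{lem:iso-in-sSt}, the predicate $\isIso(f)$ in the sense of Definition~\ref{def:iso-in-cat} is equivalent to the conjunction of contractibility of the two outer horn fillers pictured in~\eqref{eq:tikz-iso-horn}, where the identities have been replaced by the corresponding degeneracies $s^0_0$. Therefore the type $x \cong y$ agrees on both sides of the equivalence of Theorem~\ref{thm:identities-degeneracies}. To compare the functions $\mathsf{idtoiso}$, it is enough to check that they agree on $\refl_x$, by path induction: on the categorical side this yields $(\Ids_x, \text{witness})$, on the semi-Segal side it yields $(s^0_0(x), \text{witness})$, and under Theorem~\ref{thm:identities-degeneracies} the identity structure of the categorical side is mapped to the degeneracy structure of the semi-Segal side (Lemma~\ref{lem:id-goes-to-deg}), so $\Ids_x$ corresponds precisely to $s^0_0(x)$. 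Thus the two $\mathsf{idtoiso}$ maps are identified, and being an equivalence is a proposition which transports along this identification.

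The step I expect to take the most care is the verification that the two proofs of $\isIso(\Ids_x)$ and $\isIso(s^0_0(x))$ supplied by Lemma~\ref{lem:sufficient-is-good} are actually identified, since these are the witnesses used in the definition of $\mathsf{idtoiso}$. For the truncated cases of Definition~\ref{def:well-behaved-uni-structure}, however, $\isIso$ is a proposition (either because $\Hom$ is a family of propositions, or because the well-behavedness of isomorphisms in the precategory case forces uniqueness of inverses up to higher coherence data that lives in a sufficiently truncated type), so any two such witnesses are automatically equal, and the comparison is immediate. Combined with the correspondence of underlying structures, this completes the equivalence for each of the three levels.
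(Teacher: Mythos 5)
Your proposal is correct and takes essentially the same route as the paper: the paper's own proof is a one-line reduction to Theorem~\ref{thm:identities-degeneracies}, noting that the truncation and univalence conditions are propositions added on each side, and your argument simply fills in the (correct) details of why those conditions correspond under that equivalence. The extra care you take with identifying the two $\mathsf{idtoiso}$ maps via their values on $\refl$ and the propositionality of $\isIso$ is exactly the verification the paper leaves implicit.
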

\begin{proof}
 This is a simple extension of the equivalence in Theorem~\ref{thm:identities-degeneracies},
 to which truncation and univalence condition are added on each side.
\end{proof}

We can finally explain the Figure~\ref{fig:low-dimensional-categories} in full.
In the third column (both front and back), we have simply added the condition that the structure in question is univalent.
In the last column, we assume that the structure is truncated.

We also record:

\begin{theorem} [uniqueness of well-behaved categorical structure] \label{thm:uniqueness-of-well-behaved-structure}
 Given an $(n+2)$-restricted semi-Segal type ($n \in \{0,1,2\}$), there is at most one way to equip it with the structure of a univalent semi-Segal $n$-type.
 In other words, the type of structure which turns an $(n+2)$-restricted semi-Segal type into a univalent semi-Segal $n$-type is a proposition.
 The analogous statement holds for reflexive-transitive graphs, for wild semicategories, and for wild 2-semicategories.
\end{theorem}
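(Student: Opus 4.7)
The plan is to decompose the additional data needed to upgrade an $(n+2)$-restricted semi-Segal type into a univalent semi-Segal $n$-type into three logically independent pieces and show that each of them, given the earlier ones, is a proposition. Concretely, the extra structure is (i) an $(n+1)$-degeneracy structure, (ii) the truncation hypothesis that $A_1$ is a family of $(n-1)$-types, and (iii) the univalence condition that $\mathsf{idtoiso}$ is an equivalence. By reordering the $\Sigma$-type so that these appear in the order (ii), (i), (iii), it suffices to check that each component inhabits a proposition once the preceding ones are assumed, and then apply the standard fact that a $\Sigma$-type with propositional base and propositional fibres is itself a proposition.

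For (ii), the condition ``$A_1$ is a family of $(n-1)$-types'' is a $\Pi$-type of propositions and hence a proposition. For (iii), being an equivalence is always a proposition (it is expressible as contractibility of all fibres), so given the other data, univalence is propositional. The only nontrivial piece is (i), and here I would simply invoke Corollary~\ref{cor:truncation-deg-unique}: once we know $A_1$ is $(n-1)$-truncated (equivalently, $(n+2-3)$-truncated for the relevant value of the semi-Segal level), the corollary states precisely that the type of degeneracy structures is a proposition. For the analogous claim on the wild categorical side (reflexive-transitive graph, wild semicategory, wild 2-semicategory), I would use Theorem~\ref{thm:id-structure-unique} in place of the corollary, which gives exactly the propositionality of the identity structure after assuming the appropriate truncation.

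Having established the three propositionality statements, the final assembly is routine: the type we care about is equivalent, by reassociating and reordering $\Sigma$s, to
\begin{equation}
\sm{h : \text{truncation}} \sm{d : \text{degeneracies}} \text{univalence},
\end{equation}
which is a proposition. Translating back through Theorem~\ref{thm:univalent-wellbehaved-equivalent} gives the corresponding statement for reflexive-transitive graphs, wild semicategories, and wild 2-semicategories.

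The main obstacle is essentially invisible here because the hard work has been done in Theorem~\ref{thm:id-structure-unique}; what remains is bookkeeping. The only subtlety worth flagging is that the order of the three components matters for the argument to go through cleanly: the degeneracy/identity structure is not a proposition without the truncation hypothesis, so one must add the truncation condition first and then use it when proving uniqueness of degeneracies. Once this ordering is respected, the conclusion is immediate.
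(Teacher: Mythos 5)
Your proposal is correct and follows essentially the same route as the paper, which likewise reduces the claim to Theorem~\ref{thm:id-structure-unique} (via Corollary~\ref{cor:truncation-deg-unique}) together with the observation that the truncation and univalence conditions are propositions. Your explicit remark about the ordering --- that the truncation hypothesis must be in place before uniqueness of the degeneracy/identity structure can be invoked --- is a correct and worthwhile clarification of a point the paper's one-line proof leaves implicit.
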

\begin{proof}
 This follows by combining Theorem~\ref{thm:id-structure-unique} with the fact that the univalence and truncation conditions are propositions.
\end{proof}

\section{Completeness} \label{sec:completeness}

\emph{Completeness} is, after the discussed conditions on horn fillers and truncation levels, the last missing ingredient for a formulation of univalent higher categories which does not require an explicit degeneracy structure.
The results of this section (and of Theorem~\ref{thm:id-structure-unique}) are graphically presented in Figure~\ref{fig:univalence-completeness} on page~\pageref{fig:univalence-completeness}.

\begin{definition}[neutral edges]
\label{def:neutral-edges}
Let $(A_0, \ldots, A_n)$ be an $n$-restricted semi-Segal type ($n \in \{2, 3, 4\}$).  An edge $e : A_1(a, b)$ is said to be \emph{right-neutral} if every outer horn $u : \Lambda^2_0$ with $u_{01} \jdeq e$ has contractible horn filling,
and \emph{left-neutral} if every outer horn $u : \Lambda^2_2$ with $u_{02} \jdeq e$ has contractible horn filling.
Finally, we say that $e$ is \emph{neutral} if it is both right-neutral and left-neutral, and write $\isneut(e)$ for the corresponding proposition.
\end{definition}

\begin{wrapfigure}[6]{r}{5.7cm} 
  \begin{tikzpicture}[x=1cm*0.7,y=1.73cm*0.7, baseline=(current bounding box.center)]
   \node (T1) at (0,0) {$0$};
   \node (T2) at (2,0) {$1$};
   \node (T3) at (1,1) {$2$};
   
   \draw[->] (T1) to node [below] {$e$} (T2);
   \draw[->] (T1) to node [left] {$f$} (T3);

   \node (T1B) at (3.5,0) {$0$};
   \node (T2B) at (5.5,0) {$1$}; 
   \node (T3B) at (4.5,1) {$2$};
   
   \draw[->] (T1B) to node [left] {$g$} (T3B);
   \draw[->] (T2B) to node [right] {$e$} (T3B);
  \end{tikzpicture}  \eqnum 
\end{wrapfigure}
Graphically, we see that $e$ is neutral by definition if the horns to the right have contractible filling, for \emph{any} $f$ which has $a$ as domain and $g$ which has $b$ as codomain.
If we compare the situation to the one represented in~\eqref{eq:tikz-iso-horn}, we see that the only difference is that \emph{neutral} edges have outer horn fillers when combined with any morphism, while for isomorphisms, a degeneracy is required.

If $\C$ is a transitive graph, a \emph{neutral morphism} in $\C$ is a morphism that becomes a neutral edge when $\C$ is regarded as a $2$-restricted semi-Segal type.  As explained above, neutral morphisms will play the role of equivalences.  
This is made precise by the following two lemmata.

\begin{lemma}\label{lem:neutral-equivalence-yoneda}
Let $\C = (\Ob, \Hom, \Comp, \Ids)$ be a transitive graph.  Then a morphism $f : \Hom(x, y)$ in $\C$ is right-neutral (resp. left-neutral) if and only if, for all objects $z : \Ob$, composing with $f$ gives an equivalence $\Hom(y, z) \to \Hom(x, z)$ (resp. $\Hom(w, x) \to \Hom(w, y)$).
\end{lemma}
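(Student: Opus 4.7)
My plan is to reduce each direction of the biconditional to a direct type calculation using Corollary~\ref{cor:composition}, followed by the characterization of equivalences as maps with contractible fibers (HoTT Book, Theorem~4.2.3). The key observation is that an outer $2$-horn with one edge fixed to be $f$ is entirely determined by the data of its remaining non-filler edge, so the corresponding filler type is visibly a fiber of a composition map involving $f$.

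For the right-neutral case, a $\Lambda^2_0$-horn $u$ with $u_{01} \jdeq f$ is determined by a choice of object $z \defeq u_2$ and morphism $g \defeq u_{02} \in \Hom(x, z)$. A filler consists of an edge $h \defeq u_{12} \in \Hom(y, z)$ together with an element of $A_2(\bound[u]{012})$, which by Corollary~\ref{cor:composition} is equivalent to a proof that $h \circ f = g$. The filler type is therefore equivalent to $\Sigma (h : \Hom(y,z)), (h \circ f = g)$, namely the fiber of the pre-composition map $(\_ \circ f) : \Hom(y, z) \to \Hom(x, z)$ over $g$. Requiring uniform contractibility of these fibers as $(z, g)$ ranges over all choices is exactly the condition that $(\_ \circ f)$ is an equivalence for every $z$.

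The left-neutral case proceeds symmetrically: a $\Lambda^2_2$-horn with $f$ sitting in the designated slot is parametrized by the remaining object $w \defeq u_1$ and a remaining edge with codomain $y$, and unfolding the filler type via Corollary~\ref{cor:composition} exhibits it as a fiber of the appropriate composition map with $f$, of type $\Hom(w, x) \to \Hom(w, y)$. Pointwise contractibility of the fibers then corresponds to this map being an equivalence for every $w$, and conversely every such equivalence supplies the required contractible horn fillers.

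The main obstacle is the source-target bookkeeping: for each outer horn one must pick a consistent correspondence between the vertices $u_0, u_1, u_2$ and the endpoints of $f$, and between the remaining edges and the domain/codomain of the composition map, so that the equality extracted from $A_2$ via Corollary~\ref{cor:composition} points in the right direction. Beyond this, the proof is a routine unfolding, in the same spirit as the argument for Lemma~\ref{lem:iso-in-sSt}.
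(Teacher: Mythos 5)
Your proposal is correct and follows essentially the same route as the paper: identify the type of fillers of a $\Lambda^2_0$-horn (resp.\ $\Lambda^2_2$-horn) with critical edge $f$ as the fibre of $(\_\circ f)$ (resp.\ $(f\circ\_)$) via Corollary~\ref{cor:composition}, and then use the characterisation of equivalences as maps with contractible fibres. The only nitpick is in the left-neutral bookkeeping, where the free vertex of the $\Lambda^2_2$-horn should be $u_0$ rather than $u_1$ (the critical edge being $u_{12}$), but this is exactly the kind of index-matching you already flagged and it does not affect the argument.
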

\begin{proof}
If $h : \Hom(x, z)$, the fibre over $h$ of the map $(\_ \circ f) : \Hom(y, z) \to \Hom(x, z)$ induced by $f$ is precisely given by the type of horn fillers of the horn $u : \Lambda^2_0$ with $u_{01} = f$ and $u_{02} = h$.  It follows that $f$ is right neutral if and only if this fibre is contractible, which is to say that the map $(\_ \circ f)$ is an equivalence.  The analogous statement for left-neutral morphisms is proved similarly.
\end{proof}

\begin{lemma}\label{lem:neutral-equivalence}
Let $\C$ be a wild precategory.  Then for any morphism $f$ of $\C$ the two types $\isneut(f)$ and $\isIso(f)$ are equivalent.  In particular, $\isIso(f)$ is a proposition.
\end{lemma}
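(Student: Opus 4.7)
The strategy is to prove the two types are logically equivalent and both are propositions; equivalence then follows because a logical equivalence between propositions is an equivalence of types. Note that $\isneut(f)$ is a proposition by construction, since it is a pair of universally quantified contractibility conditions, so establishing the logical equivalence automatically transfers the propositional property to $\isIso(f)$.

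For the direction $\isneut(f) \to \isIso(f)$, the plan is to feed Lemma~\ref{lem:neutral-equivalence-yoneda} back to itself. Assuming $f : \Hom(x,y)$ is right-neutral, the map $(\_ \circ f) : \Hom(y, x) \to \Hom(x, x)$ is an equivalence, so the preimage of $\Ids_x$ produces a specific $l : \Hom(y, x)$ with $l \circ f = \Ids_x$, i.e.\ an element of $\mathsf{linv}(f)$. Assuming $f$ is left-neutral, $(f \circ \_) : \Hom(y, x) \to \Hom(y, y)$ is an equivalence, and the preimage of $\Ids_y$ supplies $r : \Hom(y,x)$ with $f \circ r = \Ids_y$, i.e.\ an element of $\mathsf{rinv}(f)$.

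For the converse, given $l \in \mathsf{linv}(f)$ and $r \in \mathsf{rinv}(f)$, I first reproduce the classical argument that $l = r$ in $\Hom(y,x)$ by the chain $l = l \circ \Ids_y = l \circ (f \circ r) = (l \circ f) \circ r = \Ids_x \circ r = r$, using $\alpha$, $\lambda$, $\rho$ from the wild precategory structure. Call the common value $g$. Then $(\_ \circ g) : \Hom(x,z) \to \Hom(y,z)$ is a quasi-inverse to $(\_ \circ f)$: both round-trip computations, $h \mapsto (h \circ g) \circ f = h \circ (g \circ f) = h \circ \Ids_x = h$ and $k \mapsto (k \circ f) \circ g = k \circ (f \circ g) = k \circ \Ids_y = k$, are witnessed by pasting $\alpha$ with $\lambda$ or $\rho$ as appropriate. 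The symmetric argument shows $(g \circ \_)$ is a quasi-inverse to $(f \circ \_)$. Hence by Lemma~\ref{lem:neutral-equivalence-yoneda} both neutrality conditions hold.

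The only mildly delicate step is the associativity/unitor bookkeeping in the converse direction, but this is entirely routine since we work in a wild precategory where $\alpha$, $\lambda$, $\rho$ are all available as data and no higher coherence is invoked. Once the logical equivalence is in place, the final sentence of the lemma follows: $\isneut(f)$ is a proposition, so $\isIso(f)$, being logically equivalent to it, is also a proposition, and the logical equivalence upgrades to the desired equivalence of types.
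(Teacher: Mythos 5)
Your two logical implications are fine: the forward direction is exactly the intended use of Lemma~\ref{lem:neutral-equivalence-yoneda}, and your converse (proving $l = r$ by the classical chain and then exhibiting $(\_ \circ g)$ and $(g \circ \_)$ as quasi-inverses) is a valid, mildly different route from the paper's, which instead observes that a left (resp.\ right) inverse of $f$ directly yields a one-sided inverse of each of the two composition maps and concludes via bi-invertibility; both arguments use only $\alpha$, $\lambda$, $\rho$ and are acceptable.

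The gap is in your final step. A logical equivalence with a proposition does \emph{not} transfer propositionality: the two-element type is logically equivalent to the unit type but is not a proposition. So from ``$\isneut(f)$ is a proposition and $\isneut(f) \leftrightarrow \isIso(f)$'' you may conclude neither that $\isIso(f)$ is a proposition nor that the logical equivalence upgrades to an equivalence of types --- that upgrade requires \emph{both} sides to be propositions (or an explicit retraction, which you have not provided). You must prove $\isprop(\isIso(f))$ separately. The standard way, and the one the paper takes, is to show that $\isIso(f)$ is contractible whenever it is inhabited: if $f$ is an isomorphism, then by your converse direction it is neutral, so $(\_ \circ f)$ and $(f \circ \_)$ are equivalences; the types $\mathsf{linv}(f)$ and $\mathsf{rinv}(f)$ are precisely the fibres of these equivalences over $\Ids_x$ and $\Ids_y$, hence contractible, and therefore $\isIso(f) \jdeq \mathsf{linv}(f) \times \mathsf{rinv}(f)$ is contractible. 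A type that is contractible when inhabited is a proposition, and with both sides now known to be propositions your concluding step goes through.
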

\begin{proof}
If $f$ is a neutral edge, then by Lemma~\ref{lem:neutral-equivalence-yoneda} composition with $f$ in both directions gives an equivalence, hence $f$ has a left and a right inverse.  Conversely, if $f$ has a left (resp. right) inverse, then, by associativity of composition in $\C$, it follows that $(\_ \circ f)$ has a right (resp. left) inverse and $(f \circ \_)$ has a left (resp. right) inverse.  In particular, if $f$ has both inverses, then both composition maps are equivalences by Lemma~4.3.3 of~\citep{hott-book}, hence $f$ is neutral by Lemma~\ref{lem:neutral-equivalence-yoneda}.
Therefore, $\isneut(f)$ and $\isIso(f)$ are logically equivalent, so all it remains to show is that $\isIso(f)$ is a proposition.  This now follows immediately from the fact that, for a neutral morphism $f$, the type stating that $f$ is left (resp. right) invertible is equivalent to a singleton, hence it is contractible.
\end{proof}

Although we can fill arbitrary inner horns in a semi-Segal type (cf.~Remark~\ref{rem:on-semiSegal-definition}), nothing is assumed about fillers of \emph{outer horns}.  The definition of neutral edge can then be interepreted as saying that outer horns of the form $\Lambda^2_0$ and $\Lambda^2_2$ can be filled in a semi-Segal type, provided that their \emph{critical edge} is neutral, where the critical edge of an outer horn $u : \Lambda^n_0$ is $u_{01}$, while the critical edge of $v : \Lambda^n_n$ is $v_{n-1, n}$.  More generally, neutral edges in \emph{critical} positions allow us to fill higher dimensional outer horns.
The case that we need is the following:

\begin{lemma}\label{lem:3-special-outer-horns}
Let $A$ be a 3-restricted semi-Segal type, and $x : \Lambda^3_0$ be an outer horn where the critical edge $x_{01}$ is neutral.  Then $x$ has contractible filling.
\end{lemma}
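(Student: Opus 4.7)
Given the outer horn $x : \Lambda^3_0$ with neutral critical edge $x_{01}$, the type of fillers is
\[
  \hornfillers{x}{3}{0} \;\jdeq\; \Sigma\,(x_{123} : A_2(\bound[x]{123}))\,(x_{0123} : A_3(\bound[x]{0123})).
\]
My plan is to express this as the fibre over $x_{023}$ of an equivalence between identity types, from which contractibility will be immediate.

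First, I would make two identifications. By Corollary~\ref{cor:composition}, the type $A_2(\bound[x]{123})$ is equivalent to the identity type $x_{23} \circ x_{12} = x_{13}$. For the second component, I view the 3-boundary of the prospective tetrahedron as the $\Lambda^3_1$-horn $u$ obtained from the data of $x$ together with a candidate $x_{123}$ in the role of $u_{123}$, with the given face $x_{023}$ playing the role of the filled face $f$ in equation~\eqref{eq:gen-associator-iso}. Although Lemma~\ref{lem:1-general-assoc} is phrased for $\Lambda^4_1$, the formula for $\widehat\alpha$ uses only cells of dimension at most three, so the argument applies verbatim to a $\Lambda^3_1$-horn in a 3-restricted semi-Segal type. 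This yields
\[
  A_3(\bound[x]{0123}) \;\simeq\; (\widehat\alpha(x_{123}) = x_{023}),
\]
where
\[
  \widehat\alpha(p) \;\defeq\; \mapfunc{x_{23} \circ \_}(x_{012}^{-1}) \ct \alpha(x_{01}, x_{12}, x_{23}) \ct \mapfunc{\_ \circ x_{01}}(p) \ct x_{013}.
\]

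Second, combining both identifications, the filler type becomes $\Sigma\,(p : x_{23} \circ x_{12} = x_{13})\,(\widehat\alpha(p) = x_{023})$, which is by definition the fibre over $x_{023}$ of the function $g : p \mapsto \widehat\alpha(p)$. It therefore suffices to show that $g$ is an equivalence. As $p$ varies, only the middle factor $\mapfunc{\_ \circ x_{01}}(p)$ depends on $p$, while the surrounding pre- and post-concatenations with fixed paths are always equivalences on identity types. By right-neutrality of $x_{01}$ together with Lemma~\ref{lem:neutral-equivalence-yoneda}, the map $(\_ \circ x_{01}) : \Hom(x_1, x_3) \to \Hom(x_0, x_3)$ is an equivalence, and hence so is its induced action $\mapfunc{\_ \circ x_{01}}$ on identity types. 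Thus $g$ is a composition of equivalences, hence an equivalence, and its fibre over $x_{023}$ is contractible, as required.

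The main subtlety is the first step: one has to apply Lemma~\ref{lem:1-general-assoc} with the roles of $x_{023}$ and $x_{123}$ swapped relative to the usual Segal setting, where $x_{023}$ is the contractibly determined face and $x_{123}$ is given. In our $\Lambda^3_0$-horn $x_{023}$ is given while $x_{123}$ is the unknown, so the identification of $A_3(\bound[x]{0123})$ amounts to reading the formula for $\widehat\alpha$ ``the other way around''. Once this is done cleanly, the contractibility of the filler is a direct consequence of the fact that right-neutrality of $x_{01}$ turns $\mapfunc{\_ \circ x_{01}}$ into an equivalence of identity types.
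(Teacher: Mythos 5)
Your proof is correct, but it takes a genuinely different route from the paper's. The paper stays entirely on the semisimplicial side and argues by reshuffling $\Sigma$-components, in the style of Remark~\ref{rem:on-semiSegal-definition}: since $x_{01}$ is right-neutral, the pair $(x_{13}, x_{013})$ is the contractible filler of a $\Lambda^2_0$-horn, so the filler type $\Sigma\,(x_{123}, x_{0123})$ may be enlarged to the type of tuples $(x_{13}, x_{013}, x_{123}, x_{0123})$; this in turn regroups as a sequence of two \emph{inner} horn fillers $(x_{13}, x_{123})$ and $(x_{013}, x_{0123})$, each contractible by the Segal condition. You instead pass through the categorical translation (Corollary~\ref{cor:composition} and Lemma~\ref{lem:1-general-assoc}), exhibit the filler type as the fibre of an explicit map $g$ between identity types, and conclude from the fact that right-neutrality makes $(\_ \circ x_{01})$, hence $\mapfunc{\_ \circ x_{01}}$, an equivalence via Lemma~\ref{lem:neutral-equivalence-yoneda}. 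Both arguments ultimately exploit the same fact about $x_{01}$, but the paper's version is shorter, needs no associator bookkeeping, and scales uniformly to higher-dimensional outer horns (it is exactly the technique reused in Lemma~\ref{lem:3-completeness-degeneracies}); yours is more computational and has the side benefit of describing the unique filler explicitly as a preimage under $g$, at the cost of leaning on the canonical judgmental form of the structure and on your (correct) observation that the statement of Lemma~\ref{lem:1-general-assoc} concerns $\Lambda^3_1$-horns despite being typeset with $\Lambda^4_1$.
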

\begin{proof}
This proof follows the same idea as the proof sketched in Remark~\ref{rem:on-semiSegal-definition}.  
If $x : \Lambda^3_0$ is an outer horn where the critical edge is neutral, we want to show that the type $P$ of pairs $(x_{123}, x_{0123})$ is contractible.  Since $x_{01}$ is a neutral edge, the type of pairs $(x_{13}, x_{013})$ is contractible. Therefore, $P$ is equivalent to the type of tuples $(x_{13}, x_{013}, x_{123}, x_{0123})$, which is again contractible, because it can be regarded as a sequence of the two inner horn fillers $(x_{13}, x_{123})$ and $(x_{013}, x_{0123})$.
\end{proof}

With the notion of a neural edge at hand, we can finally formulate \emph{completeness}.
The usefulness of this notion for semi-Segal spaces has been observed by \citet{lurie2009classification} and \citet{harpaz:quasi}.
Our type-theoretic version is the following.
\begin{definition}[completeness]
\label{def:restr-csst}
An $n$-restricted ($n \in \{2,3,4\}$) semi-Segal type $A \jdeq (A_0, \ldots, A_n)$ is \emph{complete} if,
for every point $x$, there is a unique neutral morphism with codomain $x$:
\begin{equation}
\Pi(x:A_0), \iscontr\left(\Sigma(y:A_0), (e : A_1(y,x)), \isneut(e)\right).
\end{equation}
\end{definition}

As said earlier, completeness is equivalent to univalence whenever the latter makes sense.

\begin{lemma} \label{lem:uni-equals-complete}
Let $A$ be a 3-restricted (or 4-restricted) semi-Segal type with degeneracies.  Then $A$ is complete if and only if it is univalent as a wild precategory.
\end{lemma}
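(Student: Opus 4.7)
The plan is to unfold both conditions until they reduce to a common contractibility statement, using Theorem~\ref{thm:identities-degeneracies} to freely switch between the semi-Segal and wild precategorical presentations of $A$.

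First I would rewrite completeness. By Lemma~\ref{lem:neutral-equivalence}, the proposition $\isneut(e)$ is equivalent to $\isIso(e)$ for every edge $e$. Transporting along this fibrewise equivalence turns Definition~\ref{def:restr-csst} into the statement that, for every $x : A_0$, the type $\Sigma(y : A_0), (y \cong x)$ is contractible.

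Next I would unfold univalence. By the standard fact that a family of maps is a fibrewise equivalence if and only if its induced total-space map is an equivalence~\cite[Thm.~4.7.7]{hott-book}, univalence of $A$ is equivalent to saying that, for every $x$, the map
\begin{equation}
  \Sigma(y : A_0), (x = y) \; \longrightarrow \; \Sigma(y : A_0), (x \cong y)
\end{equation}
induced by $\mathsf{idtoiso}$ is an equivalence. The domain is a singleton and hence contractible, so univalence reduces to the statement that $\Sigma(y : A_0), (x \cong y)$ is contractible for every $x$.

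The two resulting conditions differ only in the direction of the isomorphism, and I would bridge them by inversion: since $\isIso$ is a proposition (Lemma~\ref{lem:neutral-equivalence}), the assignment that sends an isomorphism $f : x \cong y$ to its inverse (using either of the one-sided inverses, which agree by the usual $g_l = g_l \circ f \circ g_r = g_r$ computation) gives an equivalence $(x \cong y) \simeq (y \cong x)$. Summing over $y$ then transports one contractibility statement into the other, closing the equivalence between completeness and univalence. The only genuinely substantive ingredient of the whole argument is Lemma~\ref{lem:neutral-equivalence}; everything else is standard type-theoretic bookkeeping, and the main point to watch is keeping track of source and target of isomorphisms when relating the two formulations.
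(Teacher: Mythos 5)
Your proof is correct and follows essentially the same route as the paper's: both use Lemma~\ref{lem:neutral-equivalence} to identify $\isneut$ with $\isIso$, and both reduce univalence to contractibility of $\Sigma(y:A_0),\, (x \cong y)$ via the total-space characterisation of fibrewise equivalences~\cite[Thm.~4.7.7]{hott-book} together with contractibility of singletons. The one place where you are more careful than the paper is the direction mismatch between completeness (neutral edges \emph{into} $x$) and univalence (isomorphisms \emph{out of} $x$): the paper silently elides this, whereas you bridge it explicitly by inverting isomorphisms, which is a legitimate and worthwhile addition.
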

\begin{proof}
Fix a point $x : A_0$.
For any point $y : A_0$, we have a sequence of functions
\begin{equation}
\begin{tikzpicture}[x=5cm, baseline=(current bounding box.center)]
  \node (A) {$(x = y)$};
  \node[right of=A, node distance=4cm] (B) {$\Sigma(e : A_1(x,y)), \isIso(e)$};
  \node[right of=B, node distance=5.5cm] (C) {$\Sigma(e : A_1(x,y)), \isneut(e)$};
  \draw[->, shorten <=0.2cm, shorten >=0.2cm] (A) to node [above] {\scriptsize $\mathsf{idtoiso}$} (B);
  \draw[->, shorten <=0.2cm, shorten >=0.1cm] (B) to node [above] {\scriptsize $\mathsf{isotoneut}$} (C);
\end{tikzpicture}
\end{equation}
where the second function is defined and an equivalence by Lemma~\ref{lem:neutral-equivalence}.
If we pass to the total spaces as in~\cite[Definition 4.7.5]{hott-book}, we get a function
\begin{equation} \label{eq:total-id-to-neutral}
 \mathsf{total}(\mathsf{isotoneut} \circ \mathsf{idtoiso}) : \, \Sigma(y:A_0), x=y \, \longrightarrow \, \Sigma(y:A_0), (e : A_1(x,y)), \isneut(e).
\end{equation}
Since its domain is a singleton,~\eqref{eq:total-id-to-neutral} is an equivalence for all $x$ if and only if $\Sigma(y:A_0), (e : A_1(x,y)), \isneut(e)$ is always contractible, i.e.\ if and only if $A$ is complete.
At the same time,~\eqref{eq:total-id-to-neutral} is an equivalence if and only if $\mathsf{isotoneut} \circ \mathsf{idtoiso}$ is an equivalence by~\cite[Theorem 4.7.7]{hott-book}, thus by 2-out-of-3 exactly if $\mathsf{idtoiso}$ is an equivalence for all $x,y$.
\end{proof}

Perhaps surprisingly, one can always construct a degeneracy structure for a complete semi-Segal type.

\begin{lemma}\label{lem:1-completeness-degeneracies}
Let $A$ be a $2$-restricted complete semi-Segal type.  Then $A$ admits a $1$-degeneracy structure.
\end{lemma}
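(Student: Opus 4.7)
The plan is to provide, for each $x : A_0$, an element $s^0_0(x) : A_1(x,x)$. Note that, by Definition~\ref{def:degeneracy-structure}, a $1$-degeneracy structure is nothing more than such a function, with no further requirements, so it suffices to produce, canonically in $x$, \emph{some} self-loop at $x$.

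First, I apply completeness (Definition~\ref{def:restr-csst}) at $x$ to extract the distinguished triple making $\Sigma(y:A_0), (e : A_1(y,x)), \isneut(e)$ contractible. Denote this triple by $(y_x, \iota_x, p_x)$, so that $\iota_x : A_1(y_x, x)$ is neutral.

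The key observation is that the right-neutrality half of $\isneut(\iota_x)$ is exactly what we need to produce a self-loop at $x$ (not at $y_x$). Since $A$ is a $2$-restricted semi-Segal type, Lemma~\ref{lem:0-semicategory-semisegal} supplies a corresponding transitive graph structure, and hence Lemma~\ref{lem:neutral-equivalence-yoneda} applies: right-neutrality of $\iota_x$ means that, for every object $z$, the post-composition map $(\_ \circ \iota_x) : A_1(x, z) \to A_1(y_x, z)$ is an equivalence. Instantiating at $z \defeq x$, the map
\begin{equation}
  (\_ \circ \iota_x) : A_1(x, x) \longrightarrow A_1(y_x, x)
\end{equation}
is an equivalence. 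I then define $s^0_0(x)$ to be the unique preimage of $\iota_x$ itself under this equivalence, i.e.\ the unique edge $s^0_0(x) : A_1(x,x)$ satisfying $s^0_0(x) \circ \iota_x = \iota_x$.

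There is no serious obstacle here. The only delicate-sounding point is that completeness hands us a neutral edge whose domain $y_x$ is a priori different from $x$; but this is precisely what right-neutrality, read in Yoneda style, allows us to correct for, by transporting the existence of a ``unit'' from the fibre at $y_x$ to the fibre at $x$. The argument uses no data beyond $A_2$ and the $\Lambda^2_1$- and $\Lambda^2_0$-horn-filling conditions, in agreement with the statement being about $2$-restricted semi-Segal types. I will not dwell here on whether the $s^0_0(x)$ so produced is itself neutral or satisfies further identity-like properties, since those are irrelevant for producing a mere $1$-degeneracy structure and will only become pertinent at higher levels.
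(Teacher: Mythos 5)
Your proof is correct and is essentially the paper's argument in different clothing: the paper fills the $\Lambda^2_0$-horn with $u_{01} \jdeq u_{02} \jdeq \iota_x$ directly using right-neutrality, and the unique filler $(x_{12}, x_{012})$ of that horn is exactly your unique preimage of $\iota_x$ under the equivalence $(\_ \circ \iota_x) : A_1(x,x) \to A_1(y_x,x)$ supplied by Lemma~\ref{lem:neutral-equivalence-yoneda}. No gap; the Yoneda-style phrasing is just a translation of the same horn-filling step.
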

\begin{proof}
We need to show that for every point $y : A_0$, there exists an edge $s^0_0(y) : A_1(y, y)$.  
By completeness, we can find a point $x : A_0$ and a neutral edge $e : A_1(x, y)$.  
Now consider the horn $u : \Lambda^2_0$, where $u_{01} \defeq u_{02} \defeq e$.  
Since $e$ is neutral, we can fill $u$ to a full triangle which we denote by $S^0_0(y)$.
Observe that the face $(S^0_0(y))_{12}$ gives us an edge in $A_1(u_1, u_2) \jdeq A_1(y, y)$, and we define $s^0_0(y)$ to be this edge.
\end{proof}

\begin{lemma}\label{lem:2-completeness-degeneracies}
Let $A$ be a $3$-restricted complete semi-Segal type. Then $A$ admits a $2$-degeneracy structure.
\end{lemma}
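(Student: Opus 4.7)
The plan is to extract $s^1_0$ and $s^1_1$ as specific $2$-faces of suitably chosen $3$-simplices, each built so that one of its faces is a copy of the triangle $S^0_0$ produced in the proof of Lemma~\ref{lem:1-completeness-degeneracies}. Recall from that proof that completeness supplies, for each $x : A_0$, a unique neutral edge $e_x : A_1(y_x, x)$ together with a filling triangle $S^0_0(x)$ having vertices $y_x, x, x$ and edges $e_x, s^0_0(x), e_x$. The strategy is, given $x_{01} : A_1(x_0, x_1)$, to assemble a tetrahedron whose data are determined by $x_{01}$, $e_{x_0}$ or $e_{x_1}$, and $S^0_0$, and to read the desired unit $2$-simplex off as one of its faces.

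For $s^1_1(x_{01})$ I would take the tetrahedron with vertex list $(x_0, y_{x_1}, x_1, x_1)$, setting $v_{02} \jdeq v_{03} \jdeq x_{01}$, $v_{12} \jdeq v_{13} \jdeq e_{x_1}$, $v_{23} \jdeq s^0_0(x_1)$, and $v_{123} \defeq S^0_0(x_1)$. The two faces $v_{012}$ and $v_{013}$ share the boundary $(v_{01}, e_{x_1}, x_{01})$ for the still-unknown edge $v_{01}$; left-neutrality of $e_{x_1}$ produces a unique $\Lambda^2_2$-filler, which simultaneously pins down $v_{01}$ and supplies both $v_{012}$ and $v_{013}$. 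The only remaining face is $v_{023}$, whose required boundary is exactly $(x_{01}, s^0_0(x_1), x_{01})$; this is precisely the boundary demanded of $s^1_1(x_{01})$ in Definition~\ref{def:degeneracy-structure}. The resulting horn is the \emph{inner} $\Lambda^3_1$-horn, so $h_3$ provides a unique filler, and I define $s^1_1(x_{01})$ to be it.

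For $s^1_0(x_{01})$ I would mirror the construction at the source. Take the tetrahedron $(y_{x_0}, x_0, x_0, x_1)$ with $v_{01} \jdeq v_{02} \jdeq e_{x_0}$, $v_{12} \jdeq s^0_0(x_0)$, $v_{13} \jdeq v_{23} \jdeq x_{01}$, and $v_{012} \defeq S^0_0(x_0)$. The faces $v_{013}$ and $v_{023}$ again share identical boundary data $(e_{x_0}, x_{01}, v_{03})$, so a single $\Lambda^2_1$-filler obtained from $h_2$ serves for both and fixes $v_{03}$. The missing face is then $v_{123}$, whose prescribed boundary $(s^0_0(x_0), x_{01}, x_{01})$ is precisely what $s^1_0(x_{01})$ requires.

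The main obstacle is the asymmetry between the two cases: the tetrahedron carrying the right-unit law presents its unknown face in an \emph{outer} position, so the Segal condition $h_3$ alone cannot fill it. This is exactly the scenario Lemma~\ref{lem:3-special-outer-horns} was proved for, and it applies here because the critical edge of our $\Lambda^3_0$-horn is $v_{01} \jdeq e_{x_0}$, which is neutral by completeness. Using it to produce a unique filler and setting $s^1_0(x_{01}) \defeq v_{123}$ completes the construction; all the judgmental constraints on vertices, edges and subfaces listed in Definition~\ref{def:degeneracy-structure} hold by the way we have assembled the tetrahedra.
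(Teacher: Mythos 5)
Your construction is correct and is essentially identical to the paper's own proof: the paper also obtains $s^1_0$ as the face $x_{123}$ of a $\Lambda^3_0$-horn built from $S^0_0$ and a doubled $\Lambda^2_1$-filler (filled via Lemma~\ref{lem:3-special-outer-horns} using neutrality of the critical edge), and $s^1_1$ as the face $x_{023}$ of an inner $\Lambda^3_1$-horn whose two remaining faces are a doubled $\Lambda^2_2$-filler obtained from left-neutrality. The only cosmetic difference is that the paper runs the two constructions on two distinct edges $f$ and $g$ to set up the next lemma.
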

\begin{proof}
We need to construct $s^1_0$ and $s^1_1$.
Let $f : A_1(x, y)$ and $g : A_1(y,z)$ be two edges in $A$ (a single edge would be enough here, we only use two different ones to facilitate the proof of Lemma~\ref{lem:3-completeness-degeneracies} below).
In both figures, the triangle composed of $e$'s and $s^0_0(y)$ is the triangle $S^0_0(y)$ constructed in the previous lemma.

\noindent
\begin{minipage}{\textwidth -6.25cm}
In the left diagram, let $p : \fulltetra 2$ be the triangle we get by filling the $\Lambda^2_1$-horn given by $e$, $g$ (i.e.\ we have $p_{01} \jdeq e$ and $p_{12} \jdeq g$).
We can define a $\Lambda^3_0$-horn $u$ by setting $u_{013} \defeq u_{023} \defeq p$, and $u_{012} \defeq S^0_0(y)$.
By Lemma~\ref{lem:3-special-outer-horns} we can fill $u$, giving us a full tetrahedron $S^1_0(g) : \fulltetra 3$, and we set $s^1_0(g)$ to be the triangle filler provided by $(S^1_0(g))_{123}$. 
\end{minipage}%
\hspace{\fill}
\begin{minipage}{6cm}
\begin{equation*}
\begin{tikzpicture}[x=0.7cm,y=1.53cm, baseline=(current bounding box.center)]
  \node (T3) at (2,1) {$3$};
  \node (T1) at (0,0) {$1$};
  \node (T2) at (4,0) {$2$};
  \node (T0) at (2,-0.5) {$0$};

  \draw[->] (T0) to node [below left] {$e$} (T1);
  \draw[->] (T0) to node [below right] {$e$} (T2);
  \draw[->] (T1) to node [left] {$g$} (T3);
  \draw[->] (T2) to node [right] {$g$} (T3);
  \draw[->] (T1) to node [above] {$s^0_0(y)$} (T2);
  
  \node (U0) at (6,-0.5) {$0$};
  \node (U1) at (6,0.5) {$1$};
  \node (U2) at (4,1) {$2$};
  \node (U3) at (8,1) {$3$};
  
  \draw[->] (U0) to node [left] {$f$} (U2);
  \draw[->] (U0) to node [right] {$f$} (U3);
  \draw[->] (U1) to node [above, near start] {$e$} (U2);
  \draw[->] (U1) to node [above, near start] {$e$} (U3);
  \draw[->] (U2) to node [above] {$s^0_0(y)$} (U3);
  \draw[->] (U0) to node [right, near end] {$q_{01}$} (U1);
\end{tikzpicture}
\end{equation*}
\end{minipage}%

In the right diagram, we first fill the $\Lambda^2_2$-horn determined by $e$ and $f$, giving us $q : \fulltetra{2}$ with $q_{02} \jdeq f$ and $q_{12} \jdeq e$, and $q_{01}$ as shown in the diagram. 
We define a $\Lambda^3_1$-horn $v$ by $v_{012} \defeq v_{013} \defeq q$ and by choosing $v_{123} \defeq S^0_0(y)$.
The filler for the horn $v$ gives us a full tetrahedron $S^1_1(f) : \fulltetra 3$, 
and we define $s^1_1(f)$ to be the filler given by the face $(S^1_1(f))_{023}$.
\end{proof}

\begin{lemma}\label{lem:3-completeness-degeneracies}
Let $A$ be a 4-restricted complete semi-Segal type. Then $A$ admits a $3$-degeneracy structure.
\end{lemma}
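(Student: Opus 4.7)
The plan is to follow the same pattern as Lemmas~\ref{lem:1-completeness-degeneracies} and~\ref{lem:2-completeness-degeneracies}, lifted one dimension higher. Since a 4-restricted complete semi-Segal type is in particular 3-restricted and complete, those lemmas already give us $s^0_0$, $s^1_0$, $s^1_1$, together with the auxiliary tetrahedra $S^0_0(y)$, $S^1_0(g)$, $S^1_1(f)$ constructed in their proofs. What remains is to construct $s^2_0$, $s^2_1$, $s^2_2$ with the boundary conditions prescribed in Definition~\ref{def:degeneracy-structure}.

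First I would establish an auxiliary lemma that extends Lemma~\ref{lem:3-special-outer-horns} to dimension $4$: any $\Lambda^4_0$-horn (or, symmetrically, any $\Lambda^4_4$-horn) whose critical edge is neutral has contractible filling. The proof mirrors Lemma~\ref{lem:3-special-outer-horns}, decomposing the missing components of the horn into a sequence of pairs, each of which forms a contractible horn-filling type; the one $3$-dimensional outer horn that appears in the decomposition is fillable by Lemma~\ref{lem:3-special-outer-horns} itself, while all the remaining horns are inner and fillable by the Segal condition.

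Next, for each $s^2_i$, I would assemble a $4$-dimensional horn $\Lambda^4_m$ whose faces glue together the input $2$-simplex $t \jdeq (\tetratupleconcrete{012})$, the previously built tetrahedra $S^1_0$, $S^1_1$ applied to the relevant edges, and the constant triangle $S^0_0(y)$ attached to the point $y$ supplied by completeness. For $s^2_1$, the horn can be arranged to be inner ($\Lambda^4_2$ say), so it is directly fillable by the Segal condition; for $s^2_0$ we form a $\Lambda^4_0$-horn whose critical edge is the neutral $e$ produced by completeness and apply the auxiliary lemma above, and dually for $s^2_2$. In each case, the required tetrahedron $s^2_i(t)$ is then read off as a single face of the $4$-simplex so obtained. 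Alternatively, once $s^2_1$ has been built, Corollary~\ref{cor:deg-derivable} yields $s^2_0$ and $s^2_2$ for free.

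The main obstacle is pure bookkeeping rather than any new conceptual step. The boundary specification in Definition~\ref{def:degeneracy-structure} pins down all $14$ lower-dimensional cells of each $s^2_i(t)$ in terms of the input and of $s^0_0$, $s^1_0$, $s^1_1$; making the extracted face match these constraints judgmentally requires choosing the auxiliary horn very carefully, and may force us to identify several intermediate triangle fillers with the canonical ones produced by the Segal condition via Corollary~\ref{cor:composition}. This verification is exactly the kind of bulk computation that our Agda formalisation is well-suited to absorb, so while the argument is lengthy on paper it is essentially routine once the horn shapes for $s^2_0$, $s^2_1$, $s^2_2$ are chosen.
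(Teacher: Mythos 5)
Your plan is sound and, in its final ``alternative'' form --- build $s^2_1$ by filling a single $4$-dimensional inner horn assembled from $S^1_0$, $S^1_1$ and $S^0_0(y)$, then invoke Corollary~\ref{cor:deg-derivable} for $s^2_0$ and $s^2_2$ --- it is exactly the route the paper takes. The differences are worth noting. First, the paper commits to the Corollary~\ref{cor:deg-derivable} reduction from the outset, which makes your proposed dimension-$4$ analogue of Lemma~\ref{lem:3-special-outer-horns} (outer $\Lambda^4_0$/$\Lambda^4_4$-horns with neutral critical edge) entirely unnecessary; your primary route of constructing $s^2_0$ and $s^2_2$ directly by outer-horn filling would work but buys nothing except extra lemmas. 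Second, for $s^2_1$ the paper uses a $\Lambda^4_1$-horn, which is covered verbatim by the Segal condition $h_4$, rather than your suggested $\Lambda^4_2$, which would additionally need the generalised-inner-horn argument of Remark~\ref{rem:on-semiSegal-definition}. Third, the part you defer as ``pure bookkeeping'' is in fact the substantive content of the paper's proof: the explicit horn has $u_{0123} \defeq S^1_0(f)$, $u_{1234} \defeq S^1_1(g)$, with $u_{124} \jdeq u_{134}$ the $\Lambda^2_1$-filler of $e$ and $g$, and $u_{0124} \jdeq u_{0134}$ the filler of a $\Lambda^3_2$-horn --- so a generalised inner horn does enter the construction after all, just one dimension down from where you anticipated; the degeneracy is then read off as the face $u_{0234}$. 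Your outline correctly identifies all the ingredients, but a complete proof would still have to exhibit such a horn and check that the extracted face has the boundary prescribed by Definition~\ref{def:degeneracy-structure}.
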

\begin{proof}
We proceed similarly as in the previous lemmata.
By Corollary~\ref{cor:deg-derivable}, we only need to construct $s^2_1$.  Let $\alpha : \fulltetra 2$ be any triangle composed of $f : A_1(x,y)$, $g : A_1(y,z)$ and $h : (x,z)$, together with a filler.

\noindent
\begin{minipage}{3.7cm}
\begin{equation*}
\begin{tikzpicture}[x=0.7cm,y=1.53cm, baseline=(current bounding box.center)]
  \node (U0) at (6,-0.5) {$0$};
  \node (U1) at (6,0.5) {$1$};
  \node (U2) at (4,1) {$2$};
  \node (U3) at (8,1) {$3$};
  \node (U4) at (6,2) {$4$};
  
  \draw[->] (U0) to node [left] {$f$} (U2);
  \draw[->] (U0) to node [right] {$f$} (U3);
  \draw[->] (U1) to node [above, near start] {$e$} (U2);
  \draw[->] (U1) to node [above, near start] {$e$} (U3);
  \draw[->] (U2) to node [above] {$s^0_0(y)$} (U3);
  \draw[->] (U0) to node [right, near end] {$q_{01}$} (U1);
  \draw[->] (U2) to node [left] {$g$} (U4);
  \draw[->] (U3) to node [right] {$g$} (U4);
\end{tikzpicture}
\end{equation*}
\end{minipage}%
\hspace{\fill}
\begin{minipage}{\textwidth-3.8cm}
The diagram on the left, where the edge $h$ is not drawn, can be viewed as putting the tetrahedra $S^1_1(f)$ and $S^1_0(g)$ from the previous lemma on top of each other.

We now construct a $\Lambda^4_1$-horn $u$ as follows:
\begin{itemize}
\item $u_{0123} \defeq S^1_0(f)$;
\item $u_{1234} \defeq S^1_1(g)$;
\item $u_{124} \jdeq u_{134}$ is the filler of the $\Lambda^2_1$-horn determined by $e$ and $g$;
\item $u_{0124} \jdeq u_{0134}$ is the filler of the $\Lambda^3_2$-horn determined by $u_{012}$, $\alpha$, and $u_{124}$.
This horn is fillable by the argument of Remark~\ref{rem:on-semiSegal-definition}.
\end{itemize}
The filler of $u$ then contains the required degeneracy as the face $u_{0234}$. \qedhere
\end{minipage}
\end{proof}

\begin{corollary}
 Any wild complete $n$-semicategory ($n \in \{0,1,2\}$) can be equipped with an identity structure.  \qed
\end{corollary}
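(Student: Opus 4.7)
The plan is to chain together results already established in the excerpt, with essentially no new construction required. First, I would invoke Theorem~\ref{thm:wild-semi} to translate each wild $n$-semicategory ($n \in \{0,1,2\}$) into the equivalent $(n+2)$-restricted semi-Segal type: a transitive graph (``wild $0$-semicategory'') becomes a $2$-restricted semi-Segal type, a wild semicategory becomes a $3$-restricted semi-Segal type, and a wild $2$-semicategory becomes a $4$-restricted semi-Segal type.

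Second, I would observe that the notion of completeness from Definition~\ref{def:restr-csst} is formulated purely in terms of neutral edges (Definition~\ref{def:neutral-edges}), which in turn only reference $A_0$, $A_1$ and the contractibility of certain outer $\Lambda^2$-horn fillings. Since this data is preserved verbatim by the equivalence of Theorem~\ref{thm:wild-semi}, the hypothesis of completeness on the wild $n$-semicategory transfers directly to the corresponding $(n+2)$-restricted semi-Segal type.

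Third, I would apply exactly one of the three preceding lemmata, matched to the value of $n$: Lemma~\ref{lem:1-completeness-degeneracies} for $n = 0$, Lemma~\ref{lem:2-completeness-degeneracies} for $n = 1$, and Lemma~\ref{lem:3-completeness-degeneracies} for $n = 2$. Each produces the appropriate $(n+1)$-degeneracy structure on the complete semi-Segal type (and for $n=2$, the remaining degeneracies $s^2_0, s^2_2$ are supplied by Corollary~\ref{cor:deg-derivable}, as used inside Lemma~\ref{lem:3-completeness-degeneracies}).

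Finally, to conclude I would invoke Theorem~\ref{thm:identities-degeneracies} (equivalently Lemma~\ref{lem:id-goes-to-deg}), which transports the degeneracy structure across the equivalence to give exactly the required identity structure: an $\Ids$ family for $n=0$, an $\Ids$ together with unitors $\lambda, \rho$ for $n=1$, and additionally the triangle coherences $t_0, t_1, t_2$ for $n=2$. Since everything is a transport along established equivalences, there is no genuine obstacle; the only point requiring care is simply checking that the intended case-by-case dictionary between ``wild $n$-semicategory'' and ``$(n+2)$-restricted semi-Segal type'' lines up with the indexing convention of the completeness-to-degeneracy lemmata, which it does.
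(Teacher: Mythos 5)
Your proposal is correct and matches the paper's intended argument exactly: the corollary is stated with no explicit proof precisely because it is the immediate composite of Lemmata~\ref{lem:1-completeness-degeneracies}--\ref{lem:3-completeness-degeneracies} (producing the degeneracy structure) with the equivalences of Theorems~\ref{thm:wild-semi} and~\ref{thm:identities-degeneracies} (transporting it to an identity structure). Your index bookkeeping — wild $n$-semicategory $\leftrightarrow$ $(n+2)$-restricted semi-Segal type carrying an $(n+1)$-degeneracy structure — is also right.
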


\section{Conclusions} \label{sec:conclusions}

We have given definitions of categorical structures in HoTT based on semi-simplicial types and proved that they are equivalent to existing notions of univalent categories~\citep{ahrens:rezk}.  
Putting pieces together, the main new notion that we consider is that of a \emph{complete semi-Segal $n$-type}, which we formally only have defined for $n \in \{0,1,2\}$. Unfolded, it can be stated as:

\begin{definition}[complete semi-Segal $n$-type] \label{def:final-definition}
 A \emph{complete semi-Segal $n$-type} is semisimplicial type $(A_0, \ldots, A_{n+2})$, equipped with three properties:
 First, all $\Lambda^k_1$-horns ($k \geq 2$) have contractible filling. Second, \emph{completeness} is satisfied. Third, $A_1$ is a family of $(n-1)$-types.
\end{definition}

The remarkable feature of this definition is that each of the three properties is a proposition.
We present our main result as follows:

\begin{theorem}\label{thm:main-result}
 The type of \emph{complete semi-Segal $n$-types} is equivalent to the type of \emph{univalent $n$-categories}, for $n \in \{0,1,2\}$.
\end{theorem}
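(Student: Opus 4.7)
The plan is to chain two equivalences. By Theorem~\ref{thm:univalent-wellbehaved-equivalent}, the type of univalent $n$-categories is equivalent to the type of univalent semi-Segal $n$-types; it therefore suffices to prove that univalent semi-Segal $n$-types and complete semi-Segal $n$-types are equivalent. Both are $\Sigma$-types built over the same base --- an $(n+2)$-restricted semi-Segal type $A$ for which $A_1$ is a family of $(n-1)$-types --- so the equivalence reduces to comparing the extra data stacked on top of this common base.

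A complete semi-Segal $n$-type adds only the completeness condition, which is a proposition by Definition~\ref{def:restr-csst}. A univalent semi-Segal $n$-type instead adds an $(n+1)$-degeneracy structure together with univalence. By Corollary~\ref{cor:truncation-deg-unique}, the stated truncation level of $A_1$ is precisely what is needed to make the type of degeneracy structures into a proposition, and univalence is a proposition by definition. Hence on both sides the extra data is propositional, and it suffices to give a logical equivalence.

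For the forward direction (complete to univalent), Lemmata~\ref{lem:1-completeness-degeneracies}, \ref{lem:2-completeness-degeneracies}, and \ref{lem:3-completeness-degeneracies} construct an $(n+1)$-degeneracy structure from the completeness condition, one lemma per value of $n \in \{0,1,2\}$. Once a degeneracy structure is present, Lemma~\ref{lem:uni-equals-complete} converts completeness into univalence. For the backward direction, we forget the degeneracies and apply Lemma~\ref{lem:uni-equals-complete} in the opposite direction.

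The main obstacle is that Lemma~\ref{lem:uni-equals-complete} is stated only for 3- and 4-restricted semi-Segal types with degeneracies, covering $n \in \{1,2\}$ but not $n=0$. For the poset case we need the analogous equivalence between completeness and univalence for 2-restricted semi-Segal types in which $A_1$ is a family of propositions. Its proof follows the same pattern, but the auxiliary equivalence $\isIso \simeq \isneut$ (Lemma~\ref{lem:neutral-equivalence}) has to be re-established in this weaker setting; this is unproblematic because with $A_1$ propositional both $\isIso(f)$ and $\isneut(f)$ collapse to the existence of a morphism in the opposite direction, via the Yoneda-style argument of Lemma~\ref{lem:neutral-equivalence-yoneda} which only uses the Segal condition and not associativity. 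With this missing corner in place, the general argument outlined above applies uniformly across all three values of $n$.
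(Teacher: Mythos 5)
Your proof is correct and follows essentially the same route as the paper: reduce via Theorem~\ref{thm:univalent-wellbehaved-equivalent} to comparing univalent and complete semi-Segal $n$-types over the common truncated base, use Corollary~\ref{cor:truncation-deg-unique} together with Lemmata~\ref{lem:1-completeness-degeneracies}--\ref{lem:3-completeness-degeneracies} to make the degeneracy structure contractible, and use Lemma~\ref{lem:uni-equals-complete} to trade univalence for completeness. Your observation that Lemma~\ref{lem:uni-equals-complete} as stated covers only the $3$- and $4$-restricted cases is a point where you are more careful than the paper, which cites it uniformly for $n\in\{0,1,2\}$; your patch for $n=0$ via the propositionality of $A_1$ (or, alternatively, first promoting the preordered set to a precategory as in Theorem~\ref{thm:on-vertical-arrows-in-fig-1}) is sound.
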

\begin{proof}
 Theorem~\ref{thm:univalent-wellbehaved-equivalent} says that univalent $n$-categories are the same as univalent semi-Segal $n$-types.
 Univalent semi-Segal $n$-types are, by definition, $(n+2)$-restricted semi-Segal types with degeneracies, with a univalence condition, and the condition that $A_1$ is a family of $(n-1)$-types.
 By Lemma~\ref{lem:uni-equals-complete}, we can substitute the univalence condition by a completeness condition, without changing the type up to equivalence.
 Corollary~\ref{cor:truncation-deg-unique} guarantees that the type corresponding to the degeneracy structure is a proposition.
 By Lemmata~\ref{lem:1-completeness-degeneracies},~\ref{lem:2-completeness-degeneracies}, and~\ref{lem:3-completeness-degeneracies}, this type is inhabited, therefore contractible, hence we can remove the condition.
 What remains is exactly the type of complete semi-Segal $n$-types.
\end{proof}

Thanks to the theorem, we propose to employ this notion of complete semi-Segal $n$-type as the definition of a \emph{univalent $n$-category}.
Of course, it is not yet clear whether this particular model of $(n,1)$-category will turn out to be practically useful in the development of HoTT.
One problem with our approach is that, due to the well-known limitations about representing semi-simplicial types or any form of infinite tower of coherences internally in HoTT, we cannot state Definition~\ref{def:final-definition} for a variable $n : \N$.
If however we are confronted with a concrete problem for which we want to use higher categories, and we know a fixed $n_0$ such that univalent $n_0$-categories are sufficient, then we can express the definition internally in HoTT and, for example, formalise the argument in Agda~\citep{norell:towards}, Coq~\citep{coq}, or Lean~\citep{moura:lean}.

To make full use of this notion of univalent $n$-categories, their theory needs to be developed.
If one is happy to work in a stronger system extending HoTT such as HTS~\citep{voe:hts} or two-level type theory~\citep{alt-cap-kra:two-level}, the general case (including univalent $\infty$-categories) can be formalised and studied.
We have started to translate some results by \citet{lurie:higher-topoi} into our setting and at least in parts this works very nicely, but it is at the time of writing too early to further report on this.
A very preliminary demonstration is given by \citet{ann-cap-kra:two-level},
where the basic results about Reedy fibrant semisimplicial types were formalised by embedding two-level type theory in the proof assistant Lean.
Thanks to the conservativity results by \citet{paolo:thesis}, such results can at least for a fixed $n_0 < \infty$ be ``transported back'' to pure HoTT, although the details of this translation are still subject to ongoing research.
For all concrete constructions which we have performed, we found it easy to do this translation by hand.
For example, \citet{ann-cap-kra:two-level} show that the type universe is an $(\infty,1)$-category, and it is clear that the given construction can be used to 
see how the universe restricted to $(n-1)$-types is a univalent $n$-category; it can probably be regarded as the prototypical example.

Independently of this, a natural question seems to be whether the definition of $n$-categories using explicit composition and identity structure (as in Section~\ref{sec:composition-and-horns} and~\ref{sec:identity-and-degeneracy}) can be done for $n > 2$, and whether Theorem~\ref{thm:main-result} can be extended to this case.
For concrete and very low $n$ ($3$ and possibly $4$), one should with enough patience be able to write down the appropriate definitions and work out whether they are equivalent.
For the more general case however, the combinatorial aspects of higher associahedra require much more sophistication, and, to the best of the authors' knowledge, have not yet been worked out in a context that is general enough to be applicable to HoTT.
If one managed to find a representation of $n$-categories with explicit composition and identities (for any externally fixed $n$),
it seems plausible that a version of Theorem~\ref{thm:main-result} could be shown.
However this is in no way guaranteed,
since phenomena occurring at higher levels, like the fact that a tricategory is in general not equivalent to a strict $3$-category,
do not show up in the cases $n \leq 2$ which we have dealt with.

What we know is that we can always construct a degeneracy structure for a given complete semi-Segal $n$-type $A$.
We do not show this in the current paper (where the whole technical development is restricted to the case $n \leq 2$), but it follows with the help of the argument sketched by \citet{kraus-sattler:spacediagrams} 
(Theorem 5.1(2), where the index category $I$ is instantiated with a finite total order and $\mathsf{T}$ is replaced by $A$) 
and shows that Lemmata~\ref{lem:1-completeness-degeneracies}-\ref{lem:3-completeness-degeneracies} can be done for general $n$.
As soon as one has a degeneracy structure, it is easy to see that there is some flexibility in the formulation of Definition~\ref{def:final-definition}.
For emphasis, we formulate this as a remark:

\begin{remark}[equivalent definitions of complete semi-Segal $n$-types] \label{rem:flexible-main-definition}
 If $(A_0, \ldots, A_{n+2})$ is a semi-Segal type satisfying the completeness condition, and $A_{i+1}$ is a family of $k$-types, then $A_{i}$ is a family of $(k+1)$-types.
 As Remark~\ref{rem:level-is-trunc->higher-level-is-trunc} suggests, the reverse is true as long as $i \geq 1$.
 Thus, the truncation condition of Definition~\ref{def:final-definition} could equivalently be formulated by fixing any $i$ with $1 \leq i \leq n+2$ and saying that $A_i$ is a family of $(n-i)$-types.
 We have done this using $i \defeq 1$.
 The other canonical choice would have been $i \defeq n+2$.
 Note that stating the truncation condition only for $i \jdeq 0$ is insufficient, which seems to be a weakness of the suggestion by \citet{schreiber_nlab_inf1cat}.
 Alternative ways of phrasing the Segal condition have been discussed in Remark~\ref{rem:on-semiSegal-definition}; the version proposed by \citet{schreiber_nlab_inf1cat} is $\fulltetra{p} \to \spine{p}$ being an equivalence.
\end{remark}

Another interesting question is whether Corollary~\ref{cor:truncation-deg-unique} holds for $n > 2$, i.e.\ whether an appropriately truncated restricted (but not necessarily complete) semi-Segal type has at most one degeneracy structure.
We have not worked out a proof for this.
It is not even clear to us whether a \emph{complete} semi-Segal $n$-type necessarily has a contractible degeneracy structure for $n > 2$.
Although the mentioned argument of \citet{kraus-sattler:spacediagrams} does give us more than a degeneracy structure,
namely the contractibility of a certain type, this type is something like a ``double degeneracy structure'' rather than a degeneracy structure.

Moreover, note that our notion of degeneracy structure does not take the simplicial identity~\eqref{eq:simpli-id-s-s}, i.e.\ $s^k_i \circ s^{k+1}_j = s^k_{j+1}$, into account.
For the considered case of a complete semi-Segal $2$-type, Christian Sattler has pointed out to us that the first instance of this equation would be an equation in $A_2$ and thus a proposition.
Therefore, it is possible that the absence of~\eqref{eq:simpli-id-s-s} remained unnoticed for the cases we have considered but would play a role on later levels.
However, we do not think that this consideration is a problem for Definition~\ref{def:final-definition}, which we believe gives a well-behaved structure without requiring us to decide which notion of degeneracies is ``correct''.

To continue with the discussion of the omitted simplicial identity~\eqref{eq:simpli-id-s-s},
note that the very first instance of it would, when translated to the terminology of wild 2-precategories, give an equation $\lambda_{\Ids} = \rho_{\Ids}$. 
If we look at Remark~\ref{rem:two-triangles-too-much}, one might argue that such an equation does live at the ``same level'' as $t_0$ and $t_2$, see (\ref{eq:id-triangle-0},\ref{eq:id-triangle-2}), and should therefore have been included in Definition~\ref{def:degeneracy-structure}.
We think the difference is that, unlike (\ref{eq:id-triangle-0},\ref{eq:id-triangle-2}), the coherence $\lambda_{\Ids} = \rho_{\Ids}$ never has to be mentioned in the definition of higher categories (e.g.\ it is absent in the set-based definition of tricategories by \citet{nick:tricats})
since the one derivable from the triangle coherences is automatically coherent in some sense.

Even the restriction to $2$-categories that we have discussed in this paper already allows the formulation of many interesting examples of categorical structures in HoTT that were previously not obtainable, such as the category of univalent categories (not capturing all natural transformations), and (the ``homwise'' core of) the bicategory of spans of a finitely complete univalent category.
For $1$-categories, we note that completeness enables a slick representation of univalent categories, since we only need objects, sets of morphisms, composition, associativity, and completeness.
It is possible that this definition is convenient for the development of standard category theory in HoTT, but we have not investigated the idea.

The restriction to low dimensions also allowed us to produce a formalisation in the proof assistant Agda which is reasonably close to the informal text.  Our formalisation 
is based on the HoTT core library \texttt{agda-base} by the first named author, and 
covers all the equivalences presented in Figure~\ref{fig:low-dimensional-categories}.
This means that we have in particular formalised the equivalences of $n$-restricted semi-Segal types and wild $n$-semicategories ($n \in \{0,1,2\}$) in full, as well as the respective equivalence of degeneracy and identity structures.
The equivalences have been carefully constructed so that they ``compute'' in the expected way.  For example, the equality~\eqref{eq:jdgm-A2-comp} of Corollary~\ref{cor:composition} holds judgmentally.

Higher categories are clearly related to \emph{directed type theory}~\citep{LICATA2011263,nuyts:masterthesis}, 
where one considers theories that have types corresponding to $\infty$-categories rather than $\infty$-groupoids.
Recent work by \citet{rhiel-shulman:directed} 
considers categories (externally) that are equipped with a ``directed interval'', 
such as bisimplicial sets, and uses an enriched version of the language of HoTT 
to give a definition of \emph{Segal} and \emph{Rezk types}.
In comparison, we use ``standard'' HoTT and look at semi-Segal objects there, in the conventional sense.
One possible way to relate the two approaches would be to say that our
construction can be regarded as a way to give a semantics to the
theory by \citet{rhiel-shulman:directed} based on a model of ``standard'' HoTT, 
although this is currently a vague statement and a significant amount of work would be required to make it precise.

\subsection*{Acknowledgments}
We would like to thank Christian Sattler for countless insightful discussions on topics related to this work, and for helpful comments on an earlier draft.
We are also grateful to Thorsten Altenkirch for many inspiring conversations on type-theretic $\infty$-categories,
as well as to Andreas Nuyts for a discussion which motivated us to write this paper.
Further, we thank Gershom Bazerman and Ulrik Buchholtz for their comments on an earlier draft, as well as the anonymous referees for their throughout reviews that have helped us to improve clarity and motivation.
Finally, we thank Lars Birkedal who checked the final version of this article.

\bibliographystyle{plainnat}
\bibliography{master}

\end{document}